\documentclass[11pt,a4paper]{article}
\usepackage[T1]{fontenc}
\usepackage[utf8]{inputenc}
\usepackage{lmodern}
\usepackage[margin=1in]{geometry}
\usepackage{amsmath,amssymb,amsthm,mathtools}
\usepackage{longtable, color}
\usepackage{pdflscape}
\usepackage{tikz}
\usepackage{comment}
\usepackage{adjustbox}
\usetikzlibrary{arrows.meta,backgrounds}
\usepackage{microtype,enumitem,booktabs,needspace}
\usepackage[hidelinks]{hyperref}
\allowdisplaybreaks[2]
\setlist{leftmargin=*,itemsep=3pt,topsep=5pt}
\newcommand{\R}{\mathbb R}
\newcommand{\E}{\mathbb E}
\newcommand{\Prob}{\mathbb P}
\newcommand{\ind}{\mathbf1}
\newcommand{\op}{\mathrm{op}}
\newcommand{\dist}{\operatorname{dist}}
\newcommand{\diag}{\operatorname{diag}}
\newcommand{\LO}{\mathrm{LO}}
\newcommand{\ALO}{\mathrm{ALO}}
\newcommand{\cl}{\operatorname{cl}}
\newcommand{\vol}{\operatorname{vol}}

\newcommand{\supp}{\operatorname{supp}}

\newtheorem{theorem}{Theorem}[section]
\newtheorem{lemma}[theorem]{Lemma}
\newtheorem{example}{Example}
\newtheorem{proposition}[theorem]{Proposition}
\newtheorem{corollary}[theorem]{Corollary}
\theoremstyle{definition}
\newtheorem{assumption}[theorem]{Assumption}

\theoremstyle{remark}
\newtheorem{remark}{Remark}[section]
\numberwithin{equation}{section}
\newcommand{\wz}[1]{%
  {\color{blue!70!black}[WZ: #1]}%
}

\hypersetup{
  pdftitle={Approximate Leave-One-Out on the Active Set: Error Bounds When the Support Changes},
  pdfauthor={},
  pdfsubject={Finite-third-moment comparison of exact restricted and deleted fits; V3-based manuscript}
}
\title{On the Accuracy of Approximate Leave-One-Out with Blockwise Nonsmooth Regularization in High-dimensions}
\author{
Honghao Lin$^{1}\thanks{Authors are listed in alphabetical order.}$,
Songbin Liu$^{2}$,
Arian Maleki$^{1,2, 3}$,
Vahab Mirrokni$^{1}$,
Chen Shang$^{3}$,
\\[0.4em]
David P. Woodruff$^{1,4}$,
Hanbo Yang$^{3}$,
Wenjin Zhang$^{3}$
}
\date{October 2026}

\begin{document}
\maketitle
\footnotetext[1]{Google.}
\footnotetext[2]{Department of Electrical Engineering, Columbia University.}
\footnotetext[3]{Department of Statistics, Columbia University.}
\footnotetext[4]{Computer Science Department, Carnegie Mellon University.}

\begin{abstract}
Approximate leave-one-out (ALO) refers to methods that provide computationally efficient and statistically accurate approximations to leave-one-out risk estimation. These methods have recently attracted increased attention because of their connections to other important problems, including machine unlearning and data valuation. For empirical risk minimization, ALO methods often rely on a few Newton steps and therefore require smooth loss functions and regularizers. In high-dimensional problems, however, regularizers are frequently non-smooth. This has motivated the development of heuristic ALO methods for non-smooth regularization, whose empirical accuracy is often strong but whose theoretical justification remains limited.

In this paper, we study the accuracy of a heuristic ALO method for a broad class of non-differentiable penalties that promote block sparsity in the proportional high-dimensional regime. Assuming uniformly bounded block sizes, we prove that the expected average absolute discrepancy between the exact leave-one-out loss and its ALO approximation is $O(\log^{3/2}(en)/\sqrt{n})$. Our proof technique differs substantially from existing approaches and relies on obtaining sharp bounds on the number of active groups whose status changes between the full-data and leave-one-out estimators. In addition to yielding stronger guarantees under weaker assumptions than prior work, our analysis provides new insight into the stability of sparse estimators under perturbations of the training data.

\end{abstract}

\noindent\textbf{Keywords:} approximate leave-one-out; active-set changes; block penalties; finite moments; proximal maps; anti-concentration.

\clearpage
\begingroup
\small
\tableofcontents
\endgroup
\clearpage

\section{Introduction}\label{sec:intro}



\subsection{Main challenge and summary of contributions}
Reliable estimation of out-of-sample prediction error is central to model assessment, model selection, and hyperparameter tuning. Leave-one-out cross-validation (LO) assesses predictive performance by fitting the model to all but one observation and evaluating it on the omitted observation, repeating this procedure across the sample.
LO has long been used for predictive assessment and model choice \cite{allen1974relationship,stone1974cross, geisser1975predictive}, and its statistical accuracy has been studied in both low-dimensional \cite{li1987asymptotic, andrews1991asymptotic, shao1993linear, aghbalou2024sharp, austern2025asymptotics} and high-dimensional settings \cite{rad2020error,zou2025theoretical, patil2021uniform}.

When the estimator is obtained through empirical risk minimization, direct computation of the leave-one-out (LO) estimate requires solving many closely related optimization problems, one for each observation removed from the dataset, and can therefore be computationally prohibitive. Approximate leave-one-out (ALO) methods aim to reduce this cost by using the full-data estimator to approximate the effect of deleting each observation, thereby avoiding the need to refit the model many times. These methods have attracted increasing attention in recent years, not only as computationally efficient tools for risk estimation and model assessment, but also because of their close connections to several emerging problems, including machine unlearning \cite{guo2019certified, zou2025certified, pandey2026gaussian} and data valuation problems \cite{koh2017understanding, zou2025newfluence, rubinstein2026rescaled, ilyas2025magic}.

For smooth objectives, ALO methods typically use one or a few Newton steps \cite{rad2020scalable, zou2025certified} to approximate the leave-one-out estimates. By exploiting the close relationship among the resulting linear systems, simple linear algebraic identities can then be used to avoid refitting the model for each observation, leading to an efficient approximation of the leave-one-out risk. The accuracy of these methods has been studied extensively over the past decade in both low- and high-dimensional settings \cite{beirami2017optimal,giordano2019swiss,rad2020scalable, wilson2020approximate, ghosh2020approximate, austin2025distributional}.

Despite this progress, the non-differentiable setting is arguably more relevant in many modern high-dimensional problems, where sparsity-inducing penalties such as the LASSO, group LASSO, and related structured regularizers are widely used \cite{tibshirani1996regression,yuan2006model,
zou2005regularization}. In these settings, the lack of smoothness prevents a direct application of the standard Newton-based ALO arguments. Although several heuristic approximations have been proposed and shown empirically to perform well \cite{wang2018approximate, beirami2017optimal}, their theoretical accuracy is much less understood. This leaves an important gap between the practical use of ALO with non-smooth regularizers and the available theory.

In this paper, we theoretically study the accuracy of a heuristic ALO method, proposed in \cite{wang2018approximate} for a broad class of non-differentiable penalties that promote block sparsity. The key heuristic underlying these methods is that the sparsity pattern of the full-data estimator remains unchanged after removing a single observation. Under this assumption, the leave-one-out optimization problem can be restricted to the active groups of the full-data solution, yielding a smooth reduced problem to which Newton-based approximations can be applied efficiently. Empirically, these methods have been shown to provide highly accurate approximations to LO \cite{wang2018approximate, auddy2024approximate}. At the same time, numerical studies indicate that the underlying sparsity-pattern assumption is generally not exactly valid: the active sets of the full-data and leave-one-out estimators can differ. Thus, the strong empirical performance of these methods is somewhat surprising and raises a fundamental theoretical question: 
\begin{itemize}

\item Why can an approximation based on an incorrect active-set assumption still accurately estimate the leave-one-out risk?

\item How much error do these inaccurate assumptions introduce into the resulting approximations?

\end{itemize}

In this paper, we study the theoretical accuracy of heuristic ALO methods in the proportional high-dimensional regime, where the number of samples $n$ is at the same order as the number of features $p$. At a high level, our main contributions can be summarized as follows. A more detailed discussion of the contributions is presented in Section \ref{sec:setting}.

\begin{itemize}

\item We show that
\[
\E\big[|{\rm LO}-{\rm ALO}|\big]\leq C\frac{\log^{3/2}(en)}{\sqrt{n}}.
\]
Recall that LO itself is an estimator of the out-of-sample prediction error. Since it is based on \(n\) observations, its estimation error is generally of order \(O(1/\sqrt{n})\) without additional structural assumptions. Thus, up to the logarithmic factor, the error introduced by replacing LO with ALO is of the same order as the intrinsic statistical error of LO in estimating the out-of-sample prediction risk. In this sense, ALO achieves nearly the same statistical accuracy as LO while avoiding its substantial computational cost.

\item We establish an $O_p\!\left(\sqrt{n}\log^{3/2}(n)\right)$ bound on the discrepancy between the active sets of the full-data and leave-one-out estimators. This substantially improves upon the previously known bound of $O_p\!\left(n^{11/12}\log^{1/4}(n)\right)$ obtained in \cite{auddy2024approximate}. Beyond their role in establishing the accuracy of ALO, such active-set stability bounds may also be useful for analyzing ensemble and resampling-based procedures, including bagging \cite{breiman1996bagging} and stability selection \cite{meinshausen2010stability}, where sensitivity of the selected variables or groups to perturbations of the training data is a central issue.

\end{itemize}

Although our primary focus is on the accuracy of ALO as an approximation to LO, as we discussed before, the techniques developed here for controlling the discrepancy between the full-data and leave-one-out solutions can be adapted to study the accuracy of fast approximation methods in machine unlearning and data valuation. Thus, beyond their immediate implications for ALO, our results provide a broader framework for analyzing data-deletion and sample (group)-influence problems.

\subsection{Related work}
\label{sec:related}

Developing ALO methods for non-smooth losses and regularizers has been an active area of research over the past decade. Some of the earliest approaches in high-dimensional settings, used the approximate message passing (AMP) framework \cite{donoho2009message} to derive efficient leave-one-out approximations, including for the LASSO problem \cite{obuchi2016cross, mousavi2018consistent}. However, the theoretical guarantees for AMP-based methods typically rely on restrictive assumptions, such as independence of features, which may not hold in many practical settings. Moreover, extending these techniques beyond specific losses and regularizers is often nontrivial.

These limitations motivated the development of alternative approaches for deriving ALO formulas in non-smooth settings \cite{rad2020scalable, beirami2017optimal, wang2018approximate}. In particular, \cite{wang2018approximate} proposed several heuristic approximations based on primal, dual, and proximal representations of the estimator. These methods substantially broaden the class of models for which ALO can be computed efficiently. Despite their strong empirical performance, however, the theoretical accuracy of these approximations remains poorly understood.

Among the existing works, \cite{auddy2024approximate,bellec2025simultaneous} are the two most closely related to ours, as both study the theoretical accuracy of ALO in proportional high-dimensional regimes and allow for non-differentiable regularizers. In particular, \cite{auddy2024approximate} derives high-probability bounds for computational ALO under $\ell_1$ regularization, with the approximation error controlled by the number of coordinates whose active-set status may change after removing an observation. For squared-loss elastic-net regression, their explicit control of this instability relies on Gaussian noise, isotropic Gaussian designs, and additional regularization conditions. Combining their main results, and taking the regularization parameters to be fixed positive constants with $p\asymp n$, yields an ALO--LO error bound of order $n^{-1/24}$, up to logarithmic factors. In our results substantially sharpen this rate, improving the polynomial exponent from $1/24$ to $1/2$, up to logarithmic factors, while also providing an unconditional bound on the expected absolute ALO--LO error.

More importantly, our results apply to
regression settings beyond the explicit
guarantees established in that work.
In particular, we permit correlation among the features, deterministic weighted penalties,
and non-Gaussian responses with only a finite
third moment of the null score. The Gaussian
noise assumption used to establish the
explicit active-set perturbation bound of
Auddy et al. is not required in our analysis.

Bellec~\cite{bellec2025simultaneous} develops
another approach to nonsmooth ALO through
its connection with mean-field inference.
This approach yields near-root-$n$
high-probability ALO--LO guarantees for
nonsmooth regularizers, including elastic
net and group lasso. However, there are a few major differences between this work and the result of ~\cite{bellec2025simultaneous}:

\begin{enumerate}
\item Our proof strategy differs substantially from that of \cite{bellec2025simultaneous}. We develop a quantitative analysis of active-set instability using blockwise proximal geometry together with conditional anti-concentration. This allows us to bound the number of coordinates or groups whose activity status can change after removing a single observation. We then quantify how these changes propagate into the ALO approximation error. Beyond the specific class of block-separable penalties considered here, this proof strategy may provide a useful framework for studying other non-smooth regularizers, including spectral penalties such as the nuclear norm. Furthermore, such results  may be useful for analyzing ensemble and resampling-based procedures.

\item The ALO--LO guarantees for robust regression and single-index models in Corollaries 4.1 and 5.1 of \cite{bellec2025simultaneous} rely on Lipschitz conditions on the loss function and its derivatives. Although that work also considers squared loss separately, the corresponding leverage-concentration result in Proposition 2.1 assumes Gaussian regression noise. In contrast, the framework developed in this paper accommodates a broader class of loss functions and substantially more general noise distributions and tail behavior, thereby extending the theoretical scope of ALO analysis beyond these settings.

\end{enumerate}

\subsection{Notation}\label{sec:intro-notation}

For a vector $x\in\R^p$, let $\|x\|$ denote its Euclidean norm. For a matrix $X\in\R^{n\times p}$, let $\|X\|_{\op}$ and $\|X\|_F$ denote its operator and Frobenius norms, respectively. We write the $j$th row of $X$ as $x_j^\top$.

For positive deterministic sequences, $a_n=O(b_n)$ means that $a_n/b_n$ is bounded, and $a_n\asymp b_n$ means that this ratio is bounded above and away from zero. We write $Z_n=O_p(a_n)$ when $Z_n/a_n$ is bounded in probability.

For a finite set $I$, $|I|$ denotes its cardinality. The order $A\preceq A'$ between symmetric matrices is the Loewner order, and $\mathbf{1}_E$ denotes the indicator of an event $E$.


\subsection{Organization}\label{sec:organization}

The rest of the paper is organized as follows.
Section~\ref{sec:setting} introduces the model, assumptions, and main results.
Section~\ref{sec:proof-of-thm2.5} outlines the main steps in the proof of Theorem~\ref{thm:main}, one of the principal results of the paper.
Section~\ref{sec:proof:thm2.8} establishes a stronger version of Theorem~\ref{thm:main} under stronger tail assumptions on the scores.
Section~\ref{sec:discussion} discusses the assumptions that are essential to the arguments developed in the paper and highlights some limitations of the results.
Detailed proofs of all results are deferred to the appendices.

\section{Model, assumptions, and main results}\label{sec:setting}

\subsection{Data generation model and regularizer}\label{sec:model}

 Consider a setting in which we observe independent and identically distributed data points
\[
(y_j,x_j)\sim q(y_j\mid x_j^\top \beta^*)\,p(x_j), \qquad j\in\{1,\ldots,n\},
\]
where \(y_j\in\mathbb{R}\) denotes the response, \(x_j\in\mathbb{R}^p\) denotes the feature vector, and \(\beta^* \in \mathbb{R}^p\) denotes the true model parameter. A widely used approach for estimating \(\beta^*\) from data is the regularized empirical risk minimization (R-ERM), which obtains an estimate of \(\beta^*\) by minimizing an objective of the following form:
\begin{equation}\label{eq:objectives}
 \hat{\beta} = \arg\min_\beta F(\beta)= \arg\min_{\beta} L(\beta)+\lambda_1 r(\beta)+\frac\mu2\|\beta\|^2.
\end{equation}
In this equation, $L(\beta)=\sum_{j=1}^n \ell(y_j;x_j^\top\beta),$
where \(\ell(y_j;x_j^\top\beta)\) denotes the loss function, while
\[
\lambda_1 r(\beta)+\frac{\mu}{2}\|\beta\|^2
\]
serves as the regularizer. The hyperparameters \(\lambda_1\) and \(\mu\) are typically tuned to achieve the best predictive performance on unseen data. A popular approach for estimating the predictive performance on unseen data is the so-called leave one out cross validation (LOOCV).  LOOCV is computed by repeatedly leaving out one observation, fitting the model using the remaining \(n-1\) samples, evaluating the fitted model on the held-out observation, and then averaging the resulting prediction losses over all \(n\) observations. More specifically, 
the leave-one-out risk estimate is defined as
\begin{equation}\label{eq:LO}
 \LO=\frac1n\sum_{i=1}^n\ell(y_i;x_i^\top\hat\beta^{(i)}),
\end{equation}
where
\begin{equation}\label{eq:LOO_prob}
\hat\beta^{(i)} =  \arg\min_\beta  F_i(\beta)= \arg\min_\beta \underbrace{\sum_{j\neq i}\ell(y_j;x_j^\top\beta)}_{L_i(\beta)}+\lambda_1 r(\beta)+\frac\mu2\|\beta\|^2
 \end{equation}
One of the main challenges associated with LOOCV is its computational cost, since the model must be refit \(n\) times, once for each held-out observation. This has motivated substantial effort toward developing computationally efficient yet statistically accurate approximations to LOOCV. As discussed in the previous section, our focus is on understanding the accuracy of such approximations when the regularizer is non-differentiable. Throughout the paper, we consider the following broad class of non-differentiable regularizers, which includes many commonly used penalties in high-dimensional statistics.

\begin{assumption}[Block penalty]\label{ass:blocks}
The coordinates of $\R^p$ are partitioned into blocks $g=1,\dots,G$ of sizes $1\le d_g\le d_0$, and we write $\beta=(\beta_g)_{g=1}^G$ with $\beta_g\in\R^{d_g}$. The partition does not depend on the data. The penalty is $r(\beta)=\sum_{g=1}^Gr_g(\beta_g)$, where each $r_g:\R^{d_g}\to[0,\infty)$ is convex, satisfies $r_g(0)=0$, is three times continuously differentiable on $\R^{d_g}\setminus\{0\}$, and satisfies
\[
 B(0,\rho_-)\subseteq\partial r_g(0)\subseteq B(0,\rho_+)
\]
for constants $0<\rho_-\le\rho_+$, where $\partial r_g(0)$ denotes the subdifferential of $r_g(\cdot)$ at zero, and $B(0,r)$ denotes the closed ball centered at zero with radius $r$ in the corresponding Euclidean space. Let
\begin{equation}\label{eq:prox-def}
 K_g=\lambda_1\,\partial r_g(0),\qquad
 \Phi_g=(\mu I+\lambda_1\,\partial r_g)^{-1}.
\end{equation}
The map $\Phi_g$ is twice continuously differentiable
on $\R^{d_g}\setminus K_g$, and
\begin{equation}
    \sup_{g\le G}\ \sup_{v\notin K_g}\|D^2\Phi_g(v)\|\le C_\Phi.
    \label{eq:bound_on_d2_phi_g}
\end{equation}
Here, $D^2\Phi_g(v):\R^{d_g}\times\R^{d_g}\to\R^{d_g}$
is viewed as a bilinear map, with its norm defined as
\begin{equation*}
\|D^2\Phi_g(v)\|
=
\sup_{\substack{h,h'\in\R^{d_g}\\ \|h\|=\|h'\|=1}}
\bigl\|D^2\Phi_g(v)[h,h']\bigr\|,
\end{equation*}
where all vector norms are Euclidean.
The functions $r_g$ may depend on $n$ and $p$, provided these conditions hold with the same constants.
\end{assumption}

Assumption \ref{ass:blocks} covers several regularizers that are widely used in high-dimensional statistics, including the weighted LASSO, group LASSO, and blockwise elastic-net penalties; this is formalized in Lemma~\ref{lem:prox}. As a first step, in the next section we review heuristic approaches for deriving ALO formulas in the presence of non-smooth regularizers. Our discussion is motivated in part by the framework developed in \cite{wang2018approximate}.

\subsection{Approximate Leave One Out (ALO) }\label{sec:restricted}

In this section, we show how heuristic arguments can be used to derive an approximation to LO. When both the loss function and the regularizer are differentiable, one can use $\hat{\beta}$, the solution to the full-data problem, as a warm start for Newton's method applied to \eqref{eq:LOO_prob} . Since $\hat{\beta}$ is expected to be close to $\hat{\beta}^{(i)}$, a single Newton step starting from $\hat{\beta}$ can provide a computationally efficient approximation to $\hat{\beta}^{(i)}$ \cite{rad2020scalable}. The accuracy of this one-step Newton approximation has been theoretically established in both low-dimensional and high-dimensional settings under suitable smoothness assumptions on the loss function and the regularizer \cite{rad2020scalable}. However, this approach is not directly applicable when either the loss function or the regularizer is non-smooth. Hence, inspired by \cite{wang2018approximate} we suggest an extra approximation for the regularizers we considered in Assumption \ref{ass:blocks}.  

For a vector $\beta$, let $S(\beta)=\{g:\beta_g\ne0\}$. We can consider these groups as ``active groups''. Define the full-data product stratum
\begin{equation}\label{eq:expanded-display-006}
\mathcal M^\circ(b)=\prod_g M_g(b_g),
\end{equation}
where for each block $g$ and every $u\in\mathbb R^{d_g}$, $M_g(u)$ is defined as
\begin{equation*}
    M_g(u)=
\begin{cases}
\{0\}, & u=0,\\
(0,\infty), & d_g=1,\ u>0,\\
(-\infty,0), & d_g=1,\ u<0,\\
\mathbb R^{d_g}\setminus\{0\}, & d_g\ge2,\ u\ne0.
\end{cases}
\end{equation*}
and $\prod_g$ denotes a Cartesian product of sets. As discussed in Assumption \ref{ass:blocks}, the main challenge in applying approximation methods that rely on the smoothness of the loss function and the regularizer arises from the non-differentiability of the regularizer at groups for which $\hat{\beta}_g=0$. A natural heuristic is therefore to assume that the active groups of $\hat{\beta}$ and $\hat{\beta}^{(i)}$ remain unchanged. Under this assumption, one can replace \eqref{eq:LOO_prob} with a simpler optimization problem restricted to an appropriate active region. To describe these restricted problems more clearly, define
\begin{equation}\label{eq:domains}
 A(b)=\{\beta:\beta_g=0\text{ for }g\notin S(b)\},
 \qquad C(b)=\cl\mathcal M^\circ(b),
\end{equation}
where $\cl$ denotes the closure operation. Both $A(b)$ and $C(b)$ are closed convex sets containing $0$ and $b$. For $D=A$ or $D=C$, define
\begin{equation}\label{eq:restricted}
 m_i^D=\arg\min_{\beta\in D(\hat{\beta})}F_i(\beta).
\end{equation}
Note that both of these restricted optimization problems are different from the original leave-one-out problem \eqref{eq:LOO_prob}, whose solution is required for computing the exact leave-one-out risk estimate. However, when $F_i$ is restricted to $C(\hat{\beta})$, the objective is differentiable on the relevant active region. This makes it possible to apply a one-step Newton approximation to obtain a computationally efficient approximation to the solution of the restricted problem.

\begin{remark}
To obtain the final ALO formula for \eqref{eq:objectives}, one would further approximate the solution of \eqref{eq:restricted} by taking a single Newton step initialized at $\hat{\beta}$  \cite{wang2018approximate}. Since the analysis of this one-step Newton approximation is essentially the same as that developed in \cite{rad2020scalable}, we do not repeat it here and instead focus on the additional error introduced by restricting the leave-one-out problem.
\end{remark}

The discrepancy between problems \eqref{eq:LOO_prob} and \eqref{eq:restricted} raises the following questions: 
\begin{itemize}

\item[] \textbf{Q.1}: How close is the solution of \eqref{eq:restricted} to $\hat{\beta}^{(i)}$? Intuitively, when \(n\) is large, removing a single observation should have only a limited effect on the fitted model, suggesting that the set of active groups may remain unchanged, i.e.,
\[
S(\hat{\beta})=S(\hat{\beta}^{(i)}).
\]
If this active-set stability were to hold, with scalar signs also preserved, one would expect the restricted solution \(m_i^D\) to coincide with the exact leave-one-out estimator $\hat{\beta}^{(i)}$.

\item[] \textbf{Q.2}: If we use these two new approximations for obtaining the following ALO:
\begin{equation}\label{eq:ALO}
  \widetilde\LO^D=\frac1n\sum_i\ell(y_i;x_i^\top m_i^D),
\end{equation}
then how large can $|\LO-\widetilde\LO^D|$ be? 

\end{itemize}

To address these questions we would like to consider the high-dimensional analysis setting that we lay out carefully in the next section.

\subsection{High-dimensional analysis framework and other assumptions}
Following a  long line of work in the field of high-dimensional statistics, in this paper we adopt the ``proportional asymptotic regime" for studying \textbf{Q.1} and \textbf{Q.2}. The main assumption in the proportional asymptotic regime is the following:

\begin{assumption}[proportional asymptotic setting]\label{ass:design}
There are fixed positive constants $c_\Sigma,C_\Sigma,\delta_-,\delta_+,B$ such that
\begin{equation}\label{eq:expanded-display-002}
x_j\sim N(0,\Sigma),\qquad
 \frac{c_\Sigma}{p}I\preceq\Sigma\preceq\frac{C_\Sigma}{p}I,
 \qquad \delta_-\le n/p\le\delta_+,
 \qquad \|\beta^*\|^2\le B^2p.
\end{equation}
Note that in this assumption we let both $p$ and $n$ be large, while the ratio of the two can not go to infinity or zero. 
\end{assumption}

Our next assumption is concerned with the hyperparameters of the model. 

\begin{assumption}\label{ass:hperparam}
The hyperparameters $\lambda_1,\mu$ are deterministic and lie in fixed compact subintervals of $(0,\infty)$.
\end{assumption}

As discussed in \cite{zou2025theoretical}, this is not a restrictive assumption, since the optimal choice of these hyperparameters under the proportional asymptotic setting, are neither zero nor infinity, except in a few pathological settings, e.g. linear regression with no noise, or infinite additive noise.

In this paper, our focus is exclusively on the challenges arising from non-smooth regularization. To isolate these effects and simplify the analysis, we assume throughout that the loss function is smooth.

\begin{assumption}[Smooth loss and finite third score moment]\label{ass:loss}
The loss function $\ell(\cdot;\cdot)$ is finite and jointly measurable. Primes denote derivatives with respect to its second argument, i.e.,
\begin{equation*}
\ell'(y;z):=\frac{\partial}{\partial z}\ell(y;z),\qquad\ell''(y;z):=\frac{\partial^2}{\partial z^2}\ell(y;z).
\end{equation*}
For every response $y$ in its support, $\ell(y;\cdot)\in C^2(\R)$ is convex, and satisfies
\begin{equation}\label{eq:expanded-display-003}
0\le\ell''(y;z)\le L_2,\qquad
 |\ell''(y;z)-\ell''(y;z')|\le L_3|z-z'|.
\end{equation}
For the null scores $\eta_j=\ell'(y_j;0)$, fixed constants $K_3,v_->0$ satisfy
\begin{equation}\label{eq:thirdmoment}
 \E|\eta_j|^3\le K_3,
 \qquad \E\eta_j^2\ge v_-.
\end{equation}
We write $\eta=(\eta_1,\ldots,\eta_n)^\top$ and set $v_+=K_3^{2/3}$, so $\E\eta_j^2\le v_+$. 
\end{assumption}

\begin{assumption}[Likelihood representation and one-sided curvature]\label{ass:likelihood}
The conditional law of response $y$ given $u$ has a common support $\mathcal Y\subseteq\R$, and a fixed sigma-finite dominating measure. Their density or mass function $q(y\mid u)$ is jointly measurable. For almost every $y$, $q(y\mid u)$ is positive for every $u$ and $C^2$ in $u$, with
\begin{equation}\label{eq:likelihood}
 \partial_u^2\log q(y\mid u)\ge-C_q.
\end{equation}
The dominating measure may be counting measure. Note that we have allowed the loss to differ from the negative log-likelihood.
\end{assumption}

Many loss functions $\ell$ and conditional distributions $q(y | u)$ satisfy the above assumptions. Below we mention a couple of such examples.

\begin{example}
    For $y\in\{0,1\}$, take
    \[
     q(y\mid u)=\frac{e^{yu}}{1+e^u},\qquad
     \ell(y;z)=\log(1+e^z)-yz.
    \]
    With $\pi(z)=e^z/(1+e^z)$, direct differentiation gives
    \[
     \ell'(y;z)=\pi(z)-y,\quad
     \ell''(y;z)=\pi(z)(1-\pi(z))\le\tfrac14,\quad
     \partial_u^2\log q(y\mid u)=-\pi(u)(1-\pi(u))\ge-\tfrac14.
    \]
    The derivative of $\ell''$ has absolute value at most $1/4$, so its Lipschitz condition holds. The null score has $|\eta_i|=1/2$ and $\E\eta_i^2=1/4$. 
\end{example}

\begin{example}
Let $y_i=x_i^\top\beta^*+e_i$, where the errors are independent and identically distributed, are independent of the covariates, and have density proportional to $(1+e^2/\nu)^{-(\nu+1)/2}$ for a fixed $3<\nu<4$. Note that the error has much heavier tail than the usual Gaussian errors. Suppose that we still use squared loss $\ell(y;z)=(y-z)^2/2$ for estimating $\beta^*$. The null score is $\eta_i=-y_i$. The error density has tails of order $|e|^{-\nu-1}$, so the third absolute moment is finite whereas the fourth moment is infinite. The Gaussian signal has uniformly bounded variance under Assumption~\ref{ass:design}; consequently $\E|\eta_i|^3$ is uniformly bounded, and the independent nondegenerate error gives the required positive second-moment lower bound. Direct differentiation of the location likelihood yields
    \begin{equation}\label{eq:example-heavy-curvature}
     \partial_u^2\log q(y\mid u)
     =\frac{(\nu+1)((y-u)^2-\nu)}{(\nu+(y-u)^2)^2}
     \ge-\frac{\nu+1}{\nu}.
    \end{equation}
    The curvature is positive when $|y-u|>\sqrt\nu$, so the likelihood is not log-concave in $u$. Nevertheless it satisfies the one-sided condition, while squared loss has $L_2=1$ and $L_3=0$. 
    \end{example}

\subsection{Theoretical contributions}

\subsubsection{Main results}
\label{sec:main results}
In this section, we theoretically address Q.1 and Q.2 raised in Section \ref{sec:restricted}. For notational simplicity we replace $\hat{\beta}$ and $\hat\beta^{(i)}$ with $b$ and $b_i$ respectively. Let $P$ and $P_i$ be the block-coordinate orthogonal projectors onto $S(b)$ and $S(b_i)$. Locally at these fits they are the tangent projectors of the corresponding product strata. Define
\begin{equation}\label{eq:scales}
 h_n=\sqrt{\frac{\log(en)}n},\qquad
 \mathfrak r_n=\frac{\log^{3/2}(en)}{\sqrt n}.
\end{equation}
The following theorem presents the first theoretical result of this paper.
\begin{theorem}\label{thm:main}
Under Assumptions~\ref{ass:blocks}--  \ref{ass:likelihood}, there is a constant $C$ such that, for all sufficiently large $n,p$, for $D=A$ and $D=C$,
\begin{align}
 \sup_{i\le n}\E\big|\ell(y_i;x_i^\top b_i)-\ell(y_i;x_i^\top m_i^D)\big|
 &\le C\mathfrak r_n,\label{eq:mainloss}\\
 \sup_{i\le n}\E|x_i^\top\Delta_i^D|
 +\sup_{i\le n}\E\|\Delta_i^D\|^2
 &\le C\mathfrak r_n,\label{eq:mainprediction}\\
 \sup_{i\le n}\E\left[\frac{\|P-P_i\|_F^2}{p}\right]
 &\le Ch_n,\label{eq:mainprojector}
\end{align}
where $\Delta_i^D = b_i - m_i^D$. Consequently,
\begin{equation}\label{eq:mainrisk}
 \E|\LO-\widetilde\LO^D|
 \le\E\left[\frac1n\sum_i
 \big|\ell(y_i;x_i^\top b_i)-\ell(y_i;x_i^\top m_i^D)\big|\right]
 \le C\mathfrak r_n.
\end{equation}
Thus 
\begin{equation*}
\frac1n\sum_{i=1}^n\left|\ell(y_i;x_i^\top b_i)-\ell(y_i;x_i^\top m_i^D)\right|=O_p(\mathfrak r_n).
\end{equation*}
The projector bound \eqref{eq:mainprojector} also holds for the complementary normal projectors, i.e.,
\begin{equation*}
\sup_i\mathbb E\left[\frac{\|(I_p-P)-(I_p-P_i)\|_F^2}{p}\right]\le Ch_n.
\end{equation*}
\end{theorem}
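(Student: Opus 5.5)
The plan is to reduce everything to a count of the blocks whose activity status differs between $b$ and $b_i$, and to bound that count by proximal geometry plus anti-concentration. Note first that $\|P-P_i\|_F^2=\sum_g d_g\,\mathbf 1\{g\in S(b)\triangle S(b_i)\}$. So \eqref{eq:mainprojector} asks for $\E|S(b)\triangle S(b_i)|\le C p h_n$, and the complementary statement follows at once because $(I-P)-(I-P_i)=P_i-P$.

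\textbf{Fixed-point step.} The stationarity conditions give $b_g=\Phi_g(v_g)$ with $v=-X^\top\ell'(y;Xb)$, and similarly $b_{i,g}=\Phi_g(v^{(i)}_g)$ with $v^{(i)}=-X_{-i}^\top\ell'(y_{-i};X_{-i}b_i)$. A block switches status only if one of $v_g,v^{(i)}_g$ lies in $K_g$ while the other lies outside. I will bound:
\begin{enumerate}
\item the perturbation $\|v-v^{(i)}\|$, via strong convexity ($\mu>0$), which gives $\|b-b_i\|\lesssim |\ell'(y_i;x_i^\top b)|\,\|x_i\|$;
\item the number of blocks with $\dist(v_g,\partial K_g)\le t$.
\end{enumerate}
For a threshold $t$, a block that changes status either has $\dist(v_g,\partial K_g)\le t$ or has $\|v_g-v^{(i)}_g\|>t$. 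The second kind number at most $\|v-v^{(i)}\|^2/t^2$. For the first kind I use conditional anti-concentration. Conditionally on the data, through the Gaussian design, each $v_g$ contains a Gaussian component with variance of order $1/p$ in every direction. This uses the lower bounds $c_\Sigma$ and $v_-$, the finite third moment to run a Berry--Esseen step for the score average, and Assumption~\ref{ass:likelihood} to transfer Gaussianity in $x$ past the dependence of $y$ on $x^\top\beta^*$ by a change-of-measure/integration-by-parts argument. The expected number of blocks within distance $t$ of $\partial K_g$ is then $\lesssim G\,t\sqrt p$, up to the Berry--Esseen error $p^{-1/2}$.

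\textbf{Choosing $t$.} Optimizing $t$ against $\|v-v^{(i)}\|^2=O(\log n/p)$ after a truncation on $\|x_i\|$ and $|\eta_i|$ gives $\E|S\triangle S_i|\lesssim p\sqrt{\log(en)/n}$, which is \eqref{eq:mainprojector}. The logarithm enters from union and truncation over $n$ leave-outs and from the maximum of the scores.

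\textbf{Comparing with the restricted fit.} For \eqref{eq:mainprediction}, compare $b_i$ with $m_i^D$. Both minimize the $\mu$-strongly convex $F_i$, the first over $\R^p$ and the second over $D(b)$. By strong convexity and the variational inequality,
\[
\mu\|\Delta_i^D\|^2\le \langle\nabla\text{-residual of } b_i \text{ projected off } D(b)\rangle ,
\]
which involves only the blocks in $S(b_i)\setminus S(b)$, together with sign flips for $D=C$. Those block values have size $\|b_{i,g}\|\lesssim\|v^{(i)}_g-v_g\|+\dist(v_g,K_g)$-type quantities near the boundary. Summing over at most $p h_n$ blocks, with Lipschitz continuity of $\Phi_g$ and the $C_\Phi$ bound \eqref{eq:bound_on_d2_phi_g} for second-order control, gives $\E\|\Delta_i^D\|^2\lesssim \mathfrak r_n$.

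\textbf{The term $x_i^\top\Delta_i^D$.} Here I use that $\Delta_i^D$ depends on the data only through $(y_j,x_j)_{j\ne i}$ and $b$. Replacing $b$ with $b_i$ costs $O(\|b-b_i\|)$ weighted by the status-change count. The remaining part is independent of $x_i$, so $\E|x_i^\top\Delta|\lesssim\sqrt{\E\|\Delta\|^2/p}\cdot\sqrt p$ with the $\Sigma\preceq C_\Sigma/p$ scaling. The coupling with $b$ is the delicate part, and I expect to handle it by a leave-two-out decoupling.

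\textbf{Losses and the main obstacle.} Finally, \eqref{eq:mainloss} follows from the mean value theorem, $|\ell(y;a)-\ell(y;c)|\le(|\ell'(y;0)|+L_2(|a|+|c|))|a-c|$, with Hölder's inequality using the third moment of $\eta_i$ and moment bounds on $x_i^\top b_i$. Averaging over $i$ gives \eqref{eq:mainrisk}, and Markov's inequality gives the $O_p$ statement. The main obstacle is the anti-concentration step: showing that the block statistics $v_g$ have bounded density near $\partial K_g$ uniformly, despite their dependence on the whole fit. I would attack it with Gaussian conditioning on the projection of $X$ onto directions orthogonal to the fit (a Gordon/Stein-style decomposition).
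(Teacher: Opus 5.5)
There is a genuine gap, and it sits exactly where the rate comes from.

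\textbf{The dual-movement and counting step.} Strong convexity controls the dual movement only in Euclidean norm. You get $\|b-b_i\|\le\mu^{-1}|c_i|\|x_i\|$ and hence $\|v-v^{(i)}\|\lesssim|c_i|$. This is sharp: to first order $v(b)-v_i^0=-c_iQ_ix_i$, and $\|Q_ix_i\|\gtrsim\|x_i\|\asymp1$. So $\|v-v^{(i)}\|^2$ is of order one, not $O(\log n/p)$.

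Your anti-concentration scale is also off. The statistic $v_g=-\sum_jx_{j,g}\ell'(y_j;x_j^\top b)$ fluctuates on scale $\sqrt{n/p}\asymp1$, comparable to the radius of $K_g$. The right count is therefore $\E N(t)\lesssim pt$; your bound $Gt\sqrt p$ is already trivial at $t\asymp h_n$.

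With the correct inputs, your dichotomy gives, in expectation and up to score factors, $|S(b)\triangle S(b_i)|\lesssim pt+t^{-2}$. That is about $p^{2/3}$ changed blocks, i.e. a projector rate $p^{-1/3}$, not $h_n$. The same Euclidean information gives only $\|\Delta_i^D\|^2\lesssim\sum_{g\in\mathcal S_i}\|v_g-v^{(i)}_g\|^2=O(1)$. Your phrase ``summing over $ph_n$ blocks'' presupposes a per-block movement of order $h_n$, and nothing in your argument supplies it.

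The missing idea is a linear-response split of the dual movement, $v(b)-v_i^0=u_i+(v(b)-v_i^T)$, with $u_i=-c_iQ_ix_i$ and $Q_i=(I+H_iJ_i)^{-1}$ built from the deleted fit.
\begin{itemize}
\item Conditionally on $\mathcal D_{-i}$, $Q_ix_i$ is Gaussian with coordinate variances $O(1/n)$. So $\|u_i\|_{2,\infty}\le C_GS_ih_n$ off an event of conditional probability $O(n^{-20})$. This sup over $p+n$ coordinates, not a union over leave-outs, is where $\sqrt{\log n}$ enters.
\item The nonlinear remainder is bounded self-consistently: $\|v(b)-v_i^T\|\lesssim t_i\sqrt{N_i(2t_i)+S_i^2}$ with $t_i=D_0S_ih_n$. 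This comes from the KKT residual of the tilted objective $\widetilde F_i$ and a proximal Taylor expansion on blocks farther than $2t_i$ from $\partial K_g$.
\item Chebyshev counting at width $t_i$ then gives $|\mathcal S_i|\lesssim k_i$ and $\sum_{g\in\mathcal S_i}\|v_g(b)-v^0_{i,g}\|^2\lesssim t_i^2k_i$. These feed the hybrid-point comparison.
\item The rate $ph_n^3\asymp\mathfrak r_n$ follows after a dyadic integration of $N_i(2D_0S_ih_n)$ against the conditional tail of $S_i$. With only three score moments you cannot truncate $S_i$ at a logarithm.
\end{itemize}

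The dependence problem you flag for the count is resolved not by a Gordon/Stein decomposition but by clamping block $g$ at zero. The clamped-fit weights are then $\sigma(y_j,x_{j,-g})$-measurable. The conditional density of $x_{j,g}$ satisfies $-D^2\log f_j\preceq CpI$ by Assumption~\ref{ass:likelihood}. A convolution density bound plus a tube-volume estimate then give $\Prob\{\dist(v^0_{i,g},\partial K_g)\le r,\mathcal A_i\}\lesssim r$, with no Berry--Esseen step. This is applied to the deleted dual $v_i^0$, which is $\mathcal D_{-i}$-measurable, precisely so that the random width $2D_0S_ih_n$ can be handled by conditioning.

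\textbf{The prediction step.} This fails as written. $\Delta_i^D$ is not approximately independent of $x_i$. Replacing $D(b)$ by $D(b_i)$ turns the restricted minimizer into $b_i$ itself, so the ``replacement cost'' is all of $\Delta_i^D$. Cauchy--Schwarz then gives only $\E|x_i^\top\Delta_i^D|\lesssim(\E\|\Delta_i^D\|^2)^{1/2}\asymp\mathfrak r_n^{1/2}$.

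The paper instead shows that, outside $O(k_i)$ exceptional blocks, the restricted fit satisfies the proximal equation $m_{i,g}=\Phi_g(v_{i,g}(m_i))$. From this it derives the exact identity $\Delta_i=B_iX_i^\top w_i+Q_i^\top(\zeta_i+\nu_i)$, so that $x_i^\top\Delta_i=(X_iB_ix_i)^\top w_i+(Q_ix_i)^\top(\zeta_i+\nu_i)$. Apply $\ell_\infty$--$\ell_1$ duality with $\|X_iB_ix_i\|_\infty+\|Q_ix_i\|_{2,\infty}\le C_Gh_n$, $\|w_i\|_1+\|\zeta_i\|_{2,1}\lesssim t_i^2k_i$ and $\|\nu_i\|_{2,1}\lesssim t_ik_i$. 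This yields $|x_i^\top\Delta_i|\lesssim h_nt_ik_i$ deterministically, with no independence between $x_i$ and the remainders.

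That same bound is what closes the loss step, since $S_i|x_i^\top\Delta_i|\lesssim D_0^{-1}t_i^2k_i$ on the good event. Your H\"older route would need a moment of $x_i^\top\Delta_i$ higher than the first, which you do not have.
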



The proof of this theorem is presented in Section \ref{sec:proof-of-thm2.5}. 
In the above theorem the supremum appears after the expected value of the differences. We can strengthen these results if we impose extra tail conditions. 

\begin{assumption}[Tail assumption for scores]\label{ass:tail:score}
Suppose that for fixed $K,\alpha>0$, we have
\begin{equation}\label{eq:weibull}
 \Prob(|\eta_j|>u)\le 2\exp\{-(u/K)^\alpha\},\qquad u\ge0.
\end{equation}
\end{assumption}

Set $a=\max\{1/2,1/\alpha\}$ and $M_n=C_M\log^a(en)$, with a sufficiently large fixed $C_M$. If instead the loss has a globally bounded derivative, one may take $M_n$ constant. Define $t_n=D_0M_nh_n$, where $D_0$ is the fixed constant chosen in Section~\ref{sec:local}. Based on these extra assumptions, we can strengthen the result. 

\begin{theorem}\label{thm:uniform}
Under Assumptions~\ref{ass:blocks}--  \ref{ass:likelihood} and \ref{ass:tail:score}, for all sufficiently large $n,p$ and for $D=A,C$, we have
\begin{align}
 \E\max_{i\le n}|x_i^\top\Delta_i^D|
 &\le C h_nt_n(pt_n+M_n^2)+Cn^{-4},\label{eq:maxpred}\\
 \E\max_{i\le n}|\ell(y_i;x_i^\top b_i)-\ell(y_i;x_i^\top m_i^D)|
 &\le C t_n^2(pt_n+M_n^2)+Cn^{-4}.\label{eq:maxloss}
\end{align}
In particular, the right sides have orders
\begin{equation}\label{eq:expanded-display-015}
O\!\left(M_n^2\frac{\log^{3/2}(en)}{\sqrt n}\right),
 \qquad
 O\!\left(M_n^3\frac{\log^{3/2}(en)}{\sqrt n}\right),
\end{equation}
respectively. Moreover,
\begin{align}
 \max_i\|\Delta_i^D\|
 &=O_p\!\left(M_n^{3/2}\frac{\log^{3/4}(en)}{n^{1/4}}\right),\label{eq:maxvector}\\
 \max_i\frac{\|P-P_i\|_F^2}{p}&=O_p(M_nh_n).\label{eq:maxproj}
\end{align}
The same statements hold with constant $M_n$ when the score is globally bounded. 
\end{theorem}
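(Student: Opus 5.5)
The plan is to upgrade the averaged arguments behind Theorem~\ref{thm:main} to arguments that hold uniformly in $i$. The tool is the tail condition \eqref{eq:weibull}: on a high-probability event all scores and leverages are bounded at the scale $M_n$.

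\textbf{Step 1: the good event.} Define $\mathcal E_n$ by requiring
\[
\max_j|\eta_j|\le M_n,\qquad \|X\|_{\op}\le C,\qquad \max_j\|x_j\|\le C,\qquad \max_j\bigl|x_j^\top\Sigma^{-1/2}\,\cdot\bigr|\ \text{controlled}.
\]
Given $\max_j|\eta_j|\le M_n$, I also require $\max_j|\ell'(y_j;x_j^\top b)|\le CM_n$ and the same bound for each deleted fit $b_i$. These fitted-score bounds follow from the null-score bound together with the Lipschitz bound $\ell''\le L_2$ and the prediction bound $\max_j|x_j^\top b|\lesssim\sqrt{\log n}$, obtained via leave-one-out independence and Gaussian concentration.
\begin{itemize}
\item Choosing $C_M$ large gives $\Prob(\mathcal E_n^c)\le n^{-10}$.
\item Off $\mathcal E_n$, polynomial moment bounds on all quantities, obtained from strong convexity with parameter $\mu$, make the contribution at most $Cn^{-4}$ by Cauchy--Schwarz.
\end{itemize}
This accounts for the $Cn^{-4}$ terms in \eqref{eq:maxpred} and \eqref{eq:maxloss}.

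\textbf{Step 2: deterministic local perturbation bound on $\mathcal E_n$.} Strong convexity gives $\|b-b_i\|\le \mu^{-1}\|x_i\||\ell'(y_i;x_i^\top b_i)|\le CM_nh_n$ up to constants, which is the radius $t_n=D_0M_nh_n$ of Section~\ref{sec:local}. The fixed-point form $b_g=\Phi_g(\mu b_g-\nabla_gL(b))$ shows that a group changes status only if its proximal input $v_g$ lies within distance $O(t_n)$ of the boundary $\partial K_g$.

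The count of such flips is where I expect the difficulty, and the plan is to make it uniform in $i$. I would bound $|S(b)\triangle S(b_i)|$, and more precisely $\|P-P_i\|_F^2$, by the number of groups with $\dist(v_g,\partial K_g)\le Ct_n$. That number is at most $p\,\Prob(\text{near boundary})+$ fluctuation. Conditional anti-concentration of $v_g$ given the other data (the density of $v_g$ is bounded thanks to \eqref{eq:likelihood} and the Gaussian design) gives the expected count $O(pt_n)$, and a Bernstein/union bound over $i\le n$ adds only $O(\log n)$. Crucially, the count of near-boundary groups for the full fit $b$ does not depend on $i$, so a single event controls all $i$ simultaneously. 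This yields \eqref{eq:maxproj}: $\max_i\|P-P_i\|_F^2/p=O_p(t_n)=O_p(M_nh_n)$.

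\textbf{Step 3: from flips to $\Delta_i^D$.} Compare the KKT systems of $b_i$ and $m_i^D$. Their difference is driven only by the flipped groups, each of which contributes a gradient mismatch of size $O(t_n)$, since those groups sit within $t_n$ of the kink. Using the $C^2$ bound \eqref{eq:bound_on_d2_phi_g} on $\Phi_g$ and the Hessian lower bound $\mu$, this gives
\[
\|\Delta_i^D\|^2\lesssim t_n^2\,(\#\text{flips})+\text{second-order terms}\lesssim t_n^2(pt_n+M_n^2),
\]
where the $M_n^2$ accounts for the score contribution through the Lipschitz constant $L_3$. Taking square roots gives \eqref{eq:maxvector}.

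For the prediction bound \eqref{eq:maxpred}, note that $x_i$ is independent of $b_i$ but not of $m_i^D$, since $m_i^D$ depends on $b$. The linearization $x_i^\top\Delta_i^D$ therefore inherits the smaller factor $h_n$ rather than $\|x_i\|$. The justification is that $\Delta_i^D$ depends on $x_i$ only through the one-dimensional direction $x_i\ell'$, which is handled by a rank-one (Sherman–Morrison) decomposition on the restricted Hessian. This gives $h_nt_n(pt_n+M_n^2)$.

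\textbf{Step 4: the loss bound.} A Taylor expansion gives $|\ell(y_i;x_i^\top b_i)-\ell(y_i;x_i^\top m_i^D)|\le (|\ell'(y_i;x_i^\top b_i)|+L_2|x_i^\top\Delta_i^D|)\,|x_i^\top\Delta_i^D|$. On $\mathcal E_n$ the score factor is at most $CM_n\lesssim t_n/h_n$, which yields \eqref{eq:maxloss}.

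Substituting $t_n\asymp M_n\sqrt{\log n/n}$ and $p\asymp n$ gives the orders in \eqref{eq:expanded-display-015}. When the score is globally bounded, $M_n$ can be taken constant throughout.
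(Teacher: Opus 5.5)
Your outer architecture matches the paper's proof: a common event built from union bounds, deterministic bounds on that event, a single full-data count $N_{\mathrm{full}}$ in place of the $n$ deleted counts, and a polynomial envelope times $\Prob(\mathcal E_n^c)^{1/2}$ off the event. The gap is in the deterministic core.

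\textbf{The per-block movement claim fails.} Your Step 2 bound $\|b-b_i\|\le CM_nh_n$ is false. Because $\Sigma\succeq(c_\Sigma/p)I$, we have $\|x_i\|\asymp1$, so $\mu^{-1}|c_i|\|x_i\|$ is of order one times the score. The same holds for $\|v(b)-v_i^0\|$. The width $t_n$ is a blockwise scale, and nothing in your argument forces every block's dual input to move by only $O(t_n)$. Hence neither ``a group changes status only if its input is within $O(t_n)$ of $\partial K_g$'' nor the Step 3 claim ``each flipped group contributes a mismatch of size $O(t_n)$'' is justified.

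In the paper, only the linear part of the movement is blockwise small. This part is $u_i=-c_iQ_ix_i$, with $Q_i=(I+H_iJ_i)^{-1}$ built from deleted-data objects. Since $x_i\mid\mathcal D_{-i}\sim N(0,\Sigma)$, a coordinatewise Gaussian union bound, followed by a union over $i$, gives $\|u_i\|_{2,\infty}\le t_n/4$. The nonlinear remainder $v(b)-(v_i^0+u_i)$ is controlled only in $\ell_2$, by $Ct_n\sqrt{N_i(2t_n)+M_n^2}$. This comes from the tilted objective $\widetilde F_i$ and the proximal Taylor expansion, and that expansion fails precisely on blocks near the \emph{deleted} boundary. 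Stratum changes therefore lie in $\mathcal B_i\cup E_i$. Here $E_i$ contains up to $C(N_i(2t_n)+M_n^2)$ blocks whose dual inputs may move by much more than $t_n$.

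\textbf{The reduction to one full-data count is missing.} Because the remainder bound involves the deleted count $N_i$, passing to a single full-data count is not automatic. This is the step you assert without a mechanism. The paper uses a separation and absorption argument. A block within $2t_n$ of the deleted boundary but farther than $Rt_n$ from the full-data boundary must satisfy $\|v_g(b)-v_{i,g}^0-u_{i,g}\|>(R-9/4)t_n$. Summing these squares gives $N_i\le N_{\mathrm{full}}(Rt_n)+C(R-9/4)^{-2}(N_i+M_n^2)$. Choosing a large fixed $R$ then yields $\max_iN_i\le2N_{\mathrm{full}}(Rt_n)+CM_n^2$ on $\mathcal E_n$.

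\textbf{Smaller omissions.} Your good event leaves out the simultaneous bounds on $\|Q_ix_i\|_{2,\infty}+\|X_iB_ix_i\|_\infty$ and the score-energy window. The anti-concentration for $v_g(b)$ also requires clamping block $g$. Clamping produces a conditional convolution $\sum_ja_jx_{j,g}$ whose coefficients $a_j$ do not depend on column block $g$.

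\textbf{The prediction bound needs a different mechanism.} The factor $h_n$ in \eqref{eq:maxpred} cannot come from a rank-one Sherman--Morrison structure, because $m_i^D$ depends on $x_i$ through the random domain $D(b)$. The paper instead uses the exact identity $\Delta_i=B_iX_i^\top w_i+Q_i^\top(\zeta_i+\nu_i)$ and pairs it with $x_i$. On one side, $\|X_iB_ix_i\|_\infty$ and $\|Q_ix_i\|_{2,\infty}$ are at most $C_Gh_n$. On the other, $\|w_i\|_1+\|\zeta_i\|_{2,1}\le Ct_n^2k_i$ and $\|\nu_i\|_{2,1}\le Ct_nk_i$. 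This pairing is deterministic and needs no independence between $x_i$ and the remainders.
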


The proof is given in Section~\ref{sec:proof:thm2.8}.

\section{Proof of Theorem~\ref{thm:main}}\label{sec:proof-of-thm2.5}

The proof combines deterministic comparison bounds with probabilistic control of blocks near activation boundaries.
Section~\ref{sec:local} bounds the prediction and loss discrepancies on suitable regularity events in terms of the held-out score and a boundary count at a score-dependent width.
Section~\ref{sec:integration} combines fixed-width boundary-count estimates with conditional bounds for the held-out score to obtain expectation bounds on suitable regularity events.
Finally, Section~\ref{sec:integration2} controls the expected errors on the remaining exceptional events and combines the resulting estimates with the preceding bounds to establish all the assertions of the theorem.

\subsection{Existence, measurability, and stability} We first establish that the estimators are well defined and measurable, and record basic stability bounds needed for the comparisons and conditional arguments in the proof.
\begin{lemma}[Existence, measurability, and elementary stability]
\label{lem:stability}

The minimizers $b$, $b_i$, $m_i^A$, and $m_i^C$, as well as the minimizers of $F$ and $F_i$ over $\{\beta\in\mathbb R^p:\beta_g=0\}$ for each deterministic block $g\in\{1,\ldots,G\}$, exist uniquely and are measurable.

For the following coefficient and score bounds,
let $T\subseteq\{1,\ldots,n\}$ be a deterministic
set of retained observations, and let
$\mathcal I\subseteq\{1,\ldots,G\}$ be a deterministic
set of free blocks.
Write
$d_{\mathcal I}=\sum_{g\in\mathcal I}d_g$,
$Z=X_{T,\mathcal I}$,
where $X_{T,\mathcal I}$ is the design restricted
to the retained observations and free blocks.
Let $z_j^\top$ denote its row corresponding
to observation $j\in T$.
Define the restricted penalty
$r_{\mathcal I}(u)
=\sum_{g\in\mathcal I}r_g(u_g)$
and the corresponding estimator
\[
a=\arg\min_{u\in\R^{d_{\mathcal I}}}
\left\{
\sum_{j\in T}\ell(y_j;z_j^\top u)
+\lambda_1r_{\mathcal I}(u)
+\frac{\mu}{2}\|u\|^2
\right\}.
\]
Let
$\eta=(\ell'(y_j;0))_{j\in T},$ $
\dot\ell(a)=(\ell'(y_j;z_j^\top a))_{j\in T}.$
Then
\begin{equation}\label{eq:normbound}
\mu\|a\|\le\|Z\|_{\op}\|\eta\|.
\end{equation}
Moreover,
\begin{equation}\label{eq:scoresandwich}
\frac{\mu}{\mu+L_2\|Z\|_{\op}^2}\|\eta\|
\le\|\dot\ell(a)\|
\le
\left(1+\frac{L_2\|Z\|_{\op}^2}{\mu}\right)\|\eta\|.
\end{equation}

The estimator $a$ is measurable with respect to
$\sigma((y_j,x_{j,\mathcal I}):j\in T)$.

Finally, for the full-data and deleted estimators,
with $c_i=\ell'(y_i;x_i^\top b)$,
\begin{equation}\label{eq:scorecontraction}
\|b-b_i\|\le\mu^{-1}|c_i|\|x_i\|,
\qquad
|c_i|\le|\ell'(y_i;x_i^\top b_i)|.
\end{equation}

\end{lemma}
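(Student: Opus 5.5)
Existence and uniqueness will come from strong convexity and coercivity. Each objective ($F$, $F_i$, and the restricted problem defining $a$) is a sum of finite convex functions ($\ell(y_j;\cdot)$ is convex and finite, and each $r_g$ is convex and finite) plus $\frac\mu2\|\cdot\|^2$. It is therefore continuous and $\mu$-strongly convex on $\R^p$. Restricting it to a nonempty closed convex set ($A(b)$, $C(b)$, or $\{\beta_g=0\}$) preserves strong convexity and lower semicontinuity, so a unique minimizer exists. A point to note is that $A(b),C(b)$ are random sets depending on $b$; I treat them after establishing measurability of $b$.

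For measurability, I would use the fact that the objective is a Carathéodory function: it is jointly measurable in the data (since $\ell$ is jointly measurable) and continuous in $\beta$. The minimizer over a fixed closed set $D$ is then measurable, for example via the measurable maximum theorem, or more concretely by writing $\{a\in U\}$ for open $U$ as a countable condition $\inf_{\beta\in U\cap D\cap\mathbb Q^p}F < \inf_{\beta\in D\setminus U'}F$ over rational neighborhoods, using uniqueness and strong convexity. This gives measurability of $b$, $b_i$, $a$ and the block-deleted fits. For $m_i^D$, the set $D(b)$ takes only finitely many values, determined by the sign/support pattern of $b$, which is a measurable function of $b$. Hence $m_i^D=\sum_{\text{patterns }s}\mathbf 1\{\text{pattern}(b)=s\}\,m_i^{D_s}$ with each $m_i^{D_s}$ measurable. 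The statement about $\sigma((y_j,x_{j,\mathcal I}):j\in T)$ follows because the restricted objective depends only on those variables.

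For the bounds \eqref{eq:normbound}--\eqref{eq:scoresandwich}, I would write the KKT condition $0=Z^\top\dot\ell(a)+\lambda_1 s+\mu a$ with $s\in\partial r_{\mathcal I}(a)$. Taking the inner product with $a$, and using $\langle s,a\rangle\ge r_{\mathcal I}(a)\ge0$ (convexity and $r(0)=0$) together with monotonicity $\langle \dot\ell(a)-\eta, Za\rangle\ge0$, gives
\[
\mu\|a\|^2\le-\langle \eta,Za\rangle\le\|\eta\|\|Z\|_{\op}\|a\|,
\]
which is \eqref{eq:normbound}. For the upper bound in \eqref{eq:scoresandwich}, I would use $\|\dot\ell(a)-\eta\|\le L_2\|Za\|$ together with \eqref{eq:normbound}. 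The lower bound is the step I expect to be the main obstacle. There I would write $\dot\ell(a)=\eta+Wza$, with $W$ the diagonal matrix of averaged second derivatives, $0\preceq W\preceq L_2 I$. The optimality condition combined with monotonicity of $\partial r$ (comparing $s$ to the subgradient at $0$) should give $\mu\|a\|^2\le -\langle\dot\ell(a),Za\rangle$, i.e.\ $\mu\|a\|\le\|Z\|_{\op}\|\dot\ell(a)\|$. Then
\[
\|\eta\|\le\|\dot\ell(a)\|+L_2\|Z\|_{\op}\|a\|\le(1+L_2\|Z\|_{\op}^2/\mu)\|\dot\ell(a)\|,
\]
which already gives a ratio at least as good as the stated $\mu/(\mu+L_2\|Z\|_{\op}^2)$.

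For \eqref{eq:scorecontraction}, I would compare the optimality conditions of $F$ at $b$ and of $F_i$ at $b_i$, using $\nabla F(\beta)=\nabla F_i(\beta)+\ell'(y_i;x_i^\top\beta)x_i$. Strong monotonicity of $\partial F_i$ gives
\[
\mu\|b-b_i\|^2\le\langle -c_ix_i,\,b-b_i\rangle,
\]
hence $\|b-b_i\|\le\mu^{-1}|c_i|\|x_i\|$. A sharper version keeps the term $\langle\nabla L_i(b)-\nabla L_i(b_i),b-b_i\rangle\ge0$ and yields $-c_i\,x_i^\top(b-b_i)\ge0$. This says $c_i$ and $x_i^\top b_i-x_i^\top b$ have the same sign. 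By monotonicity of $\ell'(y_i;\cdot)$, $\ell'(y_i;x_i^\top b_i)$ then lies beyond $c_i$ in that same direction, so $|c_i|\le|\ell'(y_i;x_i^\top b_i)|$.
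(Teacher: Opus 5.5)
Your proposal is correct and follows essentially the same route as the paper: coercivity plus strong convexity for existence and uniqueness, the finite sign/support-pattern decomposition for $m_i^D$, the KKT inner product with $a$ (using $\langle s,a\rangle\ge0$ and monotonicity of $\ell'$) for \eqref{eq:normbound} and both sides of \eqref{eq:scoresandwich} (your lower bound gives exactly the stated constant), and strong monotonicity of $\partial F_i$ between $b$ and $b_i$ together with monotonicity of $\ell'(y_i;\cdot)$ for \eqref{eq:scorecontraction}. The only real difference is measurability, where you appeal to a Carath\'eodory/measurable-argmin argument or a countable characterization of $\{a\in U\}$; the paper instead builds measurable approximate minimizers from a deterministic countable dense set and uses the quadratic growth inequality to show they converge to the minimizer. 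Both arguments are standard and valid.
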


\begin{remark}
The generic estimator $a$ covers both ambient and
zero-clamped optimization problems.
For the ambient deleted estimator, take
$T=\{1,\ldots,n\}\setminus\{i\}$ and let all
blocks be free. Then $a=b_i$ and $Z=X_i$.
In particular,~\eqref{eq:normbound} gives
$\|b_i\|
\le \mu^{-1}\|X_i\|_{\op}\|\eta_{-i}\|.$
This bound is used in Lemmas~\ref{lem:gaussian}
and~\ref{lem:baddeleted}, and consequently
in the probabilistic estimates of
Section~\ref{sec:integration}.
The deletion-stability and score-contraction
bounds in~\eqref{eq:scorecontraction} are also
used in Lemmas~\ref{lem:global}
and~\ref{lem:tilt}.

For the zero-clamped estimator, fix a block
$g$, impose $\beta_g=0$, and minimize the
deleted objective over all remaining blocks.
Taking
$T=\{1,\ldots,n\}\setminus\{i\}$,
$\mathcal I=\{1,\ldots,G\}\setminus\{g\},$
gives
$a=\breve b_{i,-g}^{(-g)},$
$Z=X_{i,-g}.$
Thus ``zero-clamped'' means that one
prescribed block is fixed at zero while
all other blocks are refitted.

\end{remark}

\subsection{Deterministic comparison bounds}\label{sec:local}

In this section, we establish deterministic bounds on the coefficient, prediction and loss differences between the exact LO estimator $b_i$ and the restricted estimators $m_i^A$ and $m_i^C$, together with a bound on the projector discrepancy associated with $b$ and $b_i$. 

A key step in analyzing the LO approximation error is to compare the minimizers defining the exact and restricted LO estimates. We explain the main ideas behind our approach and introduce the necessary notation and regularity events as the argument develops. The resulting bounds will be used to establish the expectation bounds in the following subsections.

\paragraph{Reduction to a dual-displacement estimate.}
Fix a deletion index $i$ and a restricted domain $D\in\{A,C\}$, as defined in \eqref{eq:domains}. 
The first step is to study the difference between the unrestricted minimizer $b_i$  and the restricted minimizer $m_i^D$.
Since the domain $D(b)$ depends on the full-data  estimator $b$, directly comparing the KKT conditions  of $b_i$ and $m_i^D$ would introduce an additional  normal-cone term associated with the constraints. 
We therefore seek to reformulate this comparison in terms of quantities that are easier to control.

Since Assumptions~2.1, 2.3, and~2.4 ensure that $F_i$ is $\mu$-strongly convex, and $b_i$ is its unrestricted minimizer, we have
\[
\frac{\mu}{2}\|m_i^D-b_i\|^2
\le F_i(m_i^D)-F_i(b_i).
\]
Moreover, since $m_i^D$ minimizes $F_i$ over $D(b)$, any feasible point $a_i\in D(b)$ satisfies $F_i(m_i^D)\le F_i(a_i)$.
Thus, if we can construct such a point using $b$ and $b_i$, the preceding inequality gives
\[
\frac{\mu}{2}\|m_i^D-b_i\|^2 \le F_i(a_i)-F_i(b_i).
\]
This reduces the comparison between $m_i^D$ and $b_i$ to controlling the objective increase $F_i(a_i)-F_i(b_i)$ for a suitably constructed point $a_i$.

To construct such a point, note that the restrictions in $D(b)$ are determined by the strata selected by $b$. We therefore identify the blocks whose strata differ between $b$ and $b_i$ and define the stratum-change set as
\begin{equation}\label{eq:stratum-change-set}
\mathcal S_i
=
\{g:M_g(b_g)\ne M_g(b_{i,g})\},
\end{equation}
where $M_g(\cdot)$ is defined in Section \ref{sec:restricted}. 
We then construct the hybrid point $a_i$ by retaining the deleted coefficients on blocks whose strata are unchanged and using the full-data coefficients on the remaining blocks:
\begin{equation}\label{eq:hybrid-introduction}
a_{i,g}
=
\begin{cases}
b_{i,g}, & g\notin\mathcal S_i,\\
b_g,     & g\in\mathcal S_i.
\end{cases}
\end{equation}
By construction, $a_i\in\mathcal M^\circ(b)\subseteq D(b)$, while $a_i$ differs from $b_i$ only on $\mathcal S_i$. To use this property to control the objective increase $F_i(a_i)-F_i(b_i)$, we introduce the dual inputs
\begin{equation}\label{eq:deleted-dual-inputs}
v_i(\beta)=-\nabla L_i(\beta),\qquad
v(\beta)=-\nabla L(\beta),\qquad
v_i^0=v_i(b_i).
\end{equation}
Using the construction of $a_i$ and strong convexity of $F_i(\cdot)$ we have the following lemma:

\begin{lemma} \label{lem:strong_convexity}
If $\|X_i\|_{\mathrm{op}}\le K_X$, then
\begin{equation}\label{eq:hybrid-motivation}
\frac{\mu}{2}\|m_i^D-b_i\|^2
\le
F_i(a_i)-F_i(b_i)
\le
C
\sum_{g\in\mathcal S_i}
\|v_g(b)-v_{i,g}^0\|^2,
\end{equation}
where $C$ depends only on $\mu$, $L_2$, and $K_X$.
\end{lemma}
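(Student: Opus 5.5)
The plan is to prove the first inequality of \eqref{eq:hybrid-motivation} exactly as in the reduction preceding the lemma, and then to bound the objective gap $F_i(a_i)-F_i(b_i)$ blockwise. Strong convexity of $F_i$ and minimality of $b_i$ give $\frac{\mu}{2}\|m_i^D-b_i\|^2\le F_i(m_i^D)-F_i(b_i)$. Since $a_i\in\mathcal M^\circ(b)\subseteq D(b)$ and $m_i^D$ minimizes $F_i$ over $D(b)$, we also have $F_i(m_i^D)\le F_i(a_i)$.

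For the second inequality, write $F_i=L_i+R$ with $R(\beta)=\sum_g R_g(\beta_g)$ and $R_g=\lambda_1 r_g+\frac{\mu}{2}\|\cdot\|^2$. The difference $a_i-b_i$ is supported on the blocks in $\mathcal S_i$, where it equals $b_g-b_{i,g}$.

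For the loss part, $\nabla^2 L_i(\beta)=X_i^\top\diag(\ell'')X_i\preceq L_2K_X^2 I$ by Assumption~\ref{ass:loss}. A second-order Taylor bound then gives
\[
L_i(a_i)-L_i(b_i)\le\langle\nabla L_i(b_i),a_i-b_i\rangle+\tfrac{L_2K_X^2}{2}\|a_i-b_i\|^2
=-\sum_{g\in\mathcal S_i}\langle v^0_{i,g},b_g-b_{i,g}\rangle+\tfrac{L_2K_X^2}{2}\sum_{g\in\mathcal S_i}\|b_g-b_{i,g}\|^2 .
\]
For the penalty part, the KKT condition of the full-data problem \eqref{eq:objectives} reads $0\in\nabla L(b)+\lambda_1\partial r(b)+\mu b$. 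Blockwise this says $v_g(b)\in\partial R_g(b_g)$. By convexity of $R_g$,
\[
R_g(b_g)-R_g(b_{i,g})\le\langle v_g(b),b_g-b_{i,g}\rangle,\qquad g\in\mathcal S_i .
\]
Blocks outside $\mathcal S_i$ contribute zero. Adding the two parts yields
\[
F_i(a_i)-F_i(b_i)\le\sum_{g\in\mathcal S_i}\Big(\langle v_g(b)-v^0_{i,g},\,b_g-b_{i,g}\rangle+\tfrac{L_2K_X^2}{2}\|b_g-b_{i,g}\|^2\Big).
\]

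The remaining step, and the only one needing some care, is to bound $\|b_g-b_{i,g}\|$ by the dual displacement $\|v_g(b)-v^0_{i,g}\|$. The KKT conditions give $v_g(b)\in(\mu I+\lambda_1\partial r_g)(b_g)$ for the full-data problem and $v^0_{i,g}=v_{i,g}(b_i)\in(\mu I+\lambda_1\partial r_g)(b_{i,g})$ for the deleted problem \eqref{eq:LOO_prob}. Hence $b_g=\Phi_g(v_g(b))$ and $b_{i,g}=\Phi_g(v^0_{i,g})$, with $\Phi_g$ as in \eqref{eq:prox-def}. Since $\lambda_1\partial r_g$ is maximal monotone, $\mu I+\lambda_1\partial r_g$ is $\mu$-strongly monotone, so its inverse $\Phi_g$ is single-valued and $\mu^{-1}$-Lipschitz. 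Therefore $\|b_g-b_{i,g}\|\le\mu^{-1}\|v_g(b)-v^0_{i,g}\|$.

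Applying Cauchy--Schwarz to the inner products then gives \eqref{eq:hybrid-motivation} with
\[
C=\mu^{-1}+\frac{L_2K_X^2}{2\mu^2},
\]
which depends only on $\mu$, $L_2$ and $K_X$. I do not expect a genuine obstacle. The point to check is that the subgradient of $R_g$ at $b_g$ is exactly the dual input $v_g(b)$, including on blocks with $b_g=0$, where $v_g(b)\in K_g$. In that case $\Phi_g(v_g(b))=0$ still holds, so the Lipschitz bound applies uniformly across all strata.
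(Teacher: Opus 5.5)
Your proof is correct and follows essentially the same route as the paper's. Both arguments use the Hessian bound $L_2K_X^2$ on the loss Bregman term, the subgradient inequality at $b_g$ from the full-data KKT condition, the blockwise Lipschitz bound $\|b_g-b_{i,g}\|\le\mu^{-1}\|v_g(b)-v^0_{i,g}\|$, and Cauchy--Schwarz, and both arrive at the same constant $C=\mu^{-1}+L_2K_X^2/(2\mu^2)$. The only difference is cosmetic: you fold the ridge into $R_g$ and use $v_g(b)\in\partial R_g(b_g)$ directly, whereas the paper writes the gap as Bregman divergences of $L_i$ and $r$ plus $\tfrac{\mu}{2}\|\delta_i\|^2$, which produces an extra $-\tfrac{\mu}{2}\|\delta_i\|^2$ that it then drops.
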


The proof of this lemma can be found in Section \ref{proof:lemstrong:sconvex}.


\paragraph{Constructing trial point}

Obtaining a sharp bound on the sum in \eqref{eq:hybrid-motivation} directly remains challenging. We therefore introduce a ``trial'' dual vector $v_i^T$ and its corresponding primal point $b_i^T$ to split the dual movement $v(b)-v_i^0$ into two parts. For a tentative dual perturbation $u$ (the exact choice will be clarified later), set
\[
v_i^T=v_i^0+u,\qquad b_i^T=\Phi(v_i^T).
\]
where $\Phi_g$ is defined in \eqref{eq:prox-def}. 
By the definition of $v_i^T$, we have
\begin{equation}\label{eq:important:upper}
\begin{aligned}
\sum_{g\in\mathcal S_i}
\|v_g(b)-v_{i,g}^0\|^2
&\le
2\sum_{g\in\mathcal S_i}
\|v_g(b)-v_{i,g}^T\|^2+\|u_g\|^2
\le
2\|v(b)-v_i^T\|^2
+
2|\mathcal S_i|\max_g\|u_g\|^2.
\end{aligned}
\end{equation}

The preceding inequality suggests two requirements for the trial perturbation $u$:
\begin{itemize}
\item The trial dual vector $v_i^T$ should approximate $v(b)$ with a sharp bound on the error $\|v(b)-v_i^T\|^2$.

\item The largest blockwise movement $\max_g\|u_g\|^2$ should be small and admit an explicit bound.
\end{itemize}

We use the first requirement to guide the construction of $u$. 
Recall from~\eqref{eq:scorecontraction} that $c_i=\ell'(y_i;x_i^\top b)$. The full-data KKT condition gives $0\in\partial\widetilde F_i(b)$, where
\begin{equation}\label{eq:tilted-objective-motivation}
\widetilde F_i(\beta)
=
F_i(\beta)+c_ix_i^\top\beta.
\end{equation}
Here $c_i$ is held fixed as $\beta$ varies. Since $\widetilde F_i$ is $\mu$-strongly convex, $b$ is its unique minimizer. Thus, $b$ and $b_i$ minimize objectives differing only by a linear term.
Their optimality conditions also give
\begin{equation}\label{eq:proximal-optimality-pair}
b_i=\Phi(v_i^0),\qquad
b=\Phi(v(b)),\qquad
v(b)=-\nabla L_i(b)-c_ix_i,
\end{equation}
where
\[
\Phi(v)=(\Phi_g(v_g))_{g=1}^G
      =(\mu I+\lambda_1\partial r)^{-1}(v),
\]
and $\Phi_g$ is defined in \eqref{eq:prox-def}.

As we discussed before, the goal is to make $b_i^T$ and $v_i^T$ close to $b$ and $v(b)$. Since $b$ is the minimizer of $\widetilde F_i$, a small KKT residual
will imply that $b_i^T$ is close to $b$ by
strong convexity. For each block, let $J_g(v)=D\Phi_g(v)$ outside
$K_g$, and set $J_g(v)=0$ on $K_g$, including
its boundary. Define
\begin{equation}\label{eq:linearized-local-operators}
H_i=\nabla^2L_i(b_i),\qquad
J_i=\operatorname{diag}\{J_g(v_{i,g}^0)\}.
\end{equation}
 Note that the difference $b_i^T- b_i$ can be broken into $b_i^T- b_i = r+ d$
where $r$ and $d$ are defined as
\[
d=J_iu,\quad r=b_i^T-b_i-J_iu=\Phi(v_i^0+u)-\Phi(v_i^0)-\operatorname{diag}\{J_g(v_{i,g}^0)\}u.
\]
 Since $b_i^T = \Phi (v_i^T)$, we have
$v_i^T\in\mu b_i^T+\lambda_1\partial r(b_i^T)$.
Therefore,
\[
g_i^T
=
\nabla L_i(b_i^T)+c_ix_i+v_i^T
\in\partial\widetilde F_i(b_i^T).
\]
Note that since $0 \in \partial\widetilde F_i(b)$, if we can show that $g_i^T$ is small, then we can use strong convexity of $\widetilde F_i(b)$ to bound the difference $b_i^T-b$. Hence, next we simplify the expression for $g_i^T$. Using $v_i^0=-\nabla L_i(b_i)$ and
$b_i^T=b_i+J_iu+r$, we obtain
\begin{align*}
g_i^T&= c_ix_i+v_i^T-v_i^0+\nabla L_i(b_i+J_iu+r)+v_i^0
\\&=c_ix_i+u+H_iJ_iu \nonumber \\
&+(\nabla L_i(b_i+J_iu)-\nabla L_i(b_i)-H_iJ_iu)+(\nabla L_i(b_i+J_iu+r)-\nabla L_i(b_i+J_iu))
\\&=c_ix_i+u+H_iJ_iu+R_i^{\mathrm{loss}}
+R_i^{\mathrm{prox}},
\end{align*}
where
$R_i^{\mathrm{loss}}
=
\nabla L_i(b_i+J_iu)-\nabla L_i(b_i)-H_iJ_iu$
and
$R_i^{\mathrm{prox}}
=
\nabla L_i(b_i+J_iu+r)-\nabla L_i(b_i+J_iu).$
The first three terms are the linear part of the
KKT residual. We choose $u$ such that 
\begin{equation}\label{eq:linearized-kkt-motivation}
u+H_iJ_iu+c_ix_i=0, 
\end{equation}
Hence, if we define the following two matrices
\begin{equation}\label{eq:deletedobjects}
Q_i=(I+H_iJ_i)^{-1},\qquad
B_i=J_iQ_i=(I+J_iH_i)^{-1}J_i,
\end{equation}
then we can write $u_i$ and $d_i$ in the following way: 
\begin{equation}\label{eq:linear-response-definition}
u_i=-c_iQ_ix_i,\qquad
d_i=J_iu_i=-c_iB_ix_i.
\end{equation}
With this choice of $u_i$, we can now return to \eqref{eq:important:upper} and derive upper bounds for the different quantities appearing on its right-hand side. Since all of these quantities are random, it is convenient to first restrict attention to events that we expect to hold with high probability. We construct these regularity events below. Define the held-out score and deleted-data null-score energy by
\begin{equation}\label{eq:score-and-energy}
 S_i=1+|\ell'(y_i;x_i^\top b_i)|,\qquad
 V_i=\frac{1}{n-1}\sum_{j\ne i}\eta_j^2,\qquad \eta_j=\ell'(y_j;0).
\end{equation}
Recall $h_n=\sqrt{\log(en)/n}$. Choose sufficiently large fixed
constants $K_X,K_x,C_G$, followed by $D_0\ge4C_G$,
and put $T_n=(D_0h_n)^{-1}$. Define
\begin{equation}\label{eq:definition-of-a_i}
 \mathcal A_i=\left\{\|X_i\|_{\op}\le K_X,\quad
 \frac{v_-}{2}\le V_i\le2v_+\right\},
\end{equation}
\begin{equation}\label{eq:Gi}
 \mathcal G_i=\left\{\|x_i\|\le K_x,\quad
 \|Q_ix_i\|_{2,\infty}+\|X_iB_ix_i\|_\infty\le C_Gh_n\right\},
\end{equation}
\begin{equation}\label{eq:Hi}
 \mathcal H_i=\mathcal A_i\cap\mathcal G_i\cap\{S_i\le T_n\}.
\end{equation}
Here $\|z\|_{2,\infty}=\max_g\|z_g\|$, and $v_-,v_+$ are
from Assumption~\ref{ass:loss}.

One of the terms in \eqref{eq:important:upper} is $\max_g\|u_g\|^2$. By \eqref{eq:scorecontraction} and the definition of $\mathcal G_i$,
\begin{equation}\label{eq:ui-upperbound}
 \|u_i\|_{2,\infty}\le C_GS_ih_n.
\end{equation}
where $C_G$ is the constant chosen in event $\mathcal G_i$. 

Finding upper bounds for $\mathcal S_i$ and $\|v(b)-v_i^T\|^2$ is less direct and we do them in the following way. A stratum change between $b_i$ and $b$ requires the dual movement from $v_{i,g}^0$ to $v_g(b)$ to reach the activation boundary $\partial K_g$. Thus, blocks whose deleted dual inputs are close to this boundary may change strata under a small perturbation. To quantify this, for $v\in\R^{d_g}$ and a nonempty set $K\subseteq\R^{d_g}$, define $\operatorname{dist}(v,K)=\inf_{u\in K}\|v-u\|$. We then introduce the deleted boundary count at width $r>0$:
\begin{equation}\label{eq:def-boundarycount}
N_i(r)=\bigl|\{g:\operatorname{dist}(v_{i,g}^0,\partial K_g)\le r\}\bigr|.
\end{equation}
Thus, $N_i(r)$ counts blocks whose strata may change under small dual movements at scale $r$. For the fixed constant $D_0\ge4C_G$, select a width
\begin{equation}\label{eq:linearcontrol}
 t_i=D_0S_ih_n,\qquad \text{then} \quad
 \|u_i\|_{2,\infty}\le t_i/4\quad\text{on }\mathcal G_i.
\end{equation}
Define
\begin{equation}\label{eq:define-setB-ki}
 \mathcal B_i=\{g:\operatorname{dist}(v_{i,g}^0,\partial K_g)\le2t_i\},
 \qquad k_i=N_i(2t_i)+S_i^2.
\end{equation}
The set $\mathcal B_i$ contains the blocks close to their activation boundaries and $|\mathcal B_i| = N_i(2t_i)$. 

Assume that $\mathcal A_i\cap\mathcal G_i$ holds and $t_i\le1$. The following lemma gives deterministic bounds on the trial approximation error and controls the stratum changes between $b_i$ and $b$.

\begin{subequations}
\begin{lemma}
\label{lem:tilt}
The trial dual vector $v^T_i=v_i^0-c_iQ_ix_i$ and trial point $b_{i}^T=\Phi(v_i^T)$ satisfies
\begin{equation}\label{eq:tilterror}
\|b-b_i^T\|+\|v(b)-v_i^T\| \le Ct_i\sqrt{k_i}.
\end{equation}
For the actual stratum-change set
\[
\mathcal S_i
=
\{g:M_g(b_g)\ne M_g(b_{i,g})\},
\]
one has
\begin{equation}\label{eq:swap}
|\mathcal S_i|\le Ck_i,
\qquad
\sum_{g\in\mathcal S_i}
\|v_g(b)-v_{i,g}^0\|^2
\le Ct_i^2k_i.
\end{equation}
\end{lemma}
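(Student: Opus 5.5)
Under $\mathcal A_i\cap\mathcal G_i$ with $t_i\le1$, the plan is to show that the trial point $b_i^T$ has a small subgradient residual for the tilted objective $\widetilde F_i$, and then transfer this to $b$ by strong convexity. We have $g_i^T=R_i^{\mathrm{loss}}+R_i^{\mathrm{prox}}\in\partial\widetilde F_i(b_i^T)$ and $0\in\partial\widetilde F_i(b)$. Since $\widetilde F_i$ is $\mu$-strongly convex, $\mu\|b_i^T-b\|\le\|g_i^T\|$. So the first task is to bound both remainders by $Ct_i\sqrt{k_i}$.

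\textbf{Bounding $R_i^{\mathrm{prox}}$.} We have $\|R_i^{\mathrm{prox}}\|\le L_2K_X^2\|r\|$, so it suffices to control $r$ blockwise.
\begin{itemize}
\item For $g\notin\mathcal B_i$, the segment from $v_{i,g}^0$ to $v_{i,g}^0+u_{i,g}$ has length at most $t_i/4<2t_i$ by \eqref{eq:linearcontrol}. Hence it stays on one side of $\partial K_g$.
\item If the segment lies inside $K_g$, then $\Phi_g$ and $J_g$ vanish along it, so $r_g=0$.
\item If it lies outside, Taylor's theorem with \eqref{eq:bound_on_d2_phi_g} gives $\|r_g\|\le C_\Phi\|u_{i,g}\|^2/2$.
\item For $g\in\mathcal B_i$, the map $\Phi_g$ is $\mu^{-1}$-Lipschitz, since it is a resolvent of a strongly monotone operator, and $\|J_g\|\le\mu^{-1}$. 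Hence $\|r_g\|\le2\mu^{-1}\|u_{i,g}\|\le Ct_i$.
\end{itemize}
Summing, $\|r\|^2\le C|\mathcal B_i|t_i^2+C\sum_g\|u_{i,g}\|^4\le Ct_i^2N_i(2t_i)+Ct_i^2\|u_i\|^2$. Now $\|u_i\|\le|c_i|\|Q_i\|_{\op}\|x_i\|\le CS_i$ on the events, using $\|Q_i\|_{\op}\le C$ (a resolvent-type bound from $H_i\succeq0$ and $0\preceq J_i\preceq\mu^{-1}I$). This gives $\|r\|\le Ct_i\sqrt{N_i(2t_i)+S_i^2}=Ct_i\sqrt{k_i}$.

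\textbf{Bounding $R_i^{\mathrm{loss}}$.} We write $R_i^{\mathrm{loss}}=X_i^\top w$ with $w_j=\ell'(y_j;x_j^\top b_i+\delta_j)-\ell'(y_j;x_j^\top b_i)-\ell''(y_j;x_j^\top b_i)\delta_j$, where $\delta=X_iJ_iu_i=-c_iX_iB_ix_i$. By the $L_3$-Lipschitz property of $\ell''$, $|w_j|\le L_3\delta_j^2/2$. Then
\[
\|w\|\le C\|\delta\|_\infty\|\delta\|\le C(S_ih_n)\,S_i\le Ct_iS_i,
\]
using the $\|X_iB_ix_i\|_\infty\le C_Gh_n$ part of $\mathcal G_i$ and $\|\delta\|\le CS_i$. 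Hence $\|R_i^{\mathrm{loss}}\|\le Ct_i\sqrt{k_i}$. Together with the previous step, this yields $\|b-b_i^T\|\le Ct_i\sqrt{k_i}$.

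\textbf{Dual bound.} For the dual part of \eqref{eq:tilterror}, write
\[
v(b)-v_i^T=-\nabla L_i(b)+\nabla L_i(b_i^T)-g_i^T,
\]
which uses $v(b)=-\nabla L_i(b)-c_ix_i$ and the definition of $g_i^T$. The first two terms are bounded by $L_2K_X^2\|b-b_i^T\|$, which gives \eqref{eq:tilterror}.

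\textbf{Stratum changes, the main obstacle.} We must show $|\mathcal S_i|\le Ck_i$. Take $g\in\mathcal S_i\setminus\mathcal B_i$. The stratum of $\Phi_g(v)$ is determined by which side of $\partial K_g$ the input $v$ lies on, and for $d_g=1$ also by the sign. Since $\|u_{i,g}\|\le t_i/4$, the trial input $v_{i,g}^T$ lies in the same stratum region as $v_{i,g}^0$. So a change between $v_{i,g}^0$ and $v_g(b)$ forces
\[
\|v_g(b)-v_{i,g}^T\|\ge\operatorname{dist}(v_{i,g}^T,\partial K_g)\ge 2t_i-t_i/4\ge t_i.
\]
Here I must check that crossing zero with $d_g=1$ outside $K_g$ is impossible without passing through $K_g$; this holds because $K_g\supseteq B(0,\lambda_1\rho_-)$ separates the two sign regions. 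Hence
\[
|\mathcal S_i\setminus\mathcal B_i|\le t_i^{-2}\|v(b)-v_i^T\|^2\le Ck_i,
\]
and therefore $|\mathcal S_i|\le N_i(2t_i)+Ck_i\le Ck_i$.

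Finally, \eqref{eq:important:upper} combined with \eqref{eq:tilterror}, $|\mathcal S_i|\le Ck_i$ and $\|u_i\|_{2,\infty}\le t_i/4$ gives
\[
\sum_{g\in\mathcal S_i}\|v_g(b)-v_{i,g}^0\|^2\le Ct_i^2k_i+Ck_it_i^2.
\]
The delicate points are the resolvent bound $\|Q_i\|_{\op}\le C$ (I would write $Q_i=I-H_iB_i$ and use $B_i\succeq0$-type structure from the symmetry of the $J_g$) and the geometric separation argument above.
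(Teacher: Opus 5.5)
Your proposal is correct and follows essentially the same route as the paper. You bound the tilted-objective residual $g_i^T=R_i^{\mathrm{loss}}+R_i^{\mathrm{prox}}$ blockwise, using a Taylor expansion off $\mathcal B_i$ and the Lipschitz bound on $\mathcal B_i$, and transfer this to $b$ and $v(b)$ by strong monotonicity. You then count stratum changes outside $\mathcal B_i$ via the threshold $\|v_g(b)-v_{i,g}^T\|>t_i$, giving $\mathcal S_i\subseteq\mathcal B_i\cup E_i$. The only cosmetic difference is that you measure the boundary crossing from $v_{i,g}^T$, whereas the paper measures it from $v_{i,g}^0$.
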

\end{subequations}

The second bound in \eqref{eq:swap} controls the resulting increase in the deleted objective. Combining this lemma with the Lemma~\ref{lem:strong_convexity} gives the following result.

\begin{lemma}\label{lem:bridge}
For either restricted estimator $m_i\in\{m_i^A,m_i^C\}$, the following bound holds on $\mathcal H_i$:
\begin{equation}\label{eq:vectorlocal}
 \|b_i-m_i\|+\|v_i(b_i)-v_i(m_i)\|\le Ct_i\sqrt{k_i}.
\end{equation}
\end{lemma}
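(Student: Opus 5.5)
The bound for $\|b_i-m_i\|$ follows by chaining Lemma~\ref{lem:strong_convexity} with Lemma~\ref{lem:tilt}; the bound on the dual difference $\|v_i(b_i)-v_i(m_i)\|$ then comes from a Lipschitz estimate for $\nabla L_i$. We work on $\mathcal H_i$, which contains $\mathcal A_i\cap\mathcal G_i$, so in particular $\|X_i\|_{\op}\le K_X$. We also have $S_i\le T_n=(D_0h_n)^{-1}$, so $t_i=D_0S_ih_n\le 1$. Hence the standing hypotheses of Lemma~\ref{lem:tilt} ($\mathcal A_i\cap\mathcal G_i$ and $t_i\le1$) and of Lemma~\ref{lem:strong_convexity} ($\|X_i\|_{\op}\le K_X$) are all in force.

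\emph{Coefficient bound.} For $D\in\{A,C\}$, the hybrid point $a_i$ in \eqref{eq:hybrid-introduction} lies in $\mathcal M^\circ(b)\subseteq D(b)$. Lemma~\ref{lem:strong_convexity} therefore gives
\[
\frac{\mu}{2}\|m_i-b_i\|^2\le C\sum_{g\in\mathcal S_i}\|v_g(b)-v^0_{i,g}\|^2 .
\]
The second bound in \eqref{eq:swap} bounds the right side by $Ct_i^2k_i$. Taking square roots yields $\|b_i-m_i\|\le C t_i\sqrt{k_i}$, with a constant depending only on $\mu,L_2,K_X$ and the constant of Lemma~\ref{lem:tilt}.

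\emph{Dual bound.} By definition $v_i(\beta)=-\nabla L_i(\beta)=-X_i^\top(\ell'(y_j;x_j^\top\beta))_{j\ne i}$. Since $0\le\ell''\le L_2$, each map $z\mapsto\ell'(y_j;z)$ is $L_2$-Lipschitz. Hence
\[
\|v_i(b_i)-v_i(m_i)\|\le \|X_i\|_{\op}\,L_2\,\|X_i(b_i-m_i)\|\le L_2K_X^2\|b_i-m_i\|,
\]
which is again bounded by $Ct_i\sqrt{k_i}$. Adding the two bounds gives \eqref{eq:vectorlocal}.

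\emph{Main obstacle.} There is essentially no genuine obstacle here, since the substantive work (the stratum-change count and the dual-displacement control) is contained in Lemma~\ref{lem:tilt}. The only point needing care is checking that the hypotheses match. First, the constant $C$ must not depend on $i$ or $n$, which holds because every constant used is fixed. Second, $\mathcal H_i$ must imply $t_i\le 1$, and this is exactly why $T_n$ was chosen as $(D_0h_n)^{-1}$.
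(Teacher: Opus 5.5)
Your proposal is correct and follows the paper's own route. The paper's proof builds the same hybrid competitor $a_i\in\mathcal M^\circ(b)\subseteq D(b)$ and bounds $\frac{\mu}{2}\|m_i-b_i\|^2$ by $C\sum_{g\in\mathcal S_i}\|v_g(b)-v_{i,g}^0\|^2$, re-deriving the content of Lemma~\ref{lem:strong_convexity} inline through the blockwise proximal energy identity of Lemma~\ref{lem:prox} rather than citing it. It then applies \eqref{eq:swap} and gets the dual bound from the same $L_2K_X^2$ Lipschitz estimate, and your check that $S_i\le T_n$ forces $t_i\le1$ on $\mathcal H_i$ is the hypothesis matching it relies on.
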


\paragraph{Prediction, loss, and projector comparisons.}
The estimate in~\eqref{eq:vectorlocal} controls the Euclidean
coefficient displacement.
The next lemma uses the deleted-data linear responses
$Q_i$ and $B_i$ and the bounds in $\mathcal G_i$ to obtain
the sharper scalar prediction estimate.
Set 
\begin{equation}\label{eq:losslocal-def}
 \Delta\ell_i=\ell(y_i;x_i^\top b_i)-\ell(y_i;x_i^\top m_i).
\end{equation}
The following lemma combines the bounds in \eqref{eq:vectorlocal} with the bounds in event $\mathcal G_i$ to control both the held-out prediction difference $x_i^\top\Delta_i$ and the loss difference $\Delta\ell_i$.

\begin{subequations}
\begin{lemma}\label{lem:prediction}
For either restricted estimator $m_i\in\{m_i^A,m_i^C\}$, define $\Delta_i=b_i-m_i$. Then the following bounds hold on $\mathcal H_i$:
\begin{align}
 |x_i^\top\Delta_i|&\le Ch_nt_i k_i,\label{eq:predictionlocal}\\
 |\Delta\ell_i|+\|\Delta_i\|^2&\le Ct_i^2k_i,\label{eq:losslocal}\\
 \|P-P_i\|_F^2&\le Ck_i.\label{eq:projectorlocal}
\end{align}
\end{lemma}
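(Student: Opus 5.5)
The plan is to derive the three bounds in the order \eqref{eq:projectorlocal}, \eqref{eq:losslocal}, \eqref{eq:predictionlocal}. The first two follow fairly directly from Lemmas~\ref{lem:tilt} and~\ref{lem:bridge}, and the third is the substantive step.

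\textbf{Projector and loss bounds.} The projectors $P$ and $P_i$ are block-diagonal $0/1$ matrices. A block contributes to $P-P_i$ only if exactly one of $b_g,b_{i,g}$ vanishes, and then $M_g(b_g)\ne M_g(b_{i,g})$, so $g\in\mathcal S_i$. Hence
\[
\|P-P_i\|_F^2\le d_0|\mathcal S_i|\le Ck_i
\]
by \eqref{eq:swap}. The same argument applies verbatim to $I_p-P$ and $I_p-P_i$.

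For \eqref{eq:losslocal}, the bound $\|\Delta_i\|^2\le Ct_i^2k_i$ is immediate from \eqref{eq:vectorlocal}. For the loss difference, I Taylor-expand $\ell(y_i;\cdot)$ around $x_i^\top b_i$ and use $0\le\ell''\le L_2$:
\[
|\Delta\ell_i|\le |\ell'(y_i;x_i^\top b_i)|\,|x_i^\top\Delta_i|+\tfrac{L_2}{2}|x_i^\top\Delta_i|^2 .
\]
In the first term I bound $|\ell'(y_i;x_i^\top b_i)|\le S_i$ and insert \eqref{eq:predictionlocal}. This gives $S_ih_nt_ik_i=t_i^2k_i/D_0$, since $t_i=D_0S_ih_n$. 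In the second term I bound $|x_i^\top\Delta_i|\le K_x\|\Delta_i\|$ and use \eqref{eq:vectorlocal}. So everything reduces to \eqref{eq:predictionlocal}.

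\textbf{Prediction bound (main obstacle).} The naive estimate $|x_i^\top\Delta_i|\le K_x\|\Delta_i\|\lesssim t_i\sqrt{k_i}$ is too weak, because we need an extra factor $h_n\sqrt{k_i}$. The plan is to derive a linearized equation
\[
(I+J_iH_i)\Delta_i=e+\rho .
\]
Here $e$ is supported on a block set $E$ with $|E|\le Ck_i$ and satisfies $\|e\|\le Ct_i\sqrt{k_i}$, and $\rho$ collects quadratic remainders.

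To obtain it, set $w=v_i(b_i)-v_i(m_i)$, so that $\|w\|\le Ct_i\sqrt{k_i}$ by Lemma~\ref{lem:bridge}. Take $E=\mathcal S_i\cup\mathcal B_i$, whose size is at most $Ck_i$ by Lemma~\ref{lem:tilt}.
\begin{itemize}
\item For $g\notin E$, both $b_{i,g}$ and $m_{i,g}$ lie in the same open stratum, the normal-cone term of $D(b)$ is inactive, and $v_{i,g}^0$ is at distance more than $2t_i$ from $\partial K_g$. Then $\Delta_g=\Phi_g(v^0_{i,g})-\Phi_g(v^0_{i,g}-w_g)=J_g(v^0_{i,g})w_g+O(C_\Phi\|w_g\|^2)$.
\item For $g\in E$, I simply place $\Delta_g-J_gw_g$ into $e_g$. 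Its norm is at most $C(\|\Delta_g\|+\|w_g\|)$, so $\|e\|\le Ct_i\sqrt{k_i}$.
\end{itemize}
Expanding $w=H_i\Delta_i+R^{\rm loss}$ then gives the displayed equation. The remainder $\rho$ consists of $J_iR^{\rm loss}$, where $R^{\rm loss}=X_i^\top\varrho$ with $|\varrho_j|\le L_3(x_j^\top\Delta_i)^2$, together with blockwise prox remainders of size $C\|w_g\|^2$.

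Since $J_i$ and $H_i$ are symmetric, $(I+J_iH_i)^{-\top}=Q_i$. This gives
\[
x_i^\top\Delta_i=(Q_ix_i)^\top(e+\text{prox remainder})+(X_iB_ix_i)^\top\varrho .
\]
On $\mathcal G_i$ I bound the three pieces separately:
\begin{itemize}
\item $|(Q_ix_i)^\top e|\le\|Q_ix_i\|_{2,\infty}\sqrt{|E|}\,\|e\|\le Ch_nt_ik_i$;
\item the prox remainder contributes at most $Ch_n\sum_g\|w_g\|^2\le Ch_nt_i^2k_i$;
\item $|(X_iB_ix_i)^\top\varrho|\le\|X_iB_ix_i\|_\infty\|\varrho\|_1\le Ch_nL_3K_X^2\|\Delta_i\|^2\le Ch_nt_i^2k_i$.
\end{itemize}
Using $t_i\le1$, all three are at most $Ch_nt_ik_i$.

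The delicate points are twofold. First, one must check that the normal cone of $D(b)$ only acts on blocks in $E$; this holds because off $\mathcal S_i$ the strata of $b$ and $b_i$ agree. Second, one must check that $m_{i,g}$ stays in the same stratum off $E$. For this I would use $\|w_g\|\le t_i$ (from the $\|\cdot\|_{2,\infty}$ bounds as in Lemma~\ref{lem:tilt}) together with $\operatorname{dist}(v^0_{i,g},\partial K_g)>2t_i$.
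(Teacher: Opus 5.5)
Your projector and loss arguments, and the overall structure of your prediction bound, coincide with the paper's proof: you linearize $\Delta_g$ through $J_g(v^0_{i,g})$ off an exceptional set, put the exceptional blocks into $e$, invert $I+J_iH_i$, and pair with $Q_ix_i$ and $X_iB_ix_i$. (Minor sign slip: the expansion should read $w=-H_i\Delta_i+X_i^\top\varrho$; with your sign you would get $I-J_iH_i$.)

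There is, however, a genuine gap in the step you flagged as delicate. The blockwise bound $\|w_g\|\le t_i$ is not available. Lemma~\ref{lem:tilt} gives $\|\cdot\|_{2,\infty}$ control only for the trial perturbation $u_i=-c_iQ_ix_i$, and that control comes from the event $\mathcal G_i$. For $w=v_i(b_i)-v_i(m_i)$, the only information is the Euclidean bound $\|w\|\le Ct_i\sqrt{k_i}$ from Lemma~\ref{lem:bridge}. That bound permits individual blocks of size about $t_i\sqrt{k_i}\gg t_i$.

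On such a block outside $\mathcal S_i\cup\mathcal B_i$, the point $v_{i,g}(m_i)=v^0_{i,g}-w_g$ may lie across $\partial K_g$. For example, if $g\notin S(b)$, the constraint forces $m_{i,g}=0$ while $\Phi_g(v_{i,g}(m_i))\neq 0$. Your identity $\Delta_g=\Phi_g(v^0_{i,g})-\Phi_g(v^0_{i,g}-w_g)$ is then false, because the normal cone of $D(b)$ is active there. The same can happen for an active singleton block under $D=C$. Whenever the segment crosses $\partial K_g$, you also lose the quadratic Taylor remainder. Note that agreement of the strata of $b$ and $b_i$ says nothing by itself about where $m_{i,g}$ sits, so your stated reason for the normal-cone term being inactive is insufficient on its own.

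The repair is the same counting trick that Lemma~\ref{lem:tilt} uses for $E_i$. Enlarge the exceptional set by $E'=\{g:\|w_g\|>t_i\}$, which satisfies $|E'|\le t_i^{-2}\|w\|^2\le Ck_i$. Putting these blocks into $e$ costs nothing, since $|E|\le Ck_i$ and $\|e\|\le C(\|\Delta_i\|+\|w\|)\le Ct_i\sqrt{k_i}$ still hold.

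Off the enlarged set, the segment from $v^0_{i,g}$ to $v_{i,g}(m_i)$ stays at distance greater than $t_i$ from $\partial K_g$. This yields the proximal identity $m_{i,g}=\Phi_g(v_{i,g}(m_i))$ in each case:
\begin{itemize}
\item for inactive blocks, $v_{i,g}(m_i)\in\operatorname{int}K_g$, so $\Phi_g(v_{i,g}(m_i))=0=m_{i,g}$;
\item for active singleton blocks in $C(b)$, $v_{i,g}(m_i)$ lies strictly beyond $\kappa_g^\pm$, which excludes $m_{i,g}=0$, and strong monotonicity then identifies $m_{i,g}$.
\end{itemize}
It also gives the quadratic remainder bound $(C_\Phi/2)\|w_g\|^2$. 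This is exactly the paper's choice $D_i=\mathcal B_i\cup E_i\cup E_i'$, and with it your three pairing estimates go through unchanged.
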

\end{subequations}

Substituting $t_i=D_0S_ih_n$ from \eqref{eq:linearcontrol} and $k_i=N_i(2t_i)+S_i^2$ from \eqref{eq:define-setB-ki} makes the dependence on the held-out score and the boundary count explicit:
\begin{align}
 |\Delta\ell_i|+\|\Delta_i\|^2
 &\le Ch_n^2\{S_i^2N_i(2D_0S_ih_n)+S_i^4\},\label{eq:localsummaryloss}\\
 |x_i^\top\Delta_i|
 &\le Ch_n^2\{S_iN_i(2D_0S_ih_n)+S_i^3\},\label{eq:localsummarypred}\\
 p^{-1}\|P-P_i\|_F^2
 &\le Cp^{-1}\{N_i(2D_0S_ih_n)+S_i^2\}.\label{eq:localsummaryproj}
\end{align}
These bounds hold on $\mathcal H_i$ and involve powers of $S_i$ together with the boundary count at the random width $2D_0S_ih_n$.

So far, our arguments have been deterministic under certain events. 
The goal of the remaining sections is to remove the conditioning arguments and obtain bounds on the expectations uniformly over the deletion index \(i\), thereby establishing \eqref{eq:mainloss}, \eqref{eq:mainprediction}, and \eqref{eq:mainprojector}.
The LO comparison in \eqref{eq:mainrisk} then follows by averaging the loss differences. The rest of the proof proceeds in two parts. \\


\noindent\textbf{Section~\ref{sec:integration}: Expected errors on $\mathcal H_i$.}
To take expectations of the deterministic comparisons, we control the random boundary count $N_i(2D_0S_ih_n)$ and the accompanying powers of $S_i$.
At a fixed width $r$, the boundary count $N_i(r)$ depends only on the remaining observations.
The random width $2D_0S_ih_n$ introduces additional dependence on the omitted observation through $S_i$.
We combine fixed-width boundary-count estimates with conditional bounds for $S_i$ to obtain expectation bounds for the four comparison quantities on $\mathcal H_i$.\\

\noindent\textbf{Section 3.4: Exceptional events and completion of the proof.}
We control the expectations of the four quantities on $\mathcal H_i^c$ and combine them with the bounds on $\mathcal H_i$ to establish the uniform expectation bounds in the main theorem.
Finally, averaging the held-out loss differences gives the LO comparison, and Markov's inequality yields the stated stochastic order.

\subsection{Expected error bounds on $\mathcal H_i$}\label{sec:integration}

Section~\ref{sec:local} establishes deterministic bounds on the coefficient, prediction, and loss differences on $\mathcal H_i$. These bounds involve the held-out score $S_i$ and the boundary count $N_i(2t_i)$, both of which depend on the sampled data and are therefore random.
To take expectations of these error bounds over the data, we need to control these random quantities on \(H_i\), as well as the contribution from \(H_i^c\). This subsection addresses the first task

\paragraph{Boundary counts on $\mathcal H_i$.}

We first consider the randomness in the boundary count $N_i(2t_i)$.
At a fixed width $r>0$, the boundary count $N_i(r)$ defined in \eqref{eq:def-boundarycount} depends only on the remaining observations. The random width $2t_i$ introduces additional dependence on the omitted observation through $S_i$. We therefore first establish an expectation bound for $N_i(r)$ at each fixed width, and later combine this bound with the conditional bounds for $S_i$ to handle $N_i(2t_i)$.
The following lemma gives the required fixed-width expectation bound.

\begin{lemma}[Boundary counts at fixed widths]
\label{lem:antic}
With $\mathcal A_i$ as defined in \eqref{eq:definition-of-a_i}, the following bound holds for every deterministic $r>0$, uniformly in $i$:
\begin{equation}\label{eq:fixedwidth}
\E[N_i(r)\ind_{\mathcal A_i}]\le Cpr.
\end{equation}
For small $r$, each block satisfies
$\Prob\{\dist(v_{i,g}^0,\partial K_g)\le r,
\mathcal A_i\}\le Cr$.
\end{lemma}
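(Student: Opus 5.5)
The first bound follows from the per-block bound by summing over blocks: since $N_i(r)=\sum_g\ind\{\dist(v_{i,g}^0,\partial K_g)\le r\}$ and $G\le p$, it suffices to show that for every block $g$, $\Prob\{\dist(v_{i,g}^0,\partial K_g)\le r,\ \mathcal A_i\}\le Cr$ with $C$ uniform in $g$ and $i$. For $r$ bounded below by a constant this is trivial, so we may take $r$ small. The plan is a conditional anti-concentration argument in the design column block $X_{\cdot,g}$ (restricted to the rows $j\neq i$), holding everything else fixed.

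\textbf{Step 1: leave-one-block-out representation.} Fix $g$ and let $\breve b=\breve b_{i,-g}^{(-g)}$ be the zero-clamped deleted estimator from Lemma~\ref{lem:stability}, with residual scores $\breve\psi=(\ell'(y_j;x_j^\top\breve b))_{j\ne i}$. Since $\Sigma$ is not diagonal, first decompose the Gaussian rows as $x_{j,g}=\Gamma_g x_{j,-g}+w_{j}$. Here $w_j\sim N(0,\Omega_g)$ is independent of $x_{j,-g}$, and $\Omega_g$ is the Schur complement, with eigenvalues of order $1/p$ by Assumption~\ref{ass:design}. We work in the coordinates in which the block-$g$ design is the independent part $W=(w_j)_{j\ne i}$.

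The candidate dual input at block $g$ with $\beta_g=0$ is $\breve v_g=-\sum_{j\ne i}x_{j,g}\breve\psi_j$. By the KKT conditions, $b_{i,g}=0$ iff $\breve v_g\in K_g$, in which case $v_{i,g}^0=\breve v_g$. When $\breve v_g\notin K_g$, the displacement $v_{i,g}^0-\breve v_g$ is controlled by a standard leave-one-column-out correction. Handling the direction of the reparametrization through $\Gamma_g$ is a bookkeeping point; we may absorb $\Gamma_g x_{j,-g}$ into the other blocks' coordinates, since the penalty only acts on $\beta_g$ in block $g$.

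\textbf{Step 2: conditional Gaussian law and anti-concentration.} Conditionally on $\mathcal F_g=\sigma(y,\ X_{\cdot,-g})$, the estimator $\breve b$ and the scores $\breve\psi$ are fixed. One caveat is that $y$ depends on $x_{j,g}$ through $x_j^\top\beta^*$; this is exactly where Assumption~\ref{ass:likelihood} enters. Conditioning on $y$ tilts the Gaussian law of $W$ by $\prod_j q(y_j\mid x_j^\top\beta^*)$. By \eqref{eq:likelihood}, this tilted law is log-concave relative to a Gaussian with covariance inflated by at most a constant factor, i.e.\ it is ``$C_q$-semi-log-concave''. Under the conditional law, $\breve v_g$ is then an affine image of $W$ in the direction $\breve\psi$, with variance roughly $\|\breve\psi\|^2/p$.

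On $\mathcal A_i$, \eqref{eq:scoresandwich} gives $\|\breve\psi\|^2\ge c\|\eta_{-i}\|^2\ge c n v_-/2$, so the standard deviation of $\breve v_g$ is of order one. Since $\partial K_g$ is a $(d_g-1)$-dimensional boundary of a convex set containing $B(0,\lambda_1\rho_-)$ and contained in $B(0,\lambda_1\rho_+)$, its $r$-neighbourhood has Lebesgue volume at most $C r$. A density bound for $\breve v_g$ (bounded conditional density in dimension $d_g\le d_0$) then gives $\Prob\{\dist(\breve v_g,\partial K_g)\le r\mid\mathcal F_g\}\le Cr$.

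\textbf{Step 3: transfer to $v^0_{i,g}$.} In the inactive case $v_{i,g}^0=\breve v_g$ and there is nothing to transfer. In the active case, $\dist(v_{i,g}^0,\partial K_g)$ is comparable to $\|\Phi_g(v_{i,g}^0)\|=\|b_{i,g}\|$, and $v_{i,g}^0$ is a smooth, monotone (Jacobian bounded below by the $\mu$-strong convexity) function of $\breve v_g$ through the one-block fixed-point equation. So the event $\dist(v^0_{i,g},\partial K_g)\le r$ is contained in $\dist(\breve v_g,\partial K_g)\le C r$. The indicator of $\mathcal A_i$ is handled by bounding it by the event $\|\breve\psi\|^2\gtrsim n$, which is $\mathcal F_g$-measurable up to constants via \eqref{eq:scoresandwich} applied to $X_{i,-g}$.

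\textbf{Main obstacle.} The hardest step is Step 2's conditional density bound. We would first try to bound the density of a linear functional of a semi-log-concave vector, via the Prékopa–Leindler/Brascamp–Lieb variance bounds, checking that the lower tilt $-C_q$ combined with a Gaussian precision of order $p$ still leaves a positive-definite precision so that log-concavity holds. The remaining care is the nondiagonal $\Sigma$ reparametrization, keeping all constants uniform in $g$.
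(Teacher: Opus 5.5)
Your architecture matches the paper's proof. You clamp block $g$ and condition on $(y_j,x_{j,-g})_{j\ne i}$, which freezes the clamped scores $\breve\psi$. You transfer the boundary event from $v_{i,g}^0$ to $\breve v_g=-\sum_{j\ne i}\breve\psi_jx_{j,g}$ at cost $O(r)$, and lower-bound $\|\breve\psi\|^2\gtrsim n$ via \eqref{eq:scoresandwich} on a conditionally measurable enlargement of $\mathcal A_i$. Finally you combine a conditional sup-density bound with a tube-volume estimate before summing over $G\le p$ blocks.

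The gap is in Step~2, which is the heart of the lemma. Given $(y_j,x_{j,-g})$, the block $x_{j,g}$ has density $f_j(u)\propto\varphi_g(u\mid x_{j,-g})\,q(y_j\mid x_{j,-g}^\top\beta^*_{-g}+u^\top\beta^*_g)$, so
\[
-D^2\log f_j(u)=\Omega_g^{-1}-\bigl[\partial_s^2\log q(y_j\mid s)\bigr]_{s=x_{j,-g}^\top\beta^*_{-g}+u^\top\beta^*_g}\,\beta_g^*(\beta_g^*)^\top .
\]
Condition \eqref{eq:likelihood} bounds $\partial_s^2\log q$ from \emph{below}, and hence bounds this precision from \emph{above}: $-D^2\log f_j\preceq\Omega_g^{-1}+C_q\beta_g^*(\beta_g^*)^\top\preceq CpI$. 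It places no limit on positive values of $\partial_s^2\log q$. Such values occur in the paper's Student-$t$ example whenever $|y-u|>\sqrt\nu$, and they can be arbitrarily large for general $q$. Since $\|\beta_g^*\|^2$ may be as large as $B^2p$, the same order as $\Omega_g^{-1}$, the tilted law need not be log-concave, let alone log-concave relative to an inflated Gaussian. So the positive-definiteness check you plan cannot succeed under the stated assumptions. Without log-concavity, a conditional variance of order one does not bound the density, and Pr\'ekopa--Leindler and Brascamp--Lieb are unavailable. Brascamp--Lieb bounds variances from above anyway, which is the wrong direction for anti-concentration.

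The missing ingredient is a sup-norm density bound for a weighted sum of independent vectors that uses only the upper bound $-D^2\log f_j\preceq\gamma_jI$, which is what the assumption actually supplies. This is Lemma~\ref{lem:density}, and it goes as follows:
\begin{enumerate}
\item Shift the $j$th summand by $\pm\theta_jh$ with $\sum_j\theta_j=1$ and apply the one-sided Taylor bound.
\item Average the two signs, so that $\cosh\ge1$ removes the gradient terms.
\item Integrate over the convolution fiber. This gives $\tfrac12\{f(t+h)+f(t-h)\}\ge f(t)e^{-\Gamma\|h\|^2/2}$ with $\Gamma=\sum_j\gamma_j\theta_j^2$.
\item Integrate in $h$ to obtain $\|f\|_\infty\le(\Gamma/2\pi)^{d/2}$.
\end{enumerate}
For the summands $\breve\psi_jx_{j,g}$ with $\breve\psi_j\ne0$ one has $\gamma_j=Cp/\breve\psi_j^2$. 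Choosing $\theta_j=\breve\psi_j^2/\|\breve\psi\|^2$ gives $\Gamma=Cp/\|\breve\psi\|^2\le C$ on the energy event, which is exactly the uniform bound your tube argument needs.

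Two smaller points. First, the $\Gamma_g$ reparametrization is unnecessary, because the conditional Gaussian mean shift is irrelevant for a sup-density bound. Absorbing $\Gamma_gx_{j,-g}$ into the other coordinates would in fact destroy the block separability of the penalty. Second, Step~3 needs no monotone-map structure. The condition $\dist(v_{i,g}^0,\partial K_g)\le r$ gives $\|b_{i,g}\|\le r/\mu$, so the clamped KKT residual at $b_{i,-g}$ is $O(r)$ on $\mathcal A_i$, and hence $\|\breve v_g-v_{i,g}^0\|\le Cr$.
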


We next establish conditional bounds for $S_i$. Recall that
\[
\mathcal D_{-i} = \sigma((y_j,x_j):j\ne i)
\]
is the $\sigma$-field generated by all observations except observation $i$.
Since $b_i$ is fitted using only these observations, it is $\mathcal D_{-i}$-measurable.
The omitted pair $(y_i,x_i)$ is independent of $\mathcal D_{-i}$.
Thus, conditional on $\mathcal D_{-i}$, $b_i$ is fixed, and the randomness in $S_i$ comes from the omitted pair.


\begin{subequations}
\begin{lemma}[Conditional bounds for $S_i$]\label{lem:tail}
There are deterministic functions $q_n:[1,\infty)\to[0,\infty)$, independent of $i$, such that, for every $i\in\{1,\ldots,n\}$ and $u\ge1$, the following bounds hold almost surely on $\mathcal A_i$:
\begin{equation}\label{eq:conditionaltail}
 \Prob(S_i>u\mid\mathcal D_{-i})\le q_n(u),\qquad
 \E[S_i^3\mid\mathcal D_{-i}]\le C,
\end{equation}
and
\begin{equation}\label{eq:tailsum}
 \sup_n\sum_{k=0}^\infty2^{3k}q_n(2^k)<\infty.
\end{equation}
For sufficiently large fixed $u_0$, one can use $q_n(u)=1$ on $[1,u_0]$ and
\begin{equation}\label{eq:expanded-display-013}
q_n(u)=\Prob(|\eta_1|>u/4)+2e^{-c u^2},\qquad u>u_0,
\end{equation}
after changing fixed constants if needed.
\end{lemma}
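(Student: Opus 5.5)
Conditional on $\mathcal D_{-i}$, the deleted fit $b_i$ is fixed, and $(y_i,x_i)$ is an independent draw from $q(y\mid x^\top\beta^*)p(x)$. The plan is to control $|\ell'(y_i;x_i^\top b_i)|$ by comparing it with the null score $\eta_i=\ell'(y_i;0)$. Since $|\ell''|\le L_2$,
\[
S_i\le 1+|\eta_i|+L_2|x_i^\top b_i|.
\]
Conditionally, $x_i^\top b_i\sim N(0,b_i^\top\Sigma b_i)$ with variance at most $(C_\Sigma/p)\|b_i\|^2$. On $\mathcal A_i$, the norm bound \eqref{eq:normbound} of Lemma~\ref{lem:stability} with $T=\{j\ne i\}$ and all blocks free gives
\[
\|b_i\|\le\mu^{-1}\|X_i\|_{\op}\|\eta_{-i}\|\le \mu^{-1}K_X\sqrt{(n-1)2v_+}.
\]
Because $n/p\le\delta_+$, the conditional variance $\sigma_i^2:=b_i^\top\Sigma b_i$ is therefore bounded by a fixed constant $\bar\sigma^2$ on $\mathcal A_i$.

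The union bound gives
\[
\Prob(S_i>u\mid\mathcal D_{-i})\le \Prob(|\eta_i|>u/4\mid\mathcal D_{-i})+\Prob(L_2|x_i^\top b_i|>u/4\mid\mathcal D_{-i})
\]
for $u\ge 2$, so that $1\le u/2$ and the three pieces split as $u/2+u/4+u/4$. The Gaussian term is at most $2e^{-u^2/(32L_2^2\bar\sigma^2)}$, which is $2e^{-cu^2}$. The first term is where the real obstacle lies. Conditional on $x_i^\top b_i$, the law of $y_i$ depends on $x_i^\top\beta^*$, which is correlated with $x_i^\top b_i$, but $(y_i,x_i)$ as a whole is independent of $\mathcal D_{-i}$. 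Hence $\Prob(|\eta_i|>u/4\mid\mathcal D_{-i})=\Prob(|\eta_1|>u/4)$ exactly: $\eta_i$ is a function of $y_i$ alone and is independent of $\mathcal D_{-i}$. I would use this marginal bound directly, not the joint conditional law, so the correlation never enters. This yields \eqref{eq:expanded-display-013} for $u>u_0$ with $u_0=2$ (or larger), and I set $q_n=1$ on $[1,u_0]$. The resulting $q_n$ does not even depend on $n$, and it is deterministic and independent of $i$ because the $(y_j,x_j)$ are identically distributed.

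For the conditional moment bound, I would write $S_i^3\le 9(1+|\eta_i|^3+L_2^3|x_i^\top b_i|^3)$. Taking conditional expectations gives at most $9(1+K_3+L_2^3\,\E|N(0,1)|^3\bar\sigma^3)$, again using independence for the $\eta_i$ term and Gaussianity for the other. For \eqref{eq:tailsum}, the Gaussian part contributes $\sum_k 2^{3k}e^{-c4^k}<\infty$. For the score part, $\sum_{k}2^{3k}\Prob(|\eta_1|>2^{k-2})\le C\,\E|\eta_1|^3+C$, by the standard dyadic comparison: $\sum_k 2^{3k}\ind\{|\eta|>2^{k-2}\}\le C(1+|\eta|^3)$ pointwise. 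Finitely many $k$ with $2^k\le u_0$ contribute a constant. All constants involve only $K_3,L_2,\mu,K_X,C_\Sigma,\delta_+,v_+$, so the supremum over $n$ is finite.
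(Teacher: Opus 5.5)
Your proof is correct and follows essentially the same route as the paper: the envelope $S_i\le 1+|\eta_i|+L_2|x_i^\top b_i|$, a union bound that uses the marginal law of $\eta_i$ and the conditional Gaussian law of $x_i^\top b_i$ (with variance bounded on $\mathcal A_i$ via \eqref{eq:normbound}), the cube inequality for the conditional third moment, and a pointwise dyadic comparison that sums the score tail against $\E|\eta_1|^3$. The paper splits at $u\ge4$ with thresholds $u/4,u/2$ rather than your $u\ge2$ with $u/4,u/4$, which is immaterial. One small correction: $q_n$ can genuinely depend on $n$, because the law of $\eta_1$ changes with $\beta^*$ and $\Sigma$; this is harmless, since your summability bound uses only the uniform constant $K_3$.
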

\end{subequations}

Combining the conditional bounds for $S_i$ on $\mathcal A_i$ in Lemma~\ref{lem:tail} with the fixed-width expectation bound in Lemma~\ref{lem:antic}, we obtain the following expectation bounds.

\begin{lemma}[Weighted boundary counts at random widths]
\label{lem:dyadic}
For every fixed $a_0>0$ and each $q\in\{0,1,2\}$, the following bound holds uniformly in $i$:
\begin{equation}\label{eq:weightedcount}
\E\!\left[ S_i^q N_i(a_0S_ih_n)\ind_{\mathcal A_i} \right]
\le Cph_n,
\end{equation}
where $C$ may depend on $a_0$, but not on $n$ or $i$.
\end{lemma}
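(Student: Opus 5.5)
We prove the bound by a dyadic decomposition of the held-out score $S_i$, which decouples the random width $a_0S_ih_n$ from the omitted observation. Since $N_i(r)$ is nondecreasing in $r$ and $S_i\ge1$, we split according to $S_i\in[2^k,2^{k+1})$ for $k\ge0$. On this shell,
\[
S_i^qN_i(a_0S_ih_n)\le 2^{q(k+1)}N_i(a_02^{k+1}h_n).
\]
Here the width $r_k=a_02^{k+1}h_n$ is deterministic. Hence
\[
\E\!\left[S_i^qN_i(a_0S_ih_n)\ind_{\mathcal A_i}\right]\le\sum_{k\ge0}2^{q(k+1)}\,\E\!\left[N_i(r_k)\ind_{\mathcal A_i}\ind\{S_i\ge2^k\}\right].
\]

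Next we condition on $\mathcal D_{-i}$. Both $N_i(r_k)$ and $\mathcal A_i$ are $\mathcal D_{-i}$-measurable: $v_i^0=-\nabla L_i(b_i)$, $X_i$ and $V_i$ involve only the retained observations. Lemma~\ref{lem:tail} then gives
\[
\E\!\left[N_i(r_k)\ind_{\mathcal A_i}\ind\{S_i\ge2^k\}\right]
=\E\!\left[N_i(r_k)\ind_{\mathcal A_i}\Prob(S_i\ge2^k\mid\mathcal D_{-i})\right]
\le q_n(2^k)\,\E\!\left[N_i(r_k)\ind_{\mathcal A_i}\right].
\]
For $k=0$ we use the trivial bound $q_n\le1$. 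Strictly, $\Prob(S_i\ge 2^k)$ versus $\Prob(S_i>2^k)$ needs a small adjustment: use $q_n(2^{k-1})$ for $k\ge1$, i.e.\ index the shell as $S_i\in(2^{k-1},2^k]$. This only shifts constants, by a factor $2^{3}$ in the summability.

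Lemma~\ref{lem:antic} at the deterministic width $r_k$ bounds the last expectation by $Cpr_k=Ca_0p2^{k+1}h_n$. Summing,
\[
\E\!\left[S_i^qN_i(a_0S_ih_n)\ind_{\mathcal A_i}\right]\le C a_0 p h_n\sum_{k\ge0}2^{(q+1)(k+1)}q_n(2^{k-1}).
\]
For $q\le2$ the exponent satisfies $(q+1)\le3$, so the series is dominated by a constant multiple of $\sum_k2^{3k}q_n(2^k)$. That sum is bounded uniformly in $n$ by \eqref{eq:tailsum}, which yields \eqref{eq:weightedcount} with $C$ depending on $a_0$ only.

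The main point requiring care is the measurability and conditioning step. Lemma~\ref{lem:tail} holds only almost surely on $\mathcal A_i$, so $\mathcal A_i$ must be $\mathcal D_{-i}$-measurable; it is, since $\|X_i\|_{\op}$ and $V_i$ exclude observation $i$. Lemma~\ref{lem:antic} must also be applied at widths that are not small. Its bound $Cpr$ is stated for every deterministic $r>0$, and in any case $N_i\le G\le p$, so large $k$ pose no issue.

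The exponent bookkeeping is the real constraint. We pay $2^{qk}$ from the weight and $2^k$ from the width, so the tail summability must hold with power $2^{(q+1)k}$. This is exactly why $q$ is limited to $\{0,1,2\}$ and why only a third moment is needed.
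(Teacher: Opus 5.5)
Your proof is correct and follows essentially the same route as the paper. Both arguments decompose $S_i$ dyadically into deterministic widths $a_02^{k+1}h_n$, condition on $\mathcal D_{-i}$ using the inclusion $\{S_i\ge2^k\}\subseteq\{S_i>2^{k-1}\}$ to invoke the envelope $q_n(2^{k-1})$, apply Lemma~\ref{lem:antic} at each fixed width, and sum via \eqref{eq:tailsum} using $q+1\le3$. The only loose end is notational: the $k=0$ term should be bounded by dropping the indicator, as you indicate, rather than written with the undefined $q_n(2^{-1})$ in your final series.
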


Taking $a_0=2D_0$, where $D_0$ is the fixed constant chosen in Section~\ref{sec:local}, and recalling $t_i=D_0S_ih_n$, we obtain
\[
\E\!\left[
S_i^q N_i(2t_i)\ind_{\mathcal A_i}
\right]
\le Cph_n,
\qquad q\in\{0,1,2\}.
\]
Since $\mathcal H_i\subseteq\mathcal A_i$, this provides the expectation bounds for the weighted boundary counts appearing in the deterministic comparisons.

\paragraph{Error bounds on $\mathcal H_i$.}
We now use these bounds to control the loss, coefficient, and prediction errors on $\mathcal H_i$.
For either restricted estimator $m_i\in\{m_i^A,m_i^C\}$, define
\begin{equation}\label{eq:expanded-display-014}
W_i = |\Delta\ell_i|+\|\Delta_i\|^2+|x_i^\top\Delta_i|.
\end{equation}
In Lemma~\ref{lem:prediction}, we have established the deterministic bound for the components of $W_i$. Combining \eqref{eq:localsummaryloss} and \eqref{eq:localsummarypred}, and using $S_i\ge1$, gives
\begin{align}
\label{eq:main-local-loss-expectation}
\E[(|\Delta\ell_i|+\|\Delta_i\|^2)\ind_{\mathcal H_i}]
&\le 
Ch_n^2\E[S_i^2N_i(2D_0S_ih_n)\ind_{\mathcal H_i}]
+Ch_n^2\E[S_i^4\ind_{\mathcal H_i}].
\end{align}
For the first term, we use $\mathcal H_i\subseteq\mathcal A_i$ and apply Lemma~\ref{lem:dyadic} with $q=2$ and $a_0=2D_0$ to obtain
\begin{equation}\label{eq:main-weighted-boundary-term}
 h_n^2\E[S_i^2N_i(2D_0S_ih_n)\ind_{\mathcal H_i}]
 \le h_n^2\E[S_i^2N_i(2D_0S_ih_n)\ind_{\mathcal A_i}]
 \le Cph_n^3.
\end{equation}
For the second term, we use $S_i\le T_n$ on $\mathcal H_i$.
Since $\mathcal A_i$, defined in \eqref{eq:definition-of-a_i}, is $\mathcal D_{-i}$-measurable, the conditional third-moment bound in \eqref{eq:conditionaltail} yields
\begin{equation}\label{eq:main-truncated-fourth-power}
 h_n^2\E[S_i^4\ind_{\mathcal H_i}]
 \le h_n^2T_n\E[S_i^3\ind_{\mathcal A_i}]=\frac{h_n}{D_0}\E[\ind_{\mathcal A_i}\E(S_i^3\mid\mathcal D_{-i})]
 \le Ch_n.
\end{equation}
A similar argument for the prediction error, using \eqref{eq:localsummarypred}, Lemma~\ref{lem:dyadic} with $q=1$ and $a_0=2D_0$, and \eqref{eq:conditionaltail}, gives
\begin{align}\label{eq:main-local-prediction-final}
\E[|x_i^\top\Delta_i|\ind_{\mathcal H_i}]
\le Ch_n^2\E[S_iN_i(2D_0S_ih_n)\ind_{\mathcal A_i}]
+Ch_n^2\E[S_i^3\ind_{\mathcal A_i}]
\le Cph_n^3+Ch_n^2.
\end{align}
Combining these results, we obtain, uniformly in $i$,
\begin{equation}\label{eq:good-event-error}
\E[W_i\ind_{\mathcal H_i}]
\le C(ph_n^3+h_n^2+h_n).
\end{equation}

We also bound the projector discrepancy $p^{-1}\|P-P_i\|_F^2$ on $\mathcal H_i$.
We combine the deterministic bound in \eqref{eq:localsummaryproj} from Lemma~\ref{lem:prediction}, the expectation bound in \eqref{eq:weightedcount} from Lemma~\ref{lem:dyadic}, applied with $q=0$ and $a_0=2D_0$, and the conditional third-moment bound in \eqref{eq:conditionaltail} from Lemma~\ref{lem:tail}.
Using $\mathcal H_i\subseteq\mathcal A_i$ and $S_i^2\le S_i^3$, we obtain
\begin{equation}\label{eq:main-local-projector}
\frac1p\E[\|P-P_i\|_F^2\ind_{\mathcal H_i}]
\le
\frac Cp\E[N_i(2D_0S_ih_n)\ind_{\mathcal A_i}]
+
\frac Cp\E[S_i^2\ind_{\mathcal A_i}]
\le C(h_n+p^{-1}).
\end{equation}

\subsection{Exceptional events and completion of the proof}\label{sec:integration2}

The previous subsection establishes an expectation bound for $W_i$ on $\mathcal H_i$. We now consider the exceptional event $\mathcal H_i^c$ and control the expectation $\E[W_i\ind_{\mathcal H_i^c}]$. Then we combine the two bounds to obtain the loss, coefficient, and prediction bounds in the main theorem.

\paragraph{The remaining exceptional events}
To control the expected errors outside $\mathcal H_i$, we use the disjoint decomposition
\[
\mathcal H_i^c
=
\mathcal A_i^c
\,\dot\cup\,
(\mathcal A_i\cap\mathcal G_i^c)
\,\dot\cup\,
(\mathcal A_i\cap\mathcal G_i\cap\{S_i>T_n\}).
\]
The first event corresponds to failure of the deleted-data conditions defining $\mathcal A_i$.
The second corresponds to failure of the Gaussian bounds defining $\mathcal G_i$ while $\mathcal A_i$ holds.
The third corresponds to $S_i$ exceeding the cutoff $T_n$ while both $\mathcal A_i$ and $\mathcal G_i$ hold. The global comparison in Lemma~\ref{lem:global}, together with Lemmas~\ref{lem:baddeleted}, \ref{lem:gaussian}, and~\ref{lem:tail}, gives the following estimates.


\begin{subequations}
\begin{lemma}[Bounds on the remaining exceptional events]\label{lem:complements}
For either restricted estimator $m_i\in\{m_i^A,m_i^C\}$, let $W_i$ be as defined in \eqref{eq:expanded-display-014}. With $T_n=(D_0h_n)^{-1}$, the following bounds hold uniformly in $i$:
\begin{align}
\E[W_i\ind_{\mathcal A_i^c}] &\le Cn^{-1/2}, \label{eq:compA}\\
\E[W_i\ind_{\mathcal A_i\cap\mathcal G_i^c}] &\le Cn^{-4}+Ce^{-cn}, \label{eq:compG}\\
\E[W_i\ind_{\mathcal A_i\cap\mathcal G_i \cap\{S_i>T_n\}}] &\le Ch_n.
\label{eq:compS}
\end{align}
\end{lemma}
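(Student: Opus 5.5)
The plan is to bound $W_i$ crudely on each exceptional event by a \emph{global} comparison and then multiply by the small probability of the event, using Hölder's inequality. The global comparison (Lemma~\ref{lem:global}, together with Lemma~\ref{lem:stability}) should give a polynomial bound on $W_i$ with no regularity assumptions. By strong convexity of $F_i$, both $m_i$ and $b_i$ have norm at most $\mu^{-1}\|X_i\|_{\op}\|\eta_{-i}\|$; this follows from \eqref{eq:normbound}, since the same argument applies to the constrained problem over the convex set $D(b)\ni 0$. Hence $\|\Delta_i\|\le 2\mu^{-1}\|X_i\|_{\op}\|\eta_{-i}\|$ and $|x_i^\top\Delta_i|\le\|x_i\|\|\Delta_i\|$. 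For the loss difference, I use the mean value theorem with $|\ell'(y_i;z)|\le|\eta_i|+L_2|z|$, which gives
\[
|\Delta\ell_i|\le\bigl(|\eta_i|+L_2\|x_i\|(\|b_i\|+\|m_i\|)\bigr)\,\|x_i\|\,\|\Delta_i\|.
\]
So $W_i$ is bounded by a polynomial in $\|X_i\|_{\op}$, $\|x_i\|$, $\|\eta\|/\sqrt n$ and $|\eta_i|$, with $\|\eta_{-i}\|^2=(n-1)V_i$.

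\textbf{Bound \eqref{eq:compA}.} The main difficulty is that only third moments of $\eta_j$ are available. First split $\mathcal A_i^c$ into $\{\|X_i\|_{\op}>K_X\}$, which has probability $e^{-cn}$ by Gaussian operator-norm concentration, and $\{V_i\notin[v_-/2,2v_+]\}$. On the second event I will \emph{not} use Hölder with high moments of $V_i$, since these are unavailable. Instead I exploit that the bound on $W_i$ is (up to $\|x_i\|$-dependent factors, independent of $\mathcal D_{-i}$) at most linear-to-quadratic in $V_i$ and in $|\eta_i|$. On the event where $\|X_i\|_{\op}\le K_X$ but $V_i$ is off, I would use a sharper version of the deterministic bound: $\|\Delta_i\|^2\le C\|\Delta_i\|\,\|\eta_{-i}\|/\sqrt n$, and note that the needed quantity is essentially $\E[V_i\ind\{|V_i-\E V_i|\gtrsim 1\}]$. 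With third moments, a truncation argument (Rosenthal or Chebyshev applied to $\eta_j^2\wedge\sqrt n$, plus the bound $n\Prob(|\eta_j|>n^{1/4})\le Cn^{1/4}$, which is not enough) fails, so I will instead use Markov on the $3/2$ moment: $\E|V_i-\E V_i|^{3/2}\le Cn^{-1/2}$ by von Bahr--Esseen, since $\eta_j^2$ has a finite $3/2$ moment. Combined with $\E[V_i\ind\{|V_i-\E V_i|>c\}]\le C\E|V_i-\E V_i|^{3/2}+C\Prob(\cdot)$, this gives $Cn^{-1/2}$. The factors $|\eta_i|$ and $\|x_i\|$ are independent of $\mathcal D_{-i}$ and have bounded moments up to order three, and I will arrange the polynomial to be at most cubic overall (using $\|\Delta_i\|^2\le\|\Delta_i\|\cdot C$ where needed). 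This is the step I expect to be the main obstacle: the power of $\eta$ in the crude bound must be kept at most $3/2$ in $V_i$.

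\textbf{Bounds \eqref{eq:compG} and \eqref{eq:compS}.} For \eqref{eq:compG}, on $\mathcal A_i$ all $\mathcal D_{-i}$ quantities are bounded. By Lemma~\ref{lem:gaussian}, conditional on $\mathcal D_{-i}$, $x_i\sim N(0,\Sigma)$ violates $\mathcal G_i$ with probability at most $Cn^{-10}$; this uses Gaussian tails of the entries $(Q_ix_i)_g$ and $(X_iB_ix_i)_j$, each with variance $O(1/n)$ on $\mathcal A_i$, together with a union bound, with $C_G$ large. Cauchy--Schwarz conditionally, with $W_i^2$ having bounded conditional moments given $\eta_i$ of finite third moment (handled by conditional Hölder with exponents $3/2$ and $3$), then yields $Cn^{-4}$, plus an $e^{-cn}$ term from $\|x_i\|>K_x$. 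For \eqref{eq:compS}, on $\mathcal A_i\cap\mathcal G_i$ the crude bound gives $W_i\le C S_i^2$, because $|\eta_i|\le C S_i$ on this event: $c_i$ is controlled by $S_i$ via \eqref{eq:scorecontraction}, and $|\eta_i|\le|\ell'(y_i;x_i^\top b_i)|+L_2|x_i^\top b_i|\le S_i+C$. Then
\[
\E[S_i^2\ind\{S_i>T_n\}\ind_{\mathcal A_i}]\le T_n^{-1}\E[S_i^3\ind_{\mathcal A_i}]\le Ch_n
\]
by Lemma~\ref{lem:tail}.
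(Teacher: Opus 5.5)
There is a genuine gap in your first step: the envelope you build for $W_i$ is too large by a factor of order $n$, and this is fatal for \eqref{eq:compA}. Your bound $\|m_i\|,\|b_i\|\le\mu^{-1}\|X_i\|_{\op}\|\eta_{-i}\|$ is correct; it holds for the constrained problem too, since $0\in D(b)$. But it compares each fit with the origin, and $\|\eta_{-i}\|=\sqrt{(n-1)V_i}$ is of order $\sqrt n$ (on $\mathcal A_i$ it is at least $\sqrt{(n-1)v_-/2}$), while $\|X_i\|_{\op}=O(1)$. You write the envelope in terms of $\|\eta\|/\sqrt n$, but what you actually derived involves $\|\eta_{-i}\|$ itself. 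Consequently $\|\Delta_i\|^2\lesssim nV_i$ and $|x_i^\top\Delta_i|\lesssim\sqrt{nV_i}$. The term $L_2\|x_i\|^2(\|b_i\|+\|m_i\|)\|\Delta_i\|$ in your loss bound is also of order $n$. The proposed sharpening $\|\Delta_i\|^2\le C\|\Delta_i\|\|\eta_{-i}\|/\sqrt n$ does not follow from anything you have: the factor $1/\sqrt n$ is not there. On $\{\|X_i\|_{\op}\le K_X\}\cap\{V_i\notin[v_-/2,2v_+]\}$ this envelope integrates to at best $Cn\,\E[V_i\ind_{\{V_i\notin[v_-/2,2v_+]\}}]\le Cn^{1/2}$. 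With only three score moments the upper energy deviation can have probability of order $n^{-1/2}$ up to logarithms, so no choice of H\"older exponents recovers $n^{-1/2}$. The same defect breaks your argument for \eqref{eq:compS}. On $\mathcal A_i\cap\mathcal G_i$ your envelope does not reduce to $CS_i^2$. The step $L_2|x_i^\top b_i|\le C$ is also unavailable there, because $\mathcal G_i$ constrains $Q_ix_i$ and $X_iB_ix_i$, not $x_i^\top b_i$, and $\|x_i\|\|b_i\|$ is only bounded by $C\sqrt n$.

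The missing idea is the content of Lemma~\ref{lem:global}, which you cite but do not use: compare $m_i$ with $b_i$ through the feasible point $b\in D(b)$ rather than through $0$.
\begin{itemize}
\item Since $F_i(m_i)\le F_i(b)$ and $-c_ix_i\in\partial F_i(b)$, strong convexity gives $\tfrac{\mu}{2}\|m_i-b_i\|^2\le F_i(b)-F_i(b_i)\le c_i^2\|x_i\|^2/(2\mu)$.
\item Score contraction $|c_i|\le|\ell'(y_i;x_i^\top b_i)|$ then yields $\|\Delta_i\|\le\mu^{-1}S_i\|x_i\|$.
\item A Taylor expansion at $x_i^\top b_i$, using $|\ell'(y_i;x_i^\top b_i)|\le S_i$ instead of a bound through $\|b_i\|$, gives the $n$-free envelope $W_i\le CS_i^2(\|x_i\|^2+\|x_i\|^4)$.
\item To integrate it, write $S_i\le1+|\eta_i|+L_2|x_i^\top b_i|$. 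Given $\mathcal D_{-i}$, $x_i^\top b_i\sim N(0,b_i^\top\Sigma b_i)$ with variance at most $C\|b_i\|^2/p$. This conditional Gaussianity, not Cauchy--Schwarz, is what replaces $\|x_i\|\|b_i\|$ by an $O(1)$ quantity.
\item H\"older with exponents $3/2$ and $3$ on the $\eta_i^2$ factor gives $\E[W_i\mid\mathcal D_{-i}]\le C(1+\|b_i\|^2/p)$.
\item Since $\|b_i\|^2/p\le C\|X_i\|_{\op}^2V_i$ is linear in the normalized energy, your $3/2$-moment (von Bahr--Esseen) bound on $V_i$ now does the job. Together with H\"older against $\E[\|X_i\|_{\op}^6\ind_{\{\|X_i\|_{\op}>K_X\}}]\le Ce^{-cn}$, it gives \eqref{eq:compA} exactly as in Lemma~\ref{lem:baddeleted}.
\end{itemize}
With this envelope your outlines of \eqref{eq:compG} and \eqref{eq:compS} go through, with two bookkeeping corrections. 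First, Cauchy--Schwarz would need fourth moments of $S_i$, so you must use H\"older $(3/2,3)$. That costs the cube root of the conditional failure probability, so your quoted $n^{-10}$ gives only $n^{-10/3}$; you need the $n^{-20}$ of Lemma~\ref{lem:gaussian}. Second, on $\{\|x_i\|>K_x\}$ the envelope is unbounded in $\|x_i\|$, so you need the restricted moments $\E[\|x_i\|^m\ind_{\{\|x_i\|>K_x\}}]\le Ce^{-cn}$, not just the probability of that event.
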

\end{subequations}

Now we are ready to prove the main results.
\paragraph{Proof of \eqref{eq:mainloss} and \eqref{eq:mainprediction}.}
Since the three events in Lemma~\ref{lem:complements} partition $\mathcal H_i^c$, combining these estimates with \eqref{eq:good-event-error} gives
\begin{equation}\label{eq:main-unconditional-sum}
\begin{aligned}
\E W_i
=
\E[W_i\ind_{\mathcal H_i}]
+
\E[W_i\ind_{\mathcal H_i^c}]
\le
C(ph_n^3+h_n^2+h_n+n^{-1/2}+n^{-4}+e^{-cn}).
\end{aligned}
\end{equation}
We now simplify the bound on the right hand side.
For all sufficiently large $n$, since $h_n=\sqrt{\frac{\log(en)}{n}}$, we have $h_n^2\le h_n$ and $n^{-1/2}+n^{-4}+e^{-cn}\le Ch_n$.
Consequently,
\begin{equation}\label{eq:main-rate-substitution}
\begin{aligned}
\E W_i
&\le C(ph_n^3+h_n)
=
\frac{C}{\sqrt n}
\left\{\frac{p}{n}\log^{3/2}(en)+\log^{1/2}(en)\right\}
\le C\frac{\log^{3/2}(en)}{\sqrt n}
= C\mathfrak r_n,
\end{aligned}
\end{equation}
where the last inequality uses the uniform upper bound on $p/n$ and $\log(en)\ge1$.
The bound holds uniformly in $i$ and for both restricted estimators.
Since $W_i$ bounds each of the three error terms, taking suprema over $i$ proves \eqref{eq:mainloss} and \eqref{eq:mainprediction}, with the expectations remaining inside the suprema.

\paragraph{Proof of \eqref{eq:mainprojector}.}
The expectation on $\mathcal H_i$ is bounded in \eqref{eq:main-local-projector}.
We now estimate the normalized projector discrepancy on the three complementary events. The block expression in \eqref{eq:prediction-projector-detail} gives $0\le\|P-P_i\|_F^2\le p$.
Thus, Lemmas~\ref{lem:baddeleted} and~\ref{lem:gaussian} yield
\begin{align}
\frac1p\E[\|P-P_i\|_F^2\ind_{\mathcal A_i^c}]
&\le \Prob(\mathcal A_i^c)
\le Cn^{-1/2},
\label{eq:main-projector-complements_1}\\
\frac1p\E[\|P-P_i\|_F^2
             \ind_{\mathcal A_i\cap\mathcal G_i^c}]
&\le \Prob(\mathcal A_i\cap\mathcal G_i^c)
\le Cn^{-20}+Ce^{-cn}.
\label{eq:main-projector-complements_2}
\end{align}
For the event $\mathcal A_i\cap\mathcal G_i\cap\{S_i>T_n\}$, we use the $\mathcal D_{-i}$-measurability of $\mathcal A_i$, Markov's inequality, and the conditional third-moment bound in \eqref{eq:conditionaltail}:
\begin{equation}\label{eq:main-projector-complements_3}
\begin{aligned}
\frac1p\E[\|P-P_i\|_F^2
\ind_{\mathcal A_i\cap\mathcal G_i\cap\{S_i>T_n\}}]
&\le \Prob(\mathcal A_i\cap\{S_i>T_n\})\\
&=
\E[\ind_{\mathcal A_i}
\Prob(S_i>T_n\mid\mathcal D_{-i})]\\
&\le
T_n^{-3}\E\!\left[
\ind_{\mathcal A_i}
\E(S_i^3\mid\mathcal D_{-i})
\right]\\
&\le CT_n^{-3}
=CD_0^3h_n^3
\le Ch_n^3.
\end{aligned}
\end{equation}
Combining \eqref{eq:main-local-projector} with \eqref{eq:main-projector-complements_1}--\eqref{eq:main-projector-complements_3} and summing over the four events gives
\[
\frac1p\E\|P-P_i\|_F^2
\le
C(h_n+p^{-1}+n^{-1/2}+n^{-20}+e^{-cn}+h_n^3)
\le Ch_n,
\]
where the last inequality uses $p^{-1}=O(n^{-1})$, $n^{-20}+e^{-cn}\le Cn^{-1/2}\le Ch_n$, and $h_n^3\le h_n$ for all sufficiently large $n$.
These estimates are uniform in $i$, so taking the supremum proves \eqref{eq:mainprojector}. For the complementary normal projectors,
\begin{equation}\label{eq:main-normal-projector-identity}
(I-P)-(I-P_i)=-(P-P_i),
\end{equation}
so the same Frobenius bound holds.

\paragraph{Proof of \eqref{eq:mainrisk}.}
We obtain the LO comparison by averaging the held-out loss differences $\Delta\ell_i$ over the deletion indices. For each $D\in\{A,C\}$ and every dataset, the triangle inequality gives
\begin{equation}\label{eq:main-risk-triangle-detail}
|\LO-\widetilde\LO^D|
=
\left|\frac1n\sum_{i=1}^n\Delta\ell_i\right|
\le
\frac1n\sum_{i=1}^n|\Delta\ell_i|.
\end{equation}
By linearity of expectation and the uniform loss bound in \eqref{eq:mainloss}, we obtain
\begin{equation}\label{eq:main-risk-expectation-detail}
\E|\LO-\widetilde\LO^D|
\le
\frac1n\sum_{i=1}^n\E|\Delta\ell_i|
\le C\mathfrak r_n.
\end{equation}
For any $M>0$, Markov's inequality further yields
\begin{equation}\label{eq:main-risk-stochastic-order}
\Prob\left\{ \frac1n\sum_{i=1}^n|\Delta\ell_i| >M\mathfrak r_n \right\}
\le\frac{C}{M}.
\end{equation}
Thus $\frac1n\sum_{i=1}^n|\Delta\ell_i|=O_p(\mathfrak r_n)$.

\section{Proof of Theorem~\ref{thm:uniform} }\label{sec:proof:thm2.8}

We prove Theorem~\ref{thm:uniform}, with $M_n$ and $t_n=D_0M_nh_n$ as defined in Section~\ref{sec:main results}. The deterministic estimates from Section~\ref{sec:local} remain unchanged. The additional work is to construct a common event, dominate all deleted boundary counts by one full-data count, and control the maximum error on the complement.

\begin{proof}
The additional tail assumption is used only in this proof and in the corresponding maximum assertions for affine continuation. It supplies score moments of every fixed order and a common high-probability score bound.

\noindent\textbf{Step 1: construct simultaneous spectral and score-energy events.}
First, integration of~\eqref{eq:weibull} gives, for every fixed $r>0$,
\begin{equation}\label{eq:uniform-score-moments}
 \E|\eta_j|^r
 =r\int_0^\infty u^{r-1}\Prob(|\eta_j|>u)\,du
 \le2r\int_0^\infty u^{r-1}e^{-(u/K)^\alpha}\,du
 =2K^r\Gamma(1+r/\alpha)<\infty.
\end{equation}
In particular, the fourth and eighth moments used in this section are available from the additional assumption; they are not being added to the mean theorem.

Let $Y_j=\eta_j^2$, $v=\E Y_j\in[v_-,v_+]$, and
\begin{equation}\label{eq:uniform-energy-truncation}
 R_n=C_R\log^{2/\alpha}(en),\qquad
 U_j=Y_j\ind_{\{Y_j\le R_n\}},\qquad \xi_j=U_j-\E U_j.
\end{equation}
The variables $U_j$ are independent, lie in $[0,R_n]$, and have variance at most $\E\eta_j^4\le C_4$. The tail of $Y_j$ obeys $\Prob(Y_j>s)\le2e^{-(s/K^2)^{\alpha/2}}$. The layer-cake identity gives
\begin{equation}\label{eq:uniform-truncation-tail-integral}
 \E[Y_j\ind_{\{Y_j>R\}}]
 =R\Prob(Y_j>R)+\int_R^\infty\Prob(Y_j>s)\,ds
 \le C e^{-cR^{\alpha/2}}
\end{equation}
for large $R$. For the integral, split its exponential into two equal factors, bound one by its value at $R$, and integrate the other over $[0,\infty)$; the factor $R$ in the first term is absorbed by decreasing $c$. Choose $C_R$ large enough that
\begin{equation}\label{eq:uniform-truncation-probability}
 \Prob(\max_jY_j>R_n)\le Cn^{-22},\qquad
 0\le v-\E U_j=\E[Y_j\ind_{\{Y_j>R_n\}}]\le Cn^{-22}.
\end{equation}

For clarity, the concentration estimate for the truncated average can be derived directly. Since $|\xi_j|\le R_n$ and $\E\xi_j=0$, the inequalities $k!\ge2\,3^{k-2}$ for $k\ge2$ and $\E|\xi_j|^k\le R_n^{k-2}\E\xi_j^2$ imply, for $|s|R_n<3$,
\begin{equation}\label{eq:uniform-bernstein-mgf}
 \begin{aligned}
 \E e^{s\xi_j}
 &\le1+\sum_{k\ge2}\frac{|s|^kR_n^{k-2}\E\xi_j^2}{k!}\\
 &\le1+\frac{s^2C_4}{2(1-|s|R_n/3)}
 \le\exp\left(\frac{s^2C_4}{2(1-|s|R_n/3)}\right).
 \end{aligned}
\end{equation}
Independence, exponential Markov inequality, and the choice $s=\varepsilon/(C_4+R_n\varepsilon/3)$ give the two-sided Bernstein bound
\begin{equation}\label{eq:uniform-energy-bernstein}
 \Prob\left(\left|\frac1n\sum_j\xi_j\right|>\varepsilon\right)
 \le2\exp\left(-\frac{n\varepsilon^2}{2(C_4+R_n\varepsilon/3)}\right)
 \le2e^{-c_\varepsilon n/R_n}
\end{equation}
for a fixed $\varepsilon>0$. Since $R_n$ is a fixed power of $\log(en)$, the final probability is at most $Cn^{-22}$ for sufficiently large $n$.

Take $\varepsilon=v_-/16$. On the event of no truncation and no deviation in~\eqref{eq:uniform-energy-bernstein}, equation~\eqref{eq:uniform-truncation-probability} gives, for large $n$,
\begin{equation}\label{eq:uniform-full-energy-window}
 V_{\mathrm{full}}:=\frac1n\sum_j\eta_j^2,
 \qquad |V_{\mathrm{full}}-v|\le v_-/8.
\end{equation}
Moreover, deleting any one observation changes this mean by
\begin{equation}\label{eq:uniform-deletion-energy-change}
 |V_i-V_{\mathrm{full}}|
 =\frac{|V_{\mathrm{full}}-Y_i|}{n-1}
 \le\frac{V_{\mathrm{full}}+R_n}{n-1}\le v_-/8
\end{equation}
for all sufficiently large $n$, simultaneously over $i$. Hence every deleted energy lies in $[v_-/2,2v_+]$, and the full energy lies in the same window. Let $\mathcal E_n^{\mathrm{eng}}$ denote the event just constructed; its failure probability is at most $Cn^{-22}$.

The Gaussian net calculation in~\eqref{eq:bad-net-gaussian-tail}--\eqref{eq:bad-design-spectral-tail}, with $n$ rows, gives
\begin{equation}\label{eq:uniform-full-spectral-event}
 \Prob(\|X\|_{\op}>K_X)\le Ce^{-cn}.
\end{equation}
On $\{\|X\|_{\op}\le K_X\}$ every deleted design also has norm at most $K_X$. Together with $\mathcal E_n^{\mathrm{eng}}$, this ensures all events $\mathcal A_i$ and the full-data spectral and energy bounds.

\noindent\textbf{Step 2: simultaneous Gaussian responses and held-out scores.}
Recall that $\mathcal D_{-i}$ is the $\sigma$-field generated by all observations except observation $i$. The event $\mathcal A_i$ and the matrices $X_i,B_i,Q_i$ are $\mathcal D_{-i}$-measurable, whereas the held-out pair $(y_i,x_i)$ is independent of $\mathcal D_{-i}$. In particular, $x_i\mid\mathcal D_{-i}\sim N(0,\Sigma)$. For the events $B_i^{\mathrm{lin}}$ and $B_i^{\mathrm{row}}$ defined in~\eqref{eq:complements-gaussian-failure-events}, conditional Gaussian control and a union bound give
\begin{equation}\label{eq:uniform-gaussian-union}
 \begin{aligned}
 \Prob\left(\bigcup_{i=1}^n(\mathcal A_i\cap B_i^{\mathrm{lin}})\right)
 &\le\sum_i\E[\ind_{\mathcal A_i}\Prob(B_i^{\mathrm{lin}}\mid\mathcal D_{-i})]
 \le Cn^{-19},\\
 \Prob\left(\bigcup_{i=1}^nB_i^{\mathrm{row}}\right)
 &\le Cne^{-cn}\le Ce^{-c'n}.
 \end{aligned}
\end{equation}
On $\mathcal A_i$, the score decomposition and its conditional Gaussian predictor give, for sufficiently large $u$,
\begin{equation}\label{eq:uniform-score-tail}
 \Prob(S_i>u\mid\mathcal D_{-i})
 \le2e^{-c_1u^\alpha}+2e^{-c_2u^2}.
\end{equation}
For $M_n=C_M\log^a(en)$, $a=\max\{1/2,1/\alpha\}$, both $a\alpha\ge1$ and $2a\ge1$. Choose $C_M$ large enough that each term in~\eqref{eq:uniform-score-tail}, evaluated at $u=M_n$, is at most $Cn^{-22}$. It follows that
\begin{equation}\label{eq:uniform-score-union}
 \Prob\left(\bigcup_i\{\mathcal A_i,S_i>M_n\}\right)
 \le\sum_i\E[\ind_{\mathcal A_i}\Prob(S_i>M_n\mid\mathcal D_{-i})]
 \le Cn^{-21}.
\end{equation}
Let $\mathcal E_n$ be the intersection of the full spectral event, $\mathcal E_n^{\mathrm{eng}}$, all Gaussian row and linear-response events, and $\{\max_iS_i\le M_n\}$. On the first two events all $\mathcal A_i$ hold, so~\eqref{eq:uniform-gaussian-union} and~\eqref{eq:uniform-score-union} apply to the remaining failures. Therefore
\begin{equation}\label{eq:uniform-common-event-probability}
 \Prob(\mathcal E_n^c)
 \le Cn^{-22}+Ce^{-cn}+Cn^{-19}+Cn^{-21}
 \le Cn^{-12}.
\end{equation}
This construction uses union bounds, not independence of events corresponding to different deletions.

If the loss derivative is globally bounded by a fixed constant, then $S_i$ is deterministically bounded by one plus that constant, and the null scores are also bounded. In that case take $M_n$ constant, omit the score-tail union, and apply~\eqref{eq:uniform-energy-bernstein} directly to the bounded squared scores. The same common-event conclusion holds.

\noindent\textbf{Step 3: compare all deleted boundary counts with the full-data count.}
Within this proof abbreviate
\begin{equation}\label{eq:uniform-common-counts}
 N_i=N_i(2t_n),\qquad k_i=N_i+M_n^2,\qquad
 N_{\mathrm{full}}(r)=\#\{g:\dist(v_g(b),\partial K_g)\le r\},
 \qquad t_n=D_0M_nh_n.
\end{equation}
For large $n$, $t_n\le1$. On $\mathcal E_n$, the original row-specific width and count satisfy $D_0S_ih_n\le t_n$ and $N_i(2D_0S_ih_n)+S_i^2\le k_i$. Thus the deterministic lemmas, by monotonicity of their upper bounds, imply for their original trial $u_i=-c_iQ_ix_i$
\begin{equation}\label{eq:fulltrial}
 \max_g\|u_{i,g}\|\le t_n/4,
 \qquad
 \|v(b)-(v_i^0+u_i)\|^2\le Ct_n^2(N_i+M_n^2).
\end{equation}
Fix a constant $R>9/4$. A block counted by $N_i$ but not by $N_{\mathrm{full}}(Rt_n)$ satisfies
\begin{equation}\label{eq:uniform-count-separation}
 \begin{aligned}
 \|v_g(b)-v_{i,g}^0\|
 &\ge\dist(v_g(b),\partial K_g)-\dist(v_{i,g}^0,\partial K_g)\\
 &>(R-2)t_n,\\
 \|v_g(b)-(v_{i,g}^0+u_{i,g})\|
 &>(R-2)t_n-t_n/4=(R-9/4)t_n.
 \end{aligned}
\end{equation}
Let $J_i^{\mathrm{out}}$ be the set of these blocks. Summing their squared contributions in~\eqref{eq:fulltrial} gives
\begin{equation}\label{eq:uniform-count-pre-absorption}
 \begin{aligned}
 |J_i^{\mathrm{out}}|(R-9/4)^2t_n^2
 &\le Ct_n^2(N_i+M_n^2),\\
 N_i&\le N_{\mathrm{full}}(Rt_n)
       +\frac{C}{(R-9/4)^2}(N_i+M_n^2).
 \end{aligned}
\end{equation}
Choose the fixed $R$ so that $C/(R-9/4)^2\le1/2$. Moving that multiple of $N_i$ to the left yields, simultaneously over all deletions,
\begin{equation}\label{eq:fullabsorb}
 \max_iN_i\le2N_{\mathrm{full}}(Rt_n)+CM_n^2
 \quad\text{on }\mathcal E_n.
\end{equation}

\noindent\textbf{Step 4: integrate the single full-data count.}
Let $\mathcal A_{\mathrm{full}}$ be the full-data spectral and energy event, which contains $\mathcal E_n$. The full-data version of Lemma~\ref{lem:antic} follows from the same clamped objectives with all $n$ rows retained. Explicitly, clamp block $g$ in $F$ and define its scores $a_j^{\mathrm{full}}$ using the clamped fit. Conditional on $\sigma(y_j,x_{j,-g}:1\le j\le n)$, these scores are fixed, and on the corresponding measurable enlargement of $\mathcal A_{\mathrm{full}}$ the score sandwich gives
\begin{equation}\label{eq:uniform-full-clamped-energy}
 cn\le\sum_{j=1}^n(a_j^{\mathrm{full}})^2\le Cn.
\end{equation}
The conditional Gaussian-likelihood curvature bound still has size $Cp$, and the convolution weights give curvature parameter $Cp/\sum_j(a_j^{\mathrm{full}})^2\le C$. The clamping residual and tube transfer use only the retained design norm, so they hold with $X$ in place of $X_i$. The conditional-density and tube-volume bounds therefore give, for each block,
\begin{equation}\label{eq:uniform-full-block-probability}
 \Prob\{\dist(v_g(b),\partial K_g)\le r,\mathcal A_{\mathrm{full}}\}
 \le Cr,
\end{equation}
with the same large-width extension as before. Summing over $G\le p$ and using the event inclusion gives
\begin{equation}\label{eq:uniform-full-count-expectation}
 \E[N_{\mathrm{full}}(Rt_n)\ind_{\mathcal E_n}]
 \le\E[N_{\mathrm{full}}(Rt_n)\ind_{\mathcal A_{\mathrm{full}}}]
 \le Cpt_n.
\end{equation}
Equation~\eqref{eq:fullabsorb} now yields
\begin{equation}\label{eq:maxcountmean}
 \E[\max_i k_i\,\ind_{\mathcal E_n}]
 \le C\E[N_{\mathrm{full}}(Rt_n)\ind_{\mathcal E_n}]+CM_n^2
 \le C(pt_n+M_n^2).
\end{equation}
This is an expected bound for a single full-data count, not a union bound over all deleted counts.

\noindent\textbf{Step 5: prediction, loss, displacement, and projectors on the common event.}
The deterministic inequalities, with widths and counts bounded as above, imply
\begin{equation}\label{eq:uniform-local-maxima}
 \begin{aligned}
 \max_i|x_i^\top\Delta_i|&\le Ch_nt_n\max_i k_i,\\
 \max_i|\Delta\ell_i|+\max_i\|\Delta_i\|^2
 &\le Ct_n^2\max_i k_i,\\
 \max_i p^{-1}\|P-P_i\|_F^2&\le Cp^{-1}\max_i k_i
 \qquad\text{on }\mathcal E_n.
 \end{aligned}
\end{equation}
Multiplying by $\ind_{\mathcal E_n}$ and taking expectations in~\eqref{eq:uniform-local-maxima}, then using~\eqref{eq:maxcountmean}, proves the two maximum expectation bounds on this event. It also gives the moment bounds needed for the two stochastic-order statements.

\noindent\textbf{Step 6: an integrable envelope on the complementary event.}
For each literal restricted fit, zero is feasible and the original penalty is nonnegative. The tangent inequality at zero gives
\begin{equation}\label{eq:uniform-restricted-zero-comparison}
 \begin{aligned}
 F_i(m_i)&\le F_i(0),\\
 F_i(m_i)&\ge F_i(0)+\eta_{-i}^\top X_im_i+\frac\mu2\|m_i\|^2,\\
 \|m_i\|&\le\frac2\mu\|X_i\|_{\op}\|\eta_{-i}\|
 \le\frac2\mu\|X\|_{\op}\|\eta\|.
 \end{aligned}
\end{equation}
The norm inequality is immediate when $m_i=0$, and otherwise follows by division by $\|m_i\|$. Together with the ambient bound, it yields, for every dataset,
\begin{equation}\label{eq:uniform-all-fit-norms}
 \max_i(\|b_i\|+\|m_i\|)\le C\|X\|_{\op}\|\eta\|.
\end{equation}
Put $T=\|X\|_{\op}$ and $U=\|\eta\|$ temporarily. Since $\max_i\|x_i\|\le T$ and $\max_i|\eta_i|\le U$,
\begin{equation}\label{eq:uniform-prediction-score-envelope}
 \begin{aligned}
 \max_i|x_i^\top(b_i-m_i)|&\le CT^2U,\\
 \max_i|\ell'(y_i;x_i^\top b_i)|&\le U+L_2T\max_i\|b_i\|
 \le C(1+T^2)U.
 \end{aligned}
\end{equation}
Taylor's inequality then gives the common envelope
\begin{equation}\label{eq:uniform-polynomial-envelope}
 \max_i|\Delta\ell_i|\le C(T^2+T^4)U^2\le Z_n,
 \qquad
 \max_i|x_i^\top\Delta_i|\le Z_n,
 \qquad
 Z_n=C(1+T^4)(1+U^2),
\end{equation}
after increasing $C$.

All fixed Gaussian spectral moments are uniformly bounded. Jensen for the nonnegative sum of squared scores and~\eqref{eq:uniform-score-moments} give
\begin{equation}\label{eq:uniform-score-vector-eighth}
 \E\|\eta\|^8
 =\E\left(\sum_{j=1}^n\eta_j^2\right)^4
 \le n^3\sum_{j=1}^n\E|\eta_j|^8\le Cn^4.
\end{equation}
Cauchy--Schwarz, which does not require independence of $X$ and $\eta$, therefore yields
\begin{equation}\label{eq:uniform-envelope-second-moment}
 \begin{aligned}
 \E Z_n^2
 &\le C\left(\E(1+T^4)^4\right)^{1/2}
         \left(\E(1+U^2)^4\right)^{1/2}
 \le Cn^2,\\
 \E[Z_n\ind_{\mathcal E_n^c}]
 &\le(\E Z_n^2)^{1/2}\Prob(\mathcal E_n^c)^{1/2}
 \le Cn\cdot n^{-6}=Cn^{-5}\le Cn^{-4}.
 \end{aligned}
\end{equation}
Adding this term to the common-event expectation estimates proves~\eqref{eq:maxpred}--\eqref{eq:maxloss} without indicators.

\noindent\textbf{Step 7: simplify the rates and justify the stochastic orders.}
For $M_n$ a fixed power of the logarithm, or a constant,
\begin{equation}\label{eq:uniform-count-scale-ratio}
 \frac{M_n^2}{pt_n}=\frac{M_n}{D_0ph_n}\longrightarrow0,
\end{equation}
because $ph_n\asymp\sqrt{n\log(en)}$. Substitute $t_n=D_0M_nh_n$:
\begin{equation}\label{eq:uniform-rate-expansions}
 \begin{aligned}
 h_nt_n(pt_n+M_n^2)
 &=D_0^2pM_n^2h_n^3+D_0M_n^3h_n^2
 =O\left(M_n^2\frac{\log^{3/2}(en)}{\sqrt n}\right),\\
 t_n^2(pt_n+M_n^2)
 &=D_0^3pM_n^3h_n^3+D_0^2M_n^4h_n^2
 =O\left(M_n^3\frac{\log^{3/2}(en)}{\sqrt n}\right),\\
 p^{-1}(pt_n+M_n^2)&=O(M_nh_n).
 \end{aligned}
\end{equation}
Let $a_n=M_n^{3/2}\log^{3/4}(en)n^{-1/4}$. Equations~\eqref{eq:uniform-local-maxima}, \eqref{eq:maxcountmean}, and~\eqref{eq:uniform-rate-expansions} give, for any $K_0>0$,
\begin{equation}\label{eq:uniform-displacement-markov}
 \begin{aligned}
 \Prob(\max_i\|\Delta_i\|>K_0a_n)
 &\le\Prob(\mathcal E_n^c)
    +\frac{\E[\max_i\|\Delta_i\|^2\ind_{\mathcal E_n}]}{K_0^2a_n^2}
 \le Cn^{-12}+\frac{C}{K_0^2},\\
 \Prob\left(\max_i p^{-1}\|P-P_i\|_F^2>K_0M_nh_n\right)
 &\le\Prob(\mathcal E_n^c)
    +\frac{\E[\max_i p^{-1}\|P-P_i\|_F^2\ind_{\mathcal E_n}]}{K_0M_nh_n}
 \le Cn^{-12}+\frac{C}{K_0}.
 \end{aligned}
\end{equation}
These are precisely~\eqref{eq:maxvector}--\eqref{eq:maxproj}. The common-event and moment arguments above also establish all stated bounded-score variants with constant $M_n$.
\end{proof}

\section{Discussion}\label{sec:discussion}

\subsection{Assumptions that remain essential}\label{sec:discussion-assumptions}

Several assumptions remain essential to the arguments developed in this paper. Extending the results beyond these assumptions, or alternatively constructing counterexamples showing that some of the conclusions fail when the assumptions are relaxed, remains an interesting direction for future work.

\paragraph{Deterministic regularization.}
All penalty functions and tuning parameters are deterministic in the stated results. The same penalty is used in the full and deleted objectives. Full-sample adaptive weights or a tuning parameter chosen from the observed data can destroy the deleted-data measurability used for Gaussian conditioning and block clamping. Such procedures need a separate analysis rather than a samplewise appeal to the fixed-penalty assumptions.

\paragraph{Block structure.}
The proof uses separability to clamp a single block without retaining dependence on its removed columns. It also uses activation at the block origin and a dimension-uniform tube-volume estimate. General fused penalties, nuclear-norm penalties, and blocks with unbounded dimensions are not covered as stated. Smoothness of the proximal map, rather than boundedness of the primal group-penalty Hessian, is what allows active blocks to approach zero.

\paragraph{Ridge and design.}
A ridge parameter bounded away from zero supplies the global strong-convexity and resolvent bounds, including on exceptional datasets. The constants in these arguments need not remain bounded as the ridge vanishes. Gaussianity is used both for held-out linear responses and for the conditional block densities after multiplying by the response likelihood. The result should not be read as a distribution-free statement for the design.

\paragraph{Moments and risk targets.}
The mean proof uses third moments through a conditional tail envelope and weighted exceptional-event bounds. Its apparent fourth-power term is truncated before expectation. This does not show that a third moment is necessary, nor that it is sufficient for every population-risk or tuning claim. A response-dependent additive term in the loss can cancel from LO differences even when it is not separately integrable. Population-risk consistency and uniform control over tuning parameters remain distinct from the fixed-parameter approximation theorem.

\subsection{Computational implications and further questions}\label{sec:discussion-computation}
Solving the restricted optimization problems in \eqref{eq:restricted}, while generally easier than solving the original leave-one-out problem in \eqref{eq:LOO_prob}, can still be computationally expensive, particularly because approximating LO requires solving a closely related problem for each of the \(n\) deleted observations. As discussed earlier, following the approach of \cite{rad2020scalable}, one can instead approximate the solution of each restricted problem by taking a single Newton step initialized at the full-data estimator.
For a nonquadratic loss or a curved active penalty, replacing the exact restricted minimizer by a one-step Newton approximation introduces an additional source of error. A complete theoretical guarantee for the fully computational ALO approximation would therefore need to control this Newton approximation error, as well as any optimization error in the original full-data fit. The deterministic comparison developed in this paper can serve as one component of such an analysis, but we do not establish a general bound for the Newton-step error here.
The analysis of this additional term is, however, closely related to the arguments developed in \cite{rad2020scalable}. Nevertheless, a complete analysis of the full ALO formula proposed in \cite{wang2018approximate} would require combining our restricted-fit comparison with a bound on this Newton approximation error, and such a result is beyond the scope of the present paper.

\section*{AI Disclosure}

The proof was initially obtained by Stellar Colosseum \cite{colosseum}, a fully automated agentic
system built on the Gemini~4 base model. The raw output of Stellar Colosseum is available at
\url{https://drive.google.com/file/d/1eD2DxOj4TtG_fh-NF6TnVel_O98lI6SQ/view?usp=sharing}, and a Lean~4 verification of the main theorem at \url{https://drive.google.com/file/d/1Ai19UYIojXzbCnWztgM9l-c8iUz50OK1/view?usp=sharing}. 
The authors verified the proof and edited it for
clarity of presentation; other models were used later to edit the text.


\bibliographystyle{alpha}
\bibliography{references}

\appendix
\section{Preliminary}\label{Proofs of preliminary results}

This section includes some useful results that we used in the proof.
\paragraph{Why the restricted domains are closed.}
The following proposition shows that passing to the closure preserves the infimum, while minimization over the open stratum need not attain it.

\begin{proposition}[Open strata and attainment]\label{prop:attainment}
For every fixed dataset,
\begin{equation}\label{eq:expanded-display-007}
\inf_{\mathcal M^\circ(b)}F_i=\min_{C(b)}F_i.
\end{equation}
The open-stratum problem attains its minimum if and only if $m_i^C\in\mathcal M^\circ(b)$, in which case its minimizer is $m_i^C$. The assumptions above do not guarantee attainment.
\end{proposition}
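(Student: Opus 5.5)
The plan is to show that $\mathcal M^\circ(b)$ is a convex set that is dense in its closure $C(b)$, and then use continuity of $F_i$.

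\emph{Convexity and density.} Each factor $M_g(b_g)$ is one of four sets: $\{0\}$, an open half-line, or, when $d_g\ge2$, the punctured space $\R^{d_g}\setminus\{0\}$. The punctured space is not convex, so convexity of the product can fail. I will therefore avoid convexity and argue directly with approximating sequences. Let $m=m_i^C$, which exists and is unique by Lemma~\ref{lem:stability}. I build a sequence $\beta^{(k)}\in\mathcal M^\circ(b)$ with $\beta^{(k)}\to m$ block by block:
\begin{itemize}
\item if $g\notin S(b)$, set $\beta^{(k)}_g=0=m_g$;
\item if $d_g=1$ and $b_g\neq0$, set $\beta^{(k)}_g=m_g+\operatorname{sign}(b_g)/k$, which lies in the correct open half-line because $m_g$ lies in its closure;
\item if $d_g\ge2$ and $b_g\neq0$, set $\beta^{(k)}_g=m_g$ when $m_g\neq0$, and $\beta^{(k)}_g=e_1/k$ otherwise.
\end{itemize}

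\emph{Equality of infimum and minimum.} Since $F_i$ is finite and convex on $\R^p$, it is continuous. Hence
\[
\inf_{\mathcal M^\circ(b)}F_i\le\lim_k F_i(\beta^{(k)})=F_i(m)=\min_{C(b)}F_i .
\]
The reverse inequality holds because $\mathcal M^\circ(b)\subseteq C(b)$. This proves \eqref{eq:expanded-display-007}.

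\emph{Attainment.} Suppose the open-stratum problem attains its infimum at some $\beta\in\mathcal M^\circ(b)$. Then $F_i(\beta)=\min_{C(b)}F_i$, and $\beta\in C(b)$. Since $F_i$ is $\mu$-strongly convex and $C(b)$ is closed and convex, the minimizer over $C(b)$ is unique, so $\beta=m_i^C$. Conversely, if $m_i^C\in\mathcal M^\circ(b)$, then it attains the infimum. One point needs checking here: $C(b)$ must be convex. It equals $\prod_g\cl M_g(b_g)$, whose factors are $\{0\}$, closed half-lines, or the whole of $\R^{d_g}$, so it is convex.

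\emph{Non-attainment.} The assumptions do not force $m_i^C\in\mathcal M^\circ(b)$, and I will exhibit an example. Take $p=1$, squared loss, and data for which the lasso/ridge full fit has $b>0$. Then choose the deleted data so that the constrained minimizer of $F_i$ over $[0,\infty)$ sits at the boundary $0$. This happens when the deleted unconstrained minimizer is $b_i\le0$, for instance when removing observation $i$ flips the sign of the correlation. In that case the infimum over $(0,\infty)$ is not attained. The only delicate step is confirming that such a configuration is consistent with the fixed-dataset framing. Since the claim concerns arbitrary fixed datasets, a single explicit numeric instance suffices.
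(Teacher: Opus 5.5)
Your argument for the first two assertions is correct and is essentially the paper's Step~1. The paper simply takes a sequence in $\mathcal M^\circ(b)$ converging to $m_i^C\in\cl\mathcal M^\circ(b)$, whereas you construct one block by block. The converse, via uniqueness of the minimizer over the closed convex set $C(b)$, is identical.

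The gap is in the last assertion. Your one-dimensional computation is right. For $p=1$ and $b>0$ one has $m_i^C=\max\{b_i,0\}$, so non-attainment occurs for any dataset with $\sum_jx_jy_j>\lambda_1\ge\sum_{j\ne i}x_jy_j$; for instance $n=2$, $x_1y_1=2\lambda_1$, $x_2y_2=0$, $i=1$. You never actually write such an instance down, but that is minor.

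The real issue is scope. The claim is that the \emph{standing assumptions} (Assumptions~\ref{ass:blocks}--\ref{ass:likelihood}) do not guarantee attainment. Those are distributional conditions in the proportional regime with $n,p$ large. A single fixed dataset with $p=1$ says nothing about that regime. Your justification, ``since the claim concerns arbitrary fixed datasets,'' conflates this claim with the deterministic first part.

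Your computation also does not scale. With squared loss and $p>1$ the objective couples coordinates through $X_i^\top X_i$, so the restricted minimizer is no longer a coordinatewise projection of $b_i$. To keep squared loss, you would need a separate openness (strict-complementarity) argument showing that the bad configuration has positive probability for every large $(n,p)$, and you give none.

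The missing idea in the paper's proof is to pick an admissible loss that makes the problem exactly separable. It takes $\ell(y;z)=-z$, so $\ell''=0$ and $\eta_j=-1$. The response density $q(y\mid u)$ is standard normal and does not depend on $u$. It also sets $\beta^*=0$, $\Sigma=I_p/p$, and uses singleton lasso blocks. Then $F$ and $F_1$ split into $p$ scalar problems driven by the independent pairs $(A_k,B_k)=(\sum_{j\ge2}x_{jk},\,x_{1k})$.

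Each coordinate realizes exactly your bad configuration on $H_k=\{\lambda_1-p^{-1/2}<A_k<\lambda_1,\ B_k>p^{-1/2}\}$, and $\Prob(H_k)\ge c'p^{-1/2}$. Independence across $k$ then gives $\Prob(m_1^C\notin\mathcal M^\circ(b))\ge1-e^{-c'\sqrt p}\to1$. Every assumption is checked explicitly for this model.
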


\begin{remark}
This proposition is an isolated observation motivating
the use of closed feasible domains. It shows that
minimization over the open stratum may fail to attain
its infimum, whereas passing to its closure preserves
the optimal value and ensures attainment. The active
subspace $A(b)$ is another closed alternative, generally
larger than $C(b)$. Neither this proposition nor its
counterexample is used in the subsequent proofs.
\end{remark}

\paragraph{A comparison valid on every dataset.}
The following lemma record comparison bounds that hold for every dataset and will be used to control the contribution of exceptional events.

\begin{lemma}[A global comparison for exceptional events]\label{lem:global}
Set
\begin{equation}\label{eq:localscore}
 S_i=1+|\ell'(y_i;x_i^\top b_i)|.
\end{equation}
For either restricted domain $D\in\{A,C\}$ defined in \eqref{eq:domains}, without any probabilistic event,
\begin{align}
 \|\Delta_i^D\|&\le\mu^{-1}S_i\|x_i\|,\label{eq:globaldisplacement}\\
 |x_i^\top\Delta_i^D|&\le\mu^{-1}S_i\|x_i\|^2,\label{eq:globalprediction}\\
 |\ell(y_i;x_i^\top b_i)-\ell(y_i;x_i^\top m_i^D)|
 &\le C S_i^2(\|x_i\|^2+\|x_i\|^4).\label{eq:globalloss}
\end{align}
\end{lemma}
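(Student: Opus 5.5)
The approach is to compare every fit with the zero vector and with the full-data fit $b$, relying only on strong convexity and the score contraction in \eqref{eq:scorecontraction}. No probabilistic event is used.

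\textbf{Displacement bound.} First note that $m_i^D$ is the minimizer of the $\mu$-strongly convex $F_i$ over the closed convex set $D(b)$, and that $b\in D(b)$. The first-order condition at $m_i^D$ therefore gives
\[
\langle g,\, b-m_i^D\rangle\ge0 \quad\text{for some } g\in\partial F_i(m_i^D).
\]
Strong monotonicity of $\partial F_i$ then yields $\|m_i^D-b\|\le\mu^{-1}\|\xi\|$ for any $\xi\in\partial F_i(b)$. By the full-data KKT condition we have $-c_ix_i\in\partial F_i(b)$, so
\[
\|m_i^D-b\|\le\mu^{-1}|c_i|\|x_i\|.
\]

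This alone gives a factor $2$ when combined with $\|b-b_i\|\le\mu^{-1}|c_i|\|x_i\|$, whereas \eqref{eq:globaldisplacement} has constant $\mu^{-1}$. To get the sharp constant I would argue directly as follows. Since $b\in D(b)$, the point $b$ lies on the segment side of the variational inequality. Consider the projected comparison in the $F_i$-geometry:
\[
\frac{\mu}{2}\|m_i^D-b_i\|^2\le F_i(m_i^D)-F_i(b_i)\le F_i(b)-F_i(b_i).
\]
Using $-c_ix_i\in\partial F_i(b)$ together with strong convexity at $b$,
\[
F_i(b)-F_i(b_i)\le c_ix_i^\top(b_i-b)-\frac{\mu}{2}\|b-b_i\|^2\le \frac{c_i^2\|x_i\|^2}{2\mu}.
\]
The last step maximizes over $\|b-b_i\|$. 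Hence $\|\Delta_i^D\|\le\mu^{-1}|c_i|\|x_i\|$. Since $|c_i|\le|\ell'(y_i;x_i^\top b_i)|\le S_i$, this gives \eqref{eq:globaldisplacement}. Cauchy–Schwarz then gives \eqref{eq:globalprediction}.

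\textbf{Loss bound.} Write $z=x_i^\top b_i$ and $z'=x_i^\top m_i^D$. Taylor's theorem with $0\le\ell''\le L_2$ gives
\[
|\ell(y_i;z)-\ell(y_i;z')|\le|\ell'(y_i;z)||z-z'|+\frac{L_2}{2}|z-z'|^2.
\]
Insert $|\ell'(y_i;z)|\le S_i$ and $|z-z'|\le\mu^{-1}S_i\|x_i\|^2$. The two terms are then bounded by $S_i^2\|x_i\|^2$ and $S_i^2\|x_i\|^4$ respectively, up to constants depending on $\mu$ and $L_2$. This is \eqref{eq:globalloss}.

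\textbf{Main obstacle.} The only delicate point is the sharp constant in \eqref{eq:globaldisplacement}. It requires the objective-gap route through $F_i(b)-F_i(b_i)$, not the triangle inequality. If only some constant $C$ were needed, the triangle inequality would suffice.
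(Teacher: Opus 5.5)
Your proposal is correct and, once you set aside the triangle-inequality detour, it is the paper's argument: feasibility of $b$ in $D(b)$ gives $F_i(m_i^D)\le F_i(b)$, and strong convexity at $b$ with $-c_ix_i\in\partial F_i(b)$ bounds $F_i(b)-F_i(b_i)$ by $c_i^2\|x_i\|^2/(2\mu)$. The score contraction $|c_i|\le|\ell'(y_i;x_i^\top b_i)|\le S_i$ and the second-order Taylor bound with $0\le\ell''\le L_2$ then yield the three inequalities exactly as in the paper.
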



\paragraph{Block proximal geometry.} Following lemma records the derivative and remainder bounds for $\Phi_g$, together with a Bregman identity for comparing two proximal points.

\begin{lemma}[Block proximal identities and examples]\label{lem:prox}
The map $\Phi_g$ is globally $\mu^{-1}$-Lipschitz, vanishes exactly on $K_g$, and on its active exterior
\begin{equation}\label{eq:expanded-display-008}
D\Phi_g(v)=\bigl(\mu I+\lambda_1\nabla^2r_g(\Phi_g(v))\bigr)^{-1},
 \qquad 0\prec D\Phi_g(v)\preceq\mu^{-1}I.
\end{equation}
Define $J_g(v)=D\Phi_g(v)$ off $K_g$ and $J_g(v)=0$ on $K_g$, including its topological boundary $\partial K_g$ in $\R^{d_g}$. If the segment $[v,v+u]$ avoids $\partial K_g$, then
\begin{equation}\label{eq:proxtaylor}
 \|\Phi_g(v+u)-\Phi_g(v)-J_g(v)u\|
 \le\frac{C_\Phi}{2}\|u\|^2.
\end{equation}
For an arbitrary segment the left side is at most $2\mu^{-1}\|u\|$. For $a=\Phi_g(v)$, $a'=\Phi_g(v')$, and $z=(v-\mu a)/\lambda_1$, $z'=(v'-\mu a')/\lambda_1$,
\begin{equation}\label{eq:proxenergy}
 \mu\|a-a'\|^2+\lambda_1D_{r_g}^{z'}(a,a')+
 \lambda_1D_{r_g}^{z}(a',a)=\langle v-v',a-a'\rangle,
\end{equation}
For a convex function $f$ and $\zeta\in\partial f(u_1)$, the Bregman divergence based at $u_1$ is $D_f^\zeta(u_0,u_1)=f(u_0)-f(u_1)-\langle\zeta,u_0-u_1\rangle$.
\end{lemma}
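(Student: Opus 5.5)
The plan is to treat $\Phi_g=(\mu I+\lambda_1\partial r_g)^{-1}$ as the resolvent of a $\mu$-strongly monotone maximal monotone operator and read off each claim from this structure, in the order stated.

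\textbf{Lipschitz bound and zero set.} The operator $\mu I+\lambda_1\partial r_g$ is $\mu$-strongly monotone and maximal monotone, since $r_g$ is finite and convex. Hence $\Phi_g$ is single-valued and everywhere defined. If $v\in\mu a+\lambda_1\partial r_g(a)$ and $v'\in\mu a'+\lambda_1\partial r_g(a')$, monotonicity of $\partial r_g$ gives
\[
\mu\|a-a'\|^2\le\langle v-v',a-a'\rangle\le\|v-v'\|\|a-a'\|,
\]
which is the $\mu^{-1}$-Lipschitz property. Moreover $\Phi_g(v)=0$ if and only if $v\in\lambda_1\partial r_g(0)=K_g$, so $\Phi_g$ vanishes exactly on $K_g$.

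\textbf{Derivative off $K_g$.} Take $v\notin K_g$ and set $a=\Phi_g(v)\neq0$. Since $r_g$ is $C^3$ near $a$, we have $v=\mu a+\lambda_1\nabla r_g(a)$. The map $\Psi(w)=\mu w+\lambda_1\nabla r_g(w)$ is $C^2$ near $a$, and its Jacobian $\mu I+\lambda_1\nabla^2 r_g(a)\succeq\mu I$ is invertible, because convexity makes $\nabla^2 r_g(a)$ positive semidefinite. The inverse function theorem gives a local $C^2$ inverse of $\Psi$. Since $K_g$ is closed, the complement of $K_g$ is open, and $\Phi_g$ is continuous and single-valued there. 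Therefore $\Phi_g$ coincides with this local inverse near $v$, which yields the formula
\[
D\Phi_g(v)=\bigl(\mu I+\lambda_1\nabla^2 r_g(\Phi_g(v))\bigr)^{-1}.
\]
The bounds $0\prec D\Phi_g(v)\preceq\mu^{-1}I$ follow from $\mu I\preceq\mu I+\lambda_1\nabla^2 r_g\prec\infty$.

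\textbf{Taylor remainder \eqref{eq:proxtaylor}.} The first step is a connectivity argument. The set $\R^{d_g}\setminus\partial K_g$ is the disjoint union of the open sets $\operatorname{int}K_g$ and $\R^{d_g}\setminus K_g$. A segment avoiding $\partial K_g$ is connected, so it lies entirely in one of these two sets.
\begin{itemize}
\item If it lies in $\operatorname{int}K_g$, then $\Phi_g(v+u)=\Phi_g(v)=0$ and $J_g(v)=0$, so the left side of \eqref{eq:proxtaylor} vanishes.
\item If it lies in the exterior, $\Phi_g$ is $C^2$ along the segment. The integral form of Taylor's theorem, together with \eqref{eq:bound_on_d2_phi_g}, gives a remainder of at most $\int_0^1(1-s)C_\Phi\|u\|^2\,ds=\tfrac{C_\Phi}{2}\|u\|^2$.
\end{itemize}
For an arbitrary segment, the triangle inequality combines $\|\Phi_g(v+u)-\Phi_g(v)\|\le\mu^{-1}\|u\|$ with $\|J_g(v)u\|\le\mu^{-1}\|u\|$ to give the bound $2\mu^{-1}\|u\|$.

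\textbf{Bregman identity \eqref{eq:proxenergy}.} By construction $z=(v-\mu a)/\lambda_1\in\partial r_g(a)$, and likewise $z'\in\partial r_g(a')$, so both divergences are well defined. Adding the definitions, the $r_g$ values cancel and we get
\[
D_{r_g}^{z'}(a,a')+D_{r_g}^{z}(a',a)=\langle z-z',a-a'\rangle.
\]
Multiplying by $\lambda_1$ and substituting $\lambda_1(z-z')=(v-v')-\mu(a-a')$ gives $\langle v-v',a-a'\rangle-\mu\|a-a'\|^2$. Rearranging this is exactly \eqref{eq:proxenergy}.

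The step I expect to need the most care is identifying $\Phi_g$ with the local inverse-function branch on the exterior of $K_g$, together with the connectivity split. This is the only place where the smoothness hypotheses of Assumption~\ref{ass:blocks} and the topology of $\partial K_g$ interact. The remaining steps are direct algebra.
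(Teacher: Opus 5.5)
Your proposal is correct and follows essentially the same route as the paper: strong monotonicity for the Lipschitz bound and the zero set, the inverse function theorem applied to $a\mapsto\mu a+\lambda_1\nabla r_g(a)$ away from zero, the connectivity split of a segment avoiding $\partial K_g$ into the interior case and the exterior case, and direct algebra for the Bregman identity. The differences are cosmetic. The paper obtains the Lipschitz bound from the energy identity and identifies $\Phi_g$ with the inverse through global injectivity rather than your local continuity argument; it also adds a norm-plus-quadratic example and a scalar sign characterization, neither of which is among the stated claims.
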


\paragraph{Resolvent identities without commutativity.} The loss Hessian and the proximal derivative need not commute. The following lemma provides the resolvent identities and bounds needed to construct and control the deleted-data linear responses.

\begin{lemma}[Noncommuting resolvents]\label{lem:resolvent}
If $H,J$ are symmetric positive semidefinite and $J\preceq\mu^{-1}I$, then
\begin{align}
 B&=(I+JH)^{-1}J=J(I+HJ)^{-1}
  =J^{1/2}(I+J^{1/2}HJ^{1/2})^{-1}J^{1/2},\label{eq:B}\\
 Q&=(I+HJ)^{-1}=I-HB,\qquad Q^\top=I-BH.\label{eq:Q}
\end{align}
In particular $0\preceq B\preceq J$ and $\|Q\|_{\op}\le1+\|H\|_{\op}/\mu$.
\end{lemma}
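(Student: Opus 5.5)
The plan is to verify the three claims in turn: the factorization identities \eqref{eq:B}, the formula for $Q$ in \eqref{eq:Q}, and then the two bounds. First, $I+HJ$ and $I+JH$ are invertible. Since $J\succeq0$, the matrix $J^{1/2}$ is well defined. The nonzero spectrum of $HJ$ coincides with that of $J^{1/2}HJ^{1/2}\succeq0$, so every eigenvalue of $HJ$ is real and nonnegative. Hence $-1$ is not an eigenvalue of $HJ$, and the same argument applies to $JH$. With invertibility in hand, the push-through identity $J(I+HJ)=(I+JH)J$ gives $(I+JH)^{-1}J=J(I+HJ)^{-1}$ after multiplying by the two inverses.

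For the symmetric factorization, I would check directly that $(I+JH)\,J^{1/2}(I+J^{1/2}HJ^{1/2})^{-1}J^{1/2}=J$. Write
\[
(I+JH)J^{1/2}=J^{1/2}\bigl(I+J^{1/2}HJ^{1/2}\bigr).
\]
Substituting this, the middle factor cancels against its inverse and leaves $J^{1/2}J^{1/2}=J$.

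Next comes $Q$. From $(I+HJ)(I-HB)=I+HJ-H(I+JH)B$ and $(I+JH)B=J$, the right side equals $I+HJ-HJ=I$, so $Q=I-HB$. To transpose, I would first note that $B$ is symmetric, which is clear from the $J^{1/2}$ form. Then $Q^\top=(I+JH)^{-1}=I-BH$, using $H^\top=H$.

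The two bounds come next, and the one that needs care is $B\preceq J$. Set $M=J^{1/2}HJ^{1/2}\succeq0$. Then $0\preceq (I+M)^{-1}\preceq I$, and conjugation by $J^{1/2}$ preserves the Loewner order, so $0\preceq B\preceq J$. For the bound on $Q$, we have $\|HB\|_{\op}\le\|H\|_{\op}\|B\|_{\op}\le\|H\|_{\op}\|J\|_{\op}\le\|H\|_{\op}/\mu$. Therefore $\|Q\|_{\op}\le1+\|H\|_{\op}/\mu$.
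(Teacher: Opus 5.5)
Your proof is correct and follows essentially the same route as the paper: the intertwining identity $(I+JH)J^{1/2}=J^{1/2}(I+J^{1/2}HJ^{1/2})$, the direct check $(I+HJ)(I-HB)=I$, the symmetry of $B$ for the transpose, and congruence by $J^{1/2}$ to get $0\preceq B\preceq J$. The only difference is how you obtain invertibility of $I+HJ$ and $I+JH$: you use the fact that $HJ$ and $J^{1/2}HJ^{1/2}$ share their nonzero eigenvalues, whereas the paper avoids spectral facts about nonsymmetric matrices by exhibiting the explicit right inverses $I-B_0H$ and $I-HB_0$, where $B_0=J^{1/2}(I+J^{1/2}HJ^{1/2})^{-1}J^{1/2}$.
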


\begin{lemma}[Conditional Gaussian control]\label{lem:gaussian}
Recall that $\mathcal D_{-i}$ is the $\sigma$-field generated by all observations except observation $i$. On $\mathcal A_i$, $\|b_i\|\le C\sqrt n$, and the conditional probability given $\mathcal D_{-i}$ that the linear-response bound in~\eqref{eq:Gi} fails is at most $Cn^{-20}$. The row-norm bound fails with probability at most $Ce^{-cn}$. Choose $C_G$ to obtain the stated exponent, then choose $D_0$ below. Several later estimates use only the weaker bound $Cn^{-12}$.
\end{lemma}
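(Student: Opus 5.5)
The statement to prove is Lemma~\ref{lem:gaussian}. It has three parts: the bound $\|b_i\|\le C\sqrt n$ on $\mathcal A_i$, the conditional failure probability of the linear-response bound in \eqref{eq:Gi}, and the failure probability of the row-norm bound. I would treat them in that order, conditioning throughout on $\mathcal D_{-i}$. Under this conditioning, $X_i$, $b_i$, $H_i$, $J_i$, $Q_i$ and $B_i$ are all fixed, while $x_i\sim N(0,\Sigma)$ is independent of them.

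\textbf{Bound on $\|b_i\|$.} This follows directly from \eqref{eq:normbound} applied to the ambient deleted estimator. On $\mathcal A_i$,
\[
\mu\|b_i\|\le\|X_i\|_{\op}\|\eta_{-i}\|\le K_X\sqrt{(n-1)V_i}\le K_X\sqrt{2v_+ n},
\]
which gives $\|b_i\|\le C\sqrt n$.

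\textbf{Row-norm bound.} I would write $x_i=\Sigma^{1/2}z$ with $z\sim N(0,I_p)$. Then $\|x_i\|\le\sqrt{C_\Sigma/p}\,\|z\|$. Gaussian concentration of $\|z\|$ around $\sqrt p$, with sub-Gaussian deviations of order $\sqrt p$, gives $\Prob(\|x_i\|>K_x)\le e^{-cp}\le Ce^{-cn}$ once $K_x^2>2C_\Sigma$, using $p\asymp n$.

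\textbf{Linear-response bound.} On $\mathcal A_i$, the loss Hessian satisfies $\|H_i\|_{\op}\le L_2\|X_i\|_{\op}^2\le L_2K_X^2$. Since $J_i$ is PSD with $J_i\preceq\mu^{-1}I$ by Lemma~\ref{lem:prox}, Lemma~\ref{lem:resolvent} gives
\[
\|Q_i\|_{\op}\le 1+L_2K_X^2/\mu,\qquad \|B_i\|_{\op}\le\mu^{-1}.
\]
For each block $g$, the vector $(Q_ix_i)_g=E_g^\top Q_i\Sigma^{1/2}z$ is a Gaussian vector of dimension $d_g\le d_0$. Its covariance has operator norm at most $\|Q_i\|_{\op}^2C_\Sigma/p$. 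The standard norm tail bound then gives
\[
\Prob\bigl(\|(Q_ix_i)_g\|>s\mid\mathcal D_{-i}\bigr)\le 2d_0\exp(-c\,ps^2).
\]
Similarly, each coordinate $(X_iB_ix_i)_j=x_j^\top B_ix_i$ with $j\ne i$ is a centered Gaussian with variance at most $\|x_j\|^2\|B_i\|_{\op}^2C_\Sigma/p\le K_X^2C_\Sigma/(\mu^2p)$, since $\|x_j\|\le\|X_i\|_{\op}\le K_X$. Taking $s=\tfrac12C_Gh_n$, both families of tails become $\exp(-c\,C_G^2\,p\log(en)/n)\le(en)^{-cC_G^2\delta_+^{-1}}$. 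A union bound over at most $G+n\le p+n\le Cn$ terms then yields conditional failure probability at most $Cn^{-20}$, provided $C_G$ is chosen large. After fixing $C_G$, one chooses $D_0\ge4C_G$ exactly as the statement prescribes. The weaker $n^{-12}$ bound follows trivially from the $n^{-20}$ bound.

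\textbf{Main obstacle.} The delicate point is measurability. I need $H_i$ and $J_i$, and hence $Q_i$ and $B_i$, to be $\mathcal D_{-i}$-measurable, so that $x_i$ is genuinely independent of the matrices it is multiplied by. This follows from the measurability of $b_i$ in Lemma~\ref{lem:stability}, together with the definition of $J_g$ as a Borel function (its value on $K_g$ set to $0$). No other step uses more than uniform operator-norm bounds on $\mathcal A_i$.
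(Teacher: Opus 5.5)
Your proposal is correct and follows essentially the same route as the paper. You use \eqref{eq:normbound} on $\mathcal A_i$ for $\|b_i\|$, the resolvent operator bounds from Lemma~\ref{lem:resolvent}, conditional Gaussian tails for $Q_ix_i$ and $X_iB_ix_i$ with a union bound over $O(n)$ terms at a threshold of order $C_Gh_n$, and a Gaussian tail for $\|x_i\|$. The differences are only cosmetic: you bound block norms directly rather than coordinates followed by a $(\sqrt{d_0}+1)$ conversion, you bound the variance of $x_j^\top B_ix_i$ via $\|x_j\|\le K_X$ instead of $\|X_iB_i\|_{\op}$, and you use concentration of $\|z\|$ instead of the exponential moment $\E e^{\|z\|^2/4}$.
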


\paragraph{Density bound from one-sided log-curvature.}
To establish anti-concentration near the activation
boundaries, we first need a uniform density bound
for the conditional clamped dual scores.
The following lemma provides such a bound for
convolutions of independent random vectors under
a one-sided log-curvature condition, without
requiring log-concavity.

\begin{lemma}[Density bound from one-sided log-curvature]
\label{lem:density}
Let $Z_1,\ldots,Z_m\in\R^d$ be independent random
vectors with positive $C^2$ densities $f_j$ satisfying
$-D^2\log f_j\preceq\gamma_j I$, where $\gamma_j>0$.
For real $\theta_j$ with $\sum_{j=1}^m\theta_j=1$,
set $\Gamma=\sum_{j=1}^m\gamma_j\theta_j^2$.
The convolution density $f$ of $\sum_{j=1}^m Z_j$
satisfies
\begin{equation}\label{eq:expanded-display-011}
\|f\|_\infty\le
\left(\frac{\Gamma}{2\pi}\right)^{d/2},
\end{equation}
where the norm is the essential supremum.
\end{lemma}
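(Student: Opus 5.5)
The plan is to avoid any log-concavity or Prékopa-type argument. Instead, I will prove a symmetrized lower bound
\[
f(x_0+y)+f(x_0-y)\ \ge\ 2f(x_0)\,e^{-\Gamma\|y\|^2/2}
\qquad\text{for every }x_0,y,
\]
and then integrate it in $y$. Integrating the left side over $y\in\R^d$ gives $2$, since $f$ is a probability density. Integrating the right side gives $2f(x_0)(2\pi/\Gamma)^{d/2}$. Together these yield $f(x_0)\le(\Gamma/2\pi)^{d/2}$ for every $x_0$, which implies the essential-supremum bound in \eqref{eq:expanded-display-011}.

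\textbf{Step 1: a shifted parametrization of the convolution.} Fix $x$ and change variables by $z_j=\theta_jx+w_j$ for $j<m$. The last variable is $z_m=x-\sum_{j<m}z_j=\theta_mx+w_m$ with $w_m:=-\sum_{j<m}w_j$; this uses $\sum_j\theta_j=1$. The map $w\mapsto z$ is a translation, so it has unit Jacobian and Tonelli applies. This gives, for every $x$,
\[
f(x)=\int_{\R^{d(m-1)}}\prod_{j=1}^m f_j(\theta_jx+w_j)\,dw_1\cdots dw_{m-1}.
\]
The key feature is that, for fixed $w$, the integrand depends on $x$ only through the shifts $\theta_jx$. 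This is where the weights $\theta_j$ enter, and negative $\theta_j$ cause no problem.

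\textbf{Step 2: pointwise symmetrization for each fixed $w$.} Fix $w$ and $x_0$, and set $G(y)=\prod_jf_j(\theta_j(x_0+y)+w_j)$. Since each $f_j$ is positive and $C^2$, $\log G$ is $C^2$. Its Hessian satisfies $-D^2_y\log G=\sum_j\theta_j^2(-D^2\log f_j)\preceq\Gamma I$. Taylor's formula with this one-sided bound gives
\[
\log G(\pm y)\ge\log G(0)\pm\langle\nabla\log G(0),y\rangle-\Gamma\|y\|^2/2 .
\]
Exponentiate and add the two inequalities, then use $e^{a}+e^{-a}\ge2$ to cancel the linear term. This gives $G(y)+G(-y)\ge2G(0)e^{-\Gamma\|y\|^2/2}$. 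Integrating over $w$ with the representation from Step 1, which is valid at the three points $x_0$ and $x_0\pm y$, yields the displayed symmetrized inequality for $f$.

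\textbf{Step 3: conclusion and the main obstacle.} The final integration over $y$ is routine once Step 2 is in place. The main care point is Step 1: the representation must hold at every point $x$, not only almost everywhere. This is what allows the pointwise inequality at $x_0$ and $x_0\pm y$ simultaneously. It holds because the right side is a well-defined (possibly infinite) Tonelli integral of a positive measurable function. Step 2 then shows that any point with $f(x_0)=\infty$ would make $\int f=\infty$, which rules out infinite values. Since the bound holds at every point, it holds in particular for the essential supremum.
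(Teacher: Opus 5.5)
Your proposal is correct and takes essentially the same approach as the paper: you prove the symmetrized midpoint inequality $\tfrac12\{f(x_0+y)+f(x_0-y)\}\ge f(x_0)e^{-\Gamma\|y\|^2/2}$ by applying the one-sided curvature bound to the product of the factor densities before integrating over the convolution fiber, using $\sum_j\theta_j=1$ to absorb the shifts $\theta_j y$, and then integrate in $y$. The only difference is cosmetic: you parametrize the fiber as $z_j=\theta_jx+w_j$ and bound the Hessian of the product $G$ directly, while the paper applies the bound to each $\log f_j$ separately and then translates $z_j\mapsto z_j+\theta_jh$ inside the fiber integral.
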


\paragraph{Volume near convex activation boundaries.}
The preceding density bound is combined with a
geometric estimate for neighborhoods of convex
boundaries. The following lemma shows that the
volume of such a neighborhood is proportional
to its width when the dimension and the
inner and outer radii are uniformly bounded.
Together, these two estimates are used to prove
Lemma~\ref{lem:antic}.

\begin{lemma}[Tube volume]\label{lem:tube}
Let $K\subseteq\R^d$ be closed and convex, with
$B(0,r_0)\subseteq K\subseteq B(0,R_0)$
for $0<r_0\le R_0<\infty$, where
$B(0,r)=\{x\in\R^d:\|x\|\le r\}$.
Then, for $0<\varepsilon<r_0$,
\begin{equation}\label{eq:expanded-display-012}
\{z\in\R^d:\dist(z,\partial K)\le\varepsilon\}
\subseteq
(1+\varepsilon/r_0)K
\setminus
\operatorname{int}((1-\varepsilon/r_0)K),
\end{equation}
where $\partial K$ and $\operatorname{int}K$ denote
the topological boundary and interior of $K$ in $\R^d$,
respectively.
Let $\operatorname{vol}$ denote $d$-dimensional Lebesgue measure.
For bounded $d,r_0^{-1},R_0$, the tube volume satisfies
\begin{equation*}
\operatorname{vol}
\bigl(\{z\in\R^d:\dist(z,\partial K)\le\varepsilon\}\bigr)
\le C\varepsilon,
\end{equation*}
where $C$ depends only on these bounds.
\end{lemma}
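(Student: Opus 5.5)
The plan is to prove the set inclusion first and then derive the volume bound from it by a scaling argument. Let $\|\cdot\|_K$ denote the gauge (Minkowski functional) of $K$. Since $B(0,r_0)\subseteq K\subseteq B(0,R_0)$, the gauge is well defined and
\[
\|z\|/R_0\le\|z\|_K\le\|z\|/r_0 .
\]
The gauge is also subadditive and positively homogeneous, so $\|z\|_K-\|w\|_K\le\|z-w\|_K\le\|z-w\|/r_0$, i.e.\ it is $r_0^{-1}$-Lipschitz. Because $K$ is closed and convex with $0$ in its interior, we have $\partial K=\{w:\|w\|_K=1\}$, $K=\{\|z\|_K\le1\}$, and $\operatorname{int}(sK)=\{\|z\|_K<s\}$ for $s>0$.

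For the inclusion, take $z$ with $\dist(z,\partial K)\le\varepsilon$. Since $\partial K$ is compact, the distance is attained at some $w\in\partial K$ with $\|z-w\|\le\varepsilon$. The Lipschitz bound gives $|\|z\|_K-1|\le\varepsilon/r_0$. This means $\|z\|_K\le1+\varepsilon/r_0$, so $z\in(1+\varepsilon/r_0)K$. It also means $\|z\|_K\ge1-\varepsilon/r_0$, so $z$ is not in the open set $\{\|z\|_K<1-\varepsilon/r_0\}=\operatorname{int}((1-\varepsilon/r_0)K)$. The hypothesis $\varepsilon<r_0$ ensures that $1-\varepsilon/r_0>0$, so the scaled set makes sense.

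For the volume bound, write $s_\pm=1\pm\varepsilon/r_0$. Scaling gives $\vol(sK)=s^d\vol(K)$. The boundary of a convex body has Lebesgue measure zero, so $\vol(\operatorname{int}(s_-K))=s_-^d\vol(K)$. Hence the tube has volume at most $(s_+^d-s_-^d)\vol(K)$.

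By the mean value theorem,
\[
s_+^d-s_-^d\le d\,s_+^{d-1}\cdot 2\varepsilon/r_0\le d\,2^{d-1}\cdot 2\varepsilon/r_0 ,
\]
using $s_+\le2$. Combined with $\vol(K)\le\vol(B(0,R_0))=\omega_dR_0^d$, this gives
\[
\vol\le C\varepsilon,\qquad C=d\,2^d\omega_dR_0^d/r_0 ,
\]
which depends only on $d$, $r_0^{-1}$ and $R_0$, as required.

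The only delicate point is the identification of $\partial K$ with the unit level set of the gauge and of $\operatorname{int}(sK)$ with the strict sublevel set. I would justify this as follows. If $\|w\|_K<1$, then $w$ lies in the interior of the convex hull of $B(0,r_0)$ and a point $w/t\in K$ with $t<1$, hence in $\operatorname{int}K$. If $\|w\|_K=1$, then the points $(1+\delta)w$ lie outside $K$ and converge to $w$, so $w\in\partial K$. Everything else is routine.
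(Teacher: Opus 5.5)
Your proof is correct, and its volume half is the same as the paper's: bound the tube by $(1+a)K\setminus\operatorname{int}((1-a)K)$ with $a=\varepsilon/r_0$, use $(1+a)^d-(1-a)^d\le 2ad\,2^{d-1}$, and use $\vol(K)\le\vol(B(0,R_0))$. This gives the same constant $d2^d\vol(B(0,R_0))/r_0$.

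The difference is how you obtain the inclusion. You put the inner radius into the single fact that the gauge $\|\cdot\|_K$ is $r_0^{-1}$-Lipschitz. You then identify $\partial K$ with $\{\|\cdot\|_K=1\}$ and $\operatorname{int}(sK)$ with $\{\|\cdot\|_K<s\}$. The one inequality $\bigl|\|z\|_K-1\bigr|\le\varepsilon/r_0$ then gives both halves of the inclusion at once. The paper never introduces the gauge. It uses $B(0,\varepsilon)\subseteq aK$ together with the Minkowski identities $K+aK=(1+a)K$ and $(1-a)K+aK=K$. The outer half comes from $z\in K+B(0,\varepsilon)$. The inner half comes from exhibiting an explicit ball of radius $\varepsilon+\delta$ inside $K$ around any point of $\operatorname{int}((1-a)K)$. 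That route needs only $\partial K\subseteq K$, not a full description of $\partial K$. Yours needs the level-set characterization, which you justify adequately.

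The paper also proves, via $(1-\tau)K\subseteq\operatorname{int}K$, that $K$ and $\operatorname{int}K$ have equal volume; you cite this as a standard fact. Your gauge description would give it just as quickly, since $\partial(sK)\subseteq sK\setminus(1-\tau)sK$ for every $\tau\in(0,1)$.
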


\paragraph{Control of irregular deleted datasets.}
Lemma~\ref{lem:gaussian} controls the held-out
Gaussian responses when the deleted data satisfy
$\mathcal A_i$. We also need to control the
contribution from datasets where $\mathcal A_i$
fails.

\begin{lemma}[Weighted control of bad deleted datasets]\label{lem:baddeleted}
Uniformly in $i$,
\begin{align}
 \Prob(\mathcal A_i^c)&\le Cn^{-1/2},\label{eq:badprob}\\
 \E\left[\left(1+\frac{\|b_i\|^2}{p}\right)\ind_{\mathcal A_i^c}\right]
 &\le Cn^{-1/2}.\label{eq:weightedbad}
\end{align}
\end{lemma}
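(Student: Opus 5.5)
The plan is to split $\mathcal A_i^c$ into the spectral failure $\{\|X_i\|_{\op}>K_X\}$ and the energy failures $\{V_i<v_-/2\}$ and $\{V_i>2v_+\}$, and to bound each piece separately.

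\emph{Probability bound.} By the Gaussian net argument for a design with rows $N(0,\Sigma)$ and $\Sigma\preceq C_\Sigma p^{-1}I$, together with $n\asymp p$, we get $\Prob(\|X_i\|_{\op}>K_X)\le Ce^{-cn}$ once $K_X$ is a large fixed constant. For the energy, $V_i$ is the mean of $n-1$ i.i.d.\ variables $\eta_j^2$ with mean $v\in[v_-,v_+]$. Only $\E|\eta_j|^3\le K_3$ is available, so $\eta_j^2$ has only a finite $3/2$ moment. I would therefore use a von Bahr--Esseen (Marcinkiewicz--Zygmund) inequality with exponent $3/2$:
\[
\E|V_i-v|^{3/2}\le C(n-1)^{-1/2}\E|\eta_j^2-v|^{3/2}\le Cn^{-1/2}.
\]
Markov's inequality then gives $\Prob(|V_i-v|>v_-/2)\le Cn^{-1/2}$. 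Since $v_-\le v\le v_+$, this event contains both energy failures, and \eqref{eq:badprob} follows.

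\emph{Weighted bound.} From the remark after Lemma~\ref{lem:stability}, $\|b_i\|\le\mu^{-1}\|X_i\|_{\op}\|\eta_{-i}\|$, so
\[
\frac{\|b_i\|^2}{p}\le C\|X_i\|_{\op}^2\,\frac{n}{p}\,V_i\le C\|X_i\|_{\op}^2 V_i .
\]
The quantity to control is therefore $\E[(1+\|X_i\|_{\op}^2V_i)\ind_{\mathcal A_i^c}]$. The design and the scores are not independent, because $y_j$ depends on $x_j$. I would handle this with H\"older's inequality. All moments of $\|X_i\|_{\op}$ are bounded, and the spectral failure has exponentially small probability. The term $\E[\|X_i\|_{\op}^2V_i\ind\{\|X_i\|_{\op}>K_X\}]$ is then at most $(\E\|X_i\|_{\op}^{6}\,\Prob(\cdot))^{1/3}(\E V_i^{3/2})^{2/3}\le Ce^{-cn}$, using $\E V_i^{3/2}\le C$ (Jensen gives $\E V_i^{3/2}\le\E|\eta_j|^3$).

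On the spectral-good part of the energy failure, $\|X_i\|_{\op}\le K_X$, so it suffices to bound $\E[(1+V_i)\ind\{|V_i-v|>v_-/2\}]$. The constant part is the probability already handled. For the $V_i$ part, write $V_i\le v+|V_i-v|$ and use
\[
\E\big[|V_i-v|\ind\{|V_i-v|>v_-/2\}\big]\le (v_-/2)^{-1/2}\,\E|V_i-v|^{3/2}\le Cn^{-1/2}.
\]
Summing all the pieces gives \eqref{eq:weightedbad}, with constants uniform in $i$ because the observations are exchangeable.

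\emph{Main obstacle.} The hard step is the energy deviation under only a third moment. Chebyshev's inequality would need a fourth moment of $\eta$, so the $3/2$-moment inequality for centered i.i.d.\ sums is the key tool. It requires $1\le r\le2$, which holds here with $r=3/2$, and it produces exactly the $n^{-1/2}$ rate in both \eqref{eq:badprob} and \eqref{eq:weightedbad}.
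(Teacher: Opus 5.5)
Your proposal is correct and follows essentially the same route as the paper: the same split of $\mathcal A_i^c$ into spectral and energy failures, a $3/2$-moment inequality for centered sums (which the paper proves directly by symmetrization instead of citing von Bahr--Esseen), the bound $\|b_i\|^2/p\le C\|X_i\|_{\op}^2V_i$, and H\"older with exponents $3/2$ and $3$ on the spectral failure. The only blemish is notational: in that H\"older step the factor should read $(\E[\|X_i\|_{\op}^6\ind_{\{\|X_i\|_{\op}>K_X\}}])^{1/3}$ (or be split further by Cauchy--Schwarz), not $(\E\|X_i\|_{\op}^6\,\Prob(\cdot))^{1/3}$, which would require independence; the $Ce^{-cn}$ conclusion is unaffected.
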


\section{Proofs for Section~\ref{Proofs of preliminary results}}

\subsection{Proof of Lemma~\ref{lem:global}}

\begin{proof}
Fix one of the two domains and abbreviate its optimizer by $m_i$. Since $b$ is feasible, exact minimization gives
\begin{equation}\label{eq:global-feasibility}
 F_i(m_i)\le F_i(b).
\end{equation}
Use $-c_ix_i\in\partial F_i(b)$ in the strong-convexity inequality at $b$:
\begin{equation}\label{eq:global-gap-square}
 \begin{aligned}
 F_i(b)-F_i(b_i)
 &\le c_ix_i^\top(b_i-b)-\frac\mu2\|b_i-b\|^2\\
 &=\frac{c_i^2\|x_i\|^2}{2\mu}
   -\frac\mu2\left\|b_i-b-\frac{c_i}{\mu}x_i\right\|^2
 \le\frac{c_i^2\|x_i\|^2}{2\mu}.
 \end{aligned}
\end{equation}
On the other hand, $0\in\partial F_i(b_i)$, so
\begin{equation}\label{eq:global-gap-lower}
 \frac\mu2\|m_i-b_i\|^2
 \le F_i(m_i)-F_i(b_i)
 \le F_i(b)-F_i(b_i).
\end{equation}
Combining~\eqref{eq:global-gap-square}
and~\eqref{eq:global-gap-lower},
and then using the score-contraction bound
in~\eqref{eq:scorecontraction}, gives
\begin{equation}\label{eq:global-displacement-detail}
 \|\Delta_i\|\le\frac{|c_i|}{\mu}\|x_i\|
 \le\frac{|\ell'(y_i;x_i^\top b_i)|}{\mu}\|x_i\|
 \le\frac{S_i}{\mu}\|x_i\|.
\end{equation}
This proves~\eqref{eq:globaldisplacement}. Pairing with $x_i$ proves~\eqref{eq:globalprediction}.

Put $z_i=x_i^\top b_i$ and $a_i=x_i^\top\Delta_i$, so $x_i^\top m_i=z_i-a_i$. Taylor's integral identity and the curvature upper bound give
\begin{equation}\label{eq:global-loss-taylor}
 \begin{aligned}
 \ell(y_i;z_i-a_i)-\ell(y_i;z_i)
 &=-a_i\ell'(y_i;z_i)
   +a_i^2\int_0^1(1-s)\ell''(y_i;z_i-sa_i)\,ds,\\
 |\ell(y_i;z_i-a_i)-\ell(y_i;z_i)|
 &\le S_i|a_i|+\frac{L_2}{2}|a_i|^2\\
 &\le\frac{S_i^2}{\mu}\|x_i\|^2
       +\frac{L_2S_i^2}{2\mu^2}\|x_i\|^4.
 \end{aligned}
\end{equation}
The lower bound on $\mu$ (Assumption~\ref{ass:hperparam}) makes the last expression at most the right side of~\eqref{eq:globalloss}. No independence, support agreement, or probabilistic event has been used.
\end{proof}

\subsection{Proof of Proposition~\ref{eq:expanded-display-007}}
\begin{proof}
\noindent\textbf{Step 1: equality of the two optimal values.}
Let $m=m_i^C$. Recall that $\text{cl}$ denotes closure. Since $m\in C(b)=\cl\mathcal M^\circ(b)$,
there exists a sequence $(a_k)_{k\ge1}\subseteq\mathcal M^\circ(b)$ such that $a_k\to m$. Continuity of $F_i$ gives $F_i(a_k)\to F_i(m)$. Since $\mathcal M^\circ(b)\subseteq C(b)$,
\begin{equation}\label{eq:attainment-infima-detail}
\min_{\beta\in C(b)}F_i(\beta)
\le
\inf_{\beta\in\mathcal M^\circ(b)}F_i(\beta)
\le
\lim_{k\to\infty}F_i(a_k)
=
F_i(m)
=
\min_{\beta\in C(b)}F_i(\beta).
\end{equation}
Consequently,
\begin{equation*}
\inf_{\beta\in\mathcal M^\circ(b)}F_i(\beta)=F_i(m).
\end{equation*}
If $m\in\mathcal M^\circ(b)$, this infimum is attained at $m$. Conversely, if $u\in\mathcal M^\circ(b)$ satisfies
\begin{equation*}
F_i(u)=\inf_{\beta\in\mathcal M^\circ(b)}F_i(\beta),
\end{equation*}
then $u\in C(b)$ and $F_i(u)=F_i(m)$, so uniqueness of the minimizer on $C(b)$ gives $u=m$. Thus
\begin{equation*}
\left\{
u\in\mathcal M^\circ(b):
F_i(u)=\inf_{\beta\in\mathcal M^\circ(b)}F_i(\beta)
\right\}
=
\begin{cases}
\{m\},&m\in\mathcal M^\circ(b),\\
\varnothing,&m\notin\mathcal M^\circ(b).
\end{cases}
\end{equation*}

\noindent\textbf{Step 2: an admissible example without attainment.}
Take
\begin{equation*}
(x_j,y_j)\overset{\mathrm{i.i.d.}}{\sim}
N(0,I_p/p)\otimes N(0,1),
\qquad j=1,\ldots,n,
\qquad
\beta^*=0,
\end{equation*}
and set
\begin{equation*}
\ell(y;z)=-z,
\qquad
G=p,
\qquad
d_g=1,
\qquad
r_g(t)=|t|
\quad(g=1,\ldots,p),
\qquad
r(\beta)=\sum_{g=1}^p|\beta_g|.
\end{equation*}
Keep $\lambda_1$ and $\mu=2\lambda_2$ in their prescribed deterministic parameter ranges. Write
\begin{equation*}
0<\underline{\lambda}_1\le\lambda_1\le\overline{\lambda}_1<\infty,
\qquad
\delta_-\le\frac np\le\delta_+,
\end{equation*}
where $\underline{\lambda}_1$ and $\overline{\lambda}_1$ are fixed bounds for the permitted values of $\lambda_1$.

The function $\ell(y;\cdot)$ is finite and affine, and
\begin{equation}\label{eq:attainment-example-assumptions}
\ell'(y;z)=-1,
\qquad
\ell''(y;z)=0,
\qquad
\eta_j=-1,
\qquad
\E|\eta_j|^3=\E\eta_j^2=1.
\end{equation}
Define the response density for every $(y,u)\in\mathbb R^2$ by
\begin{equation*}
q(y\mid u)=\frac{1}{\sqrt{2\pi}}\exp\left(-\frac{y^2}{2}\right).
\end{equation*}
Then $q(y\mid u)>0$ on the common support $\mathbb R$, and
\begin{equation*}
\partial_u^2\log q(y\mid u)=0,
\qquad (y,u)\in\mathbb R^2.
\end{equation*}
For each $g\in\{1,\ldots,p\}$,
\begin{equation*}
\partial r_g(0)=[-1,1],
\qquad
K_g=\lambda_1\partial r_g(0)=[-\lambda_1,\lambda_1],
\end{equation*}
and the map $\Phi_g$ defined earlier is
\begin{equation*}
\Phi_g(v)
=
\frac{\operatorname{sign}(v)}{\mu}
(|v|-\lambda_1)_+,
\end{equation*}
where we denote $u_+:=\max\{u,0\}$. For $v\notin K_g$,
\begin{equation*}
\Phi_g'(v)=\mu^{-1},
\qquad
\Phi_g''(v)=0.
\end{equation*}
In particular,
\begin{equation*}
\sup_{1\le g\le p}\sup_{v\notin K_g}|\Phi_g''(v)|=0,
\end{equation*}
which verifies the required proximal smoothness bound.

Fix the deletion index $i=1$ and define
\begin{equation*}
A_k=\sum_{j=2}^n x_{jk},
\qquad
B_k=x_{1k},
\qquad k=1,\ldots,p.
\end{equation*}
The objectives are
\begin{equation*}
\begin{aligned}
F(\beta)
&=
\sum_{k=1}^p
\left\{
\frac{\mu}{2}\beta_k^2
+\lambda_1|\beta_k|
-(A_k+B_k)\beta_k
\right\},\\
F_1(\beta)
&=
\sum_{k=1}^p
\left\{
\frac{\mu}{2}\beta_k^2
+\lambda_1|\beta_k|
-A_k\beta_k
\right\}.
\end{aligned}
\end{equation*}
Minimizing each summand gives
\begin{equation}\label{eq:attainment-soft-threshold}
b_k
=
\frac{\operatorname{sign}(A_k+B_k)}{\mu}
(|A_k+B_k|-\lambda_1)_+,
\qquad
b_{1,k}
=
\frac{\operatorname{sign}(A_k)}{\mu}
(|A_k|-\lambda_1)_+.
\end{equation}

Assume $p>\underline{\lambda}_1^{-2}$, so that $\lambda_1-p^{-1/2}>0$. For each $k$, define
\begin{equation}\label{eq:attainment-coordinate-event}
H_k
=
\{\lambda_1-p^{-1/2}<A_k<\lambda_1,\ B_k>p^{-1/2}\}.
\end{equation}
On $H_k$,
\begin{equation*}
A_k+B_k>\lambda_1,
\qquad
0<A_k<\lambda_1,
\qquad
b_k>0,
\qquad
b_{1,k}=0.
\end{equation*}
In this singleton-block setting,
\begin{equation*}
\mathcal M^\circ(b)
=
\left\{
\beta\in\mathbb R^p:
\begin{array}{ll}
\beta_h=0&\text{if }b_h=0,\\
b_h\beta_h>0&\text{if }b_h\ne0
\end{array}
\text{ for every }h=1,\ldots,p
\right\}.
\end{equation*}
Thus, on $H_k$, every $\beta\in\mathcal M^\circ(b)$ satisfies $\beta_k>0$. For $t>0$, the $k$th summand of $F_1$ is
\begin{equation}\label{eq:attainment-ray-objective}
f_k(t)
=
\frac{\mu}{2}t^2+(\lambda_1-A_k)t,
\qquad
f_k'(t)
=
\mu t+\lambda_1-A_k>0
\quad\text{on }H_k.
\end{equation}
For any $\beta\in\mathcal M^\circ(b)$, define $\beta^{(k)}\in\mathbb R^p$ by
\begin{equation*}
\beta_h^{(k)}
=
\begin{cases}
\beta_k/2,&h=k,\\
\beta_h,&h\ne k,
\end{cases}
\qquad h=1,\ldots,p.
\end{equation*}
On $H_k$, we have
\begin{equation*}
\beta^{(k)}\in\mathcal M^\circ(b),
\qquad
F_1(\beta^{(k)})-F_1(\beta)
=
f_k(\beta_k/2)-f_k(\beta_k)<0.
\end{equation*}
Hence no $\beta\in\mathcal M^\circ(b)$ attains $\inf_{\mathcal M^\circ(b)}F_1$ on $H_k$. By Step 1,
\begin{equation*}
H_k\subseteq\{m_1^C\notin\mathcal M^\circ(b)\}.
\end{equation*}

The Gaussian design gives
\begin{equation*}
(A_k,B_k)\overset{\mathrm{i.i.d.}}{\sim}
N\left(0,\frac{n-1}{p}\right)\otimes
N\left(0,\frac1p\right),
\qquad k=1,\ldots,p.
\end{equation*}
Since $n\ge2$,
\begin{equation*}
\frac{\delta_-}{2}
\le
\frac{n-1}{p}
\le
\delta_+.
\end{equation*}
Define the fixed positive constants
\begin{equation*}
c
=
\frac{1}{\sqrt{2\pi\delta_+}}
\exp\left(-\frac{\overline{\lambda}_1^2}{\delta_-}\right),
\qquad
c'
=
c\int_1^\infty
\frac{1}{\sqrt{2\pi}}\exp\left(-\frac{v^2}{2}\right)\,dv.
\end{equation*}
For every $t\in(\lambda_1-p^{-1/2},\lambda_1)\subseteq(0,\overline{\lambda}_1)$,
\begin{equation*}
\sqrt{\frac{p}{2\pi(n-1)}}
\exp\left(-\frac{pt^2}{2(n-1)}\right)
\ge c.
\end{equation*}
Independence of $A_k$ and $B_k$ therefore gives
\begin{equation}\label{eq:attainment-event-probability}
\begin{aligned}
\Prob(H_k)
&=
\Prob(\lambda_1-p^{-1/2}<A_k<\lambda_1)
\Prob(\sqrt p\,B_k>1)\\
&\ge
cp^{-1/2}
\int_1^\infty
\frac{1}{\sqrt{2\pi}}\exp\left(-\frac{v^2}{2}\right)\,dv\\
&=
c'p^{-1/2}.
\end{aligned}
\end{equation}
Since the pairs $(A_k,B_k)$ are independent across $k$, the events $H_1,\ldots,H_p$ are independent. Consequently,
\begin{equation}\label{eq:attainment-failure-probability}
\begin{aligned}
\Prob\bigl(m_1^C\notin\mathcal M^\circ(b)\bigr)
&\ge
\Prob\left(\bigcup_{k=1}^p H_k\right)\\
&=
1-\prod_{k=1}^p(1-\Prob(H_k))\\
&\ge
1-\exp\left(-\sum_{k=1}^p\Prob(H_k)\right)\\
&\ge
1-\exp(-c'\sqrt p).
\end{aligned}
\end{equation}
Thus the stated assumptions admit a model for which
\begin{equation*}
\Prob\bigl(m_1^C\notin\mathcal M^\circ(b)\bigr)\longrightarrow1
\qquad(p\longrightarrow\infty).
\end{equation*}
By Step 1, this is the probability that $\inf_{\beta\in\mathcal M^\circ(b)}F_1(\beta)$ is not attained. The example uses $\ell(y;z)=-z$ and makes no assertion about attainment for $\ell(y;z)=\tfrac12(y-z)^2$.
\end{proof}

\subsection{Proof of Lemma~\ref{lem:prox}}

\begin{proof}
\noindent\textbf{Step 1: proximal optimality and the energy identity.}
For each input $v$, the strictly convex, coercive function
\begin{equation}\label{eq:prox-primal-objective}
 a\longmapsto\frac\mu2\|a\|^2+\lambda_1r_g(a)-\langle v,a\rangle
\end{equation}
has a unique minimizer. Its KKT equation is
\begin{equation}\label{eq:prox-optimality-expanded}
 v=\mu a+\lambda_1z,\qquad z\in\partial r_g(a).
\end{equation}
Thus this minimizer is exactly $\Phi_g(v)$. For two inputs, subtract the equations and pair with $a-a'$:
\begin{equation}\label{eq:prox-energy-expanded}
 \begin{aligned}
 \langle v-v',a-a'\rangle
 &=\mu\|a-a'\|^2+\lambda_1\langle z-z',a-a'\rangle,\\
 D_{r_g}^{z'}(a,a')+D_{r_g}^{z}(a',a)
 &=r_g(a)-r_g(a')-\langle z',a-a'\rangle\\
 &\quad+r_g(a')-r_g(a)-\langle z,a'-a\rangle\\
 &=\langle z-z',a-a'\rangle.
 \end{aligned}
\end{equation}
This proves~\eqref{eq:proxenergy}. Both Bregman divergences are nonnegative by the subgradient inequality. Hence
\begin{equation}\label{eq:prox-lipschitz-expanded}
 \mu\|a-a'\|^2\le\langle v-v',a-a'\rangle
 \le\|v-v'\|\|a-a'\|,
 \qquad
 \|\Phi_g(v)-\Phi_g(v')\|\le\mu^{-1}\|v-v'\|.
\end{equation}
At $a=0$, equation~\eqref{eq:prox-optimality-expanded} is equivalent to $v\in\lambda_1\partial r_g(0)=K_g$. Uniqueness therefore gives
\begin{equation}\label{eq:prox-zero-set-expanded}
 \Phi_g(v)=0\quad\Longleftrightarrow\quad v\in K_g.
\end{equation}

For a singleton block, write $K_g=[\kappa_g^-,\kappa_g^+]$. Monotonicity of $\partial r_g$ between $a$ and zero gives, for any $z_0\in\partial r_g(0)$, $(z-z_0)a\ge0$. Choose the largest such $z_0$ when $a>0$ and the smallest when $a<0$. Equation~\eqref{eq:prox-optimality-expanded} then yields
\begin{equation}\label{eq:prox-scalar-sign-components}
 a>0\ \Longrightarrow\ v\ge\mu a+\kappa_g^+>\kappa_g^+,
 \qquad
 a<0\ \Longrightarrow\ v\le\mu a+\kappa_g^-<\kappa_g^-.
\end{equation}
Together with the zero-set characterization, this also proves the converse sign assignments for the two exterior intervals.

\noindent\textbf{Step 2: the derivative on the active exterior.}
For $a\ne0$, define $T_g(a)=\mu a+\lambda_1\nabla r_g(a)$. Convexity of $r_g$ implies
\begin{equation}\label{eq:prox-forward-derivative}
 DT_g(a)=\mu I+\lambda_1\nabla^2r_g(a)\succeq\mu I\succ0.
\end{equation}
The map is injective: if $T_g(a)=T_g(a')$, strong monotonicity in~\eqref{eq:prox-lipschitz-expanded} forces $a=a'$. Every $v\notin K_g$ has a nonzero proximal solution, and that solution satisfies $T_g(a)=v$. Thus $T_g$ maps the nonzero coefficient region bijectively onto the active exterior. Since $DT_g(a)$ is invertible for every $a\neq0$, applying the inverse function theorem to $T_g$ at each nonzero $a$ gives its inverse $\Phi_g$ is $C^2$ on $\mathbb R^{d_g}\setminus K_g$, with
\begin{equation}\label{eq:prox-nverse-derivative}
 D\Phi_g(v)=\bigl[DT_g(\Phi_g(v))\bigr]^{-1}
 =\bigl(\mu I+\lambda_1\nabla^2r_g(\Phi_g(v))\bigr)^{-1},
 \qquad 0\prec D\Phi_g(v)\preceq\mu^{-1}I.
\end{equation}
The inverse function is $C^2$ because $r_g$ is $C^3$ off zero.

\noindent\textbf{Step 3: Taylor remainders, with the boundary cases separated.}
If the segment $\{v+su:0\le s\le1\}$ is contained in the active exterior, the integral Taylor formula gives
\begin{equation}\label{eq:prox-taylor-integral}
 \Phi_g(v+u)-\Phi_g(v)-D\Phi_g(v)u
 =\int_0^1(1-s)D^2\Phi_g(v+su)[u,u]\,ds.
\end{equation}
By the bounds on $\|D^2\Phi_g\|$ given by \eqref{eq:bound_on_d2_phi_g}, its norm is at most $C_\Phi\|u\|^2/2$. If the segment lies in the interior of $K_g$, both proximal values and $J_g(v)$ vanish. A segment avoiding $\partial K_g$ must be in one of these two regions: otherwise continuity of the segment would force a boundary intersection. This proves~\eqref{eq:proxtaylor}.

For a segment that may meet the boundary, no differentiability along the segment is asserted. Instead, the global Lipschitz bound and the definition of $J_g$ imply
\begin{equation}\label{eq:prox-arbitrary-segment}
 \begin{aligned}
 \|\Phi_g(v+u)-\Phi_g(v)-J_g(v)u\|
 &\le\|\Phi_g(v+u)-\Phi_g(v)\|+\|J_g(v)u\|\\
 &\le2\mu^{-1}\|u\|.
 \end{aligned}
\end{equation}
This includes inputs exactly on the activation boundary, where $J_g$ was defined to be zero.

\noindent\textbf{Step 4: norm-plus-quadratic blocks.}
For $r_g(a)=w_g\|a\|+\gamma_g\|a\|^2/2$, put $a_g=\mu+\lambda_1\gamma_g$ and $k_g=\lambda_1w_g$. At a nonzero solution, optimality reads
\begin{equation}\label{eq:prox-norm-optimality}
 v=a_g a+k_g\frac{a}{\|a\|}
   =\left(a_g+\frac{k_g}{\|a\|}\right)a.
\end{equation}
Thus $v$ and $a$ have the same direction and $\|v\|=a_g\|a\|+k_g$. At zero the optimality condition is $\|v\|\le k_g$. Solving these two cases yields
\begin{equation}\label{eq:prox-norm-formula}
 \Phi_g(v)=
 \begin{cases}
 a_g^{-1}(1-k_g/\|v\|)v,&\|v\|>k_g,\\
 0,&\|v\|\le k_g.
 \end{cases}
\end{equation}
For $r=\|v\|>k_g$ and $e=v/r$, differentiation gives
\begin{equation}\label{eq:prox-norm-jacobian}
 D\Phi_g(v)=a_g^{-1}\bigl[(1-k_g/r)I+(k_g/r)ee^\top\bigr].
\end{equation}
For a direction $h$, set $h_\parallel=e^\top h$ and $h_\perp=h-h_\parallel e$. Then
\begin{equation}\label{eq:prox-radius-direction-derivatives}
 Dr[h]=h_\parallel,\qquad De[h]=r^{-1}h_\perp,
\end{equation}
and differentiating~\eqref{eq:prox-norm-jacobian} in that direction gives the explicit matrix
\begin{equation}\label{eq:prox-norm-second-derivative}
 D(D\Phi_g)(v)[h]
 =\frac{k_g}{a_g r^2}
   \left[h_\parallel(I-ee^\top)+h_\perp e^\top+e h_\perp^\top\right].
\end{equation}
Since $\|e\|=1$, $|h_\parallel|\le\|h\|$, and $\|h_\perp\|\le\|h\|$, it follows that
\begin{equation}\label{eq:prox-norm-uniform-second}
 \|D(D\Phi_g)(v)[h]\|_{\op}
 \le\frac{3k_g}{a_g r^2}\|h\|
 \le\frac{3}{a_gk_g}\|h\|.
\end{equation}
By the definition of the bilinear norm and~\eqref{eq:prox-norm-uniform-second},
\begin{equation*}
\|D^2\Phi_g(v)\|=\sup_{\substack{h\in\R^{d_g}\\ \|h\|=1}}\|D(D\Phi_g)(v)[h]\|_{\op}\le \frac{3}{a_gk_g}.
\end{equation*}
The uniform positive lower bounds on $a_g$ and $k_g$ therefore yield \eqref{eq:bound_on_d2_phi_g} with constant $C_\Phi$ independent of $g,n,p$. This bound remains uniform as $r\downarrow k_g$ and $\|\Phi_g(v)\|\downarrow0$ from the active exterior.
In dimension one, $I-ee^\top=0$ and $h_\perp=0$, so the second derivative is identically zero on each active branch.
\end{proof}

\subsection{Proof of Lemma~\ref{lem:resolvent}}

\begin{proof}
The argument first constructs the candidate operators and verifies inverses by multiplication; it does not assume that $J$ is invertible. Set
\begin{equation}\label{eq:resolvent-candidate}
 T=J^{1/2}HJ^{1/2},\qquad
 B_0=J^{1/2}(I+T)^{-1}J^{1/2}.
\end{equation}
The matrix $T$ is symmetric positive semidefinite, so $I+T\succ0$. Associativity, without exchanging the order of $H$ and $J$, gives
\begin{equation}\label{eq:resolvent-intertwining}
 (I+JH)J^{1/2}=J^{1/2}(I+T),\qquad
 J^{1/2}(I+HJ)=(I+T)J^{1/2}.
\end{equation}
Consequently
\begin{equation}\label{eq:resolvent-candidate-products}
 (I+JH)B_0=J,\qquad B_0(I+HJ)=J.
\end{equation}
In particular, $B_0+JHB_0=J$. Use this identity to check two right inverses explicitly:
\begin{equation}\label{eq:resolvent-inverse-products}
 \begin{aligned}
 (I+JH)(I-B_0H)
 &=I+JH-B_0H-JHB_0H\\
 &=I+(J-B_0-JHB_0)H=I,\\
 (I+HJ)(I-HB_0)
 &=I+HJ-HB_0-HJHB_0\\
 &=I+H(J-B_0-JHB_0)=I.
 \end{aligned}
\end{equation}
A square matrix with a right inverse is invertible. Thus
\begin{equation}\label{eq:resolvent-inverses-expanded}
 (I+JH)^{-1}=I-B_0H,\qquad
 (I+HJ)^{-1}=I-HB_0.
\end{equation}
Multiplying the two identities in~\eqref{eq:resolvent-candidate-products} by their corresponding inverses yields
\begin{equation}\label{eq:resolvent-identification}
 (I+JH)^{-1}J=B_0=J(I+HJ)^{-1}.
\end{equation}
This proves~\eqref{eq:B}, with $B=B_0$. Since $B_0$ is symmetric, transposing $I-HB_0$ gives $I-B_0H$ and proves both identities in~\eqref{eq:Q}.

Finally, the eigenvalues of $(I+T)^{-1}$ lie in $(0,1]$, so congruence by $J^{1/2}$ gives
\begin{equation}\label{eq:resolvent-order-and-norm}
 0\preceq B\preceq J\preceq\mu^{-1}I,
 \qquad
 \|Q\|_{\op}=\|I-HB\|_{\op}
 \le1+\|H\|_{\op}\|B\|_{\op}
 \le1+\frac{\|H\|_{\op}}\mu.
\end{equation}
All products remain valid on inactive blocks, where the corresponding block of $J$ is zero.
\end{proof}

\subsection{Proof of Lemma~\ref{lem:gaussian}}

\begin{proof}
\noindent\textbf{Step 1: deleted-data measurability and deterministic operator bounds.}
The fit $b_i$, its dual score, and its loss Hessian are functions of $\mathcal D_{-i}=\sigma((y_j,x_j):j\neq i)$. The map $J_g$ is continuous on the active exterior and is defined to be zero on the closed set $K_g$, so it is Borel measurable. Matrix inversion is continuous on the set of invertible matrices. Lemma~\ref{lem:resolvent} therefore makes $J_i,B_i,Q_i$ measurable with respect to $\mathcal D_{-i}$.

On $\mathcal A_i$, Lemma~\ref{lem:stability} and the norm and energy bounds give
\begin{equation}\label{eq:gaussian-fit-size}
 \|b_i\|\le\mu^{-1}\|X_i\|_{\op}\|\eta_{-i}\|
 \le\mu^{-1}K_X\sqrt{(n-1)2v_+}\le C\sqrt n.
\end{equation}
Writing the Hessian as a weighted Gram matrix,
\begin{equation}\label{eq:gaussian-hessian-bounds}
 H_i=X_i^\top\diag\{\ell''(y_j;x_j^\top b_i):j\ne i\}X_i,
 \qquad 0\preceq H_i\preceq L_2X_i^\top X_i,
\end{equation}
yields
\begin{equation}\label{eq:gaussian-operator-bounds}
 \|H_i\|_{\op}\le L_2K_X^2,\quad
 \|B_i\|_{\op}\le\mu^{-1},\quad
 \|Q_i\|_{\op}\le1+L_2K_X^2/\mu,\quad
 \|X_iB_i\|_{\op}\le K_X/\mu.
\end{equation}
These constants are uniform over all deleted datasets in $\mathcal A_i$.

\noindent\textbf{Step 2: conditional Gaussian variances and the coordinate union bound.}
Independence of the observation rows implies $x_i\mid\mathcal D_{-i}\sim N(0,\Sigma)$. For a deterministic coordinate vector $e_k$ in $\R^p$, and a retained-row coordinate vector $e_j$ in $\R^{n-1}$, conditional variances satisfy
\begin{equation}\label{eq:gaussian-coordinate-variances}
 \begin{aligned}
 \operatorname{Var}(e_k^\top Q_ix_i\mid\mathcal D_{-i})
 &=e_k^\top Q_i\Sigma Q_i^\top e_k
 \le\frac{C_\Sigma}{p}\|Q_i\|_{\op}^2\le\frac Cn,\\
 \operatorname{Var}(e_j^\top X_iB_ix_i\mid\mathcal D_{-i})
 &=e_j^\top X_iB_i\Sigma B_iX_i^\top e_j
 \le\frac{C_\Sigma}{p}\|X_iB_i\|_{\op}^2\le\frac Cn.
 \end{aligned}
\end{equation}
The final inequalities use $p\asymp n$ and~\eqref{eq:gaussian-operator-bounds}. A centered Gaussian variable with variance at most $C/n$ has tail at most $2\exp(-ns^2/(2C))$ at threshold $s$. There are at most $p+n-1\le C'n$ scalar coordinates in these two vectors. Thus, for a fixed constant $A>0$,
\begin{equation}\label{eq:gaussian-coordinate-union}
 \begin{aligned}
 &\Prob\left(\max\{\|Q_ix_i\|_\infty,\|X_iB_ix_i\|_\infty\}>Ah_n
                      \,\middle|\,\mathcal D_{-i}\right)\\
 &\hspace{2em}\le 2(p+n-1)\exp\left(-\frac{A^2}{2C}\log(en)\right)
 \le C'n(en)^{-A^2/(2C)}
 \quad\text{on }\mathcal A_i.
 \end{aligned}
\end{equation}
Every block contains at most $d_0$ scalar coordinates, so
\begin{equation}\label{eq:gaussian-block-conversion}
 \|Q_ix_i\|_{2,\infty}+\|X_iB_ix_i\|_\infty
 \le(\sqrt{d_0}+1)
      \max\{\|Q_ix_i\|_\infty,\|X_iB_ix_i\|_\infty\}.
\end{equation}
Choose $A$ with $A^2/(2C)\ge22$ and then $C_G\ge(\sqrt{d_0}+1)A$. Equations~\eqref{eq:gaussian-coordinate-union}--\eqref{eq:gaussian-block-conversion} give conditional failure probability at most $Cn^{-20}$. No conditioning on the response $y_i$, the held-out score, or a selected boundary set has occurred.

\noindent\textbf{Step 3: the held-out row norm.}
Write $x_i=\Sigma^{1/2}z$, where $z\sim N(0,I_p)$. Then
\begin{equation}\label{eq:gaussian-row-domination}
 \|x_i\|^2\le\frac{C_\Sigma}{p}\|z\|^2,
 \qquad \E\exp(\|z\|^2/4)=2^{p/2}.
\end{equation}
Exponential Markov inequality gives, for any $r>0$,
\begin{equation}\label{eq:gaussian-row-tail}
 \Prob(\|x_i\|>r)
 \le\exp\left[-\frac{pr^2}{4C_\Sigma}+\frac p2\log2\right].
\end{equation}
Choose $K_x$ sufficiently large so that the exponent at $r=K_x$ is at most $-cp$, which is at most $-c'n$. This proves the asserted row-norm bound. We will also use the following consequences: for every fixed $m>0$,
\begin{equation}\label{eq:gaussian-row-moments}
 \E\|x_i\|^m\le C_m,
 \qquad
 \E[\|x_i\|^m\ind_{\{\|x_i\|>K_x\}}]\le C_me^{-c_m n}.
\end{equation}
For the second bound, integrate~\eqref{eq:gaussian-row-tail} using
\begin{equation}\label{eq:gaussian-restricted-tail-integral}
 \E[R^m\ind_{\{R>K_x\}}]
 =K_x^m\Prob(R>K_x)+\int_{K_x}^\infty mr^{m-1}\Prob(R>r)\,dr,
 \qquad R=\|x_i\|.
\end{equation}
For $r\ge K_x$, the exponent is bounded by $-cpr^2$ after increasing $K_x$, so the integral is exponentially small. Integrating from a fixed threshold, and bounding the part below that threshold deterministically, proves the first bound in~\eqref{eq:gaussian-row-moments}.
\end{proof}

\subsection{Proof of Lemma~\ref{lem:density}}

\begin{proof}
Let $m\ge1$ be the number of summands, and recall that
\begin{equation*}
\theta_j\in\mathbb R,\qquad \sum_{j=1}^m\theta_j=1,\qquad
\Gamma=\sum_{j=1}^m\gamma_j\theta_j^2.
\end{equation*}
All integrals below are Lebesgue integrals. Their nonnegative integrands allow the use of Tonelli's theorem before finiteness has been established.

\noindent\textbf{Step 1: apply the one-sided curvature inequality before convolution.}
The hypothesis is equivalent to $D^2\log f_j\succeq-\gamma_j I$. For any $z_j,h\in\R^d$, the integral Taylor formula gives
\begin{equation}\label{eq:density-log-taylor}
 \begin{aligned}
 \log f_j(z_j\pm\theta_jh)
 &=\log f_j(z_j)\pm\theta_j\nabla\log f_j(z_j)^\top h\\
 &\quad+\theta_j^2\int_0^1(1-s)
       h^\top D^2\log f_j(z_j\pm s\theta_jh)h\,ds\\
 &\ge\log f_j(z_j)\pm\theta_j\nabla\log f_j(z_j)^\top h
            -\frac{\gamma_j\theta_j^2}{2}\|h\|^2.
 \end{aligned}
\end{equation}
For $z=(z_1,\ldots,z_m)$, define
\begin{equation*}
\mathfrak p(z)=\prod_{j=1}^m f_j(z_j),\qquad
\mathfrak a(z,h)=\sum_{j=1}^m\theta_j\nabla\log f_j(z_j)^\top h.
\end{equation*}
Summing~\eqref{eq:density-log-taylor}, exponentiating, and averaging over the two signs gives
\begin{equation}\label{eq:density-product-cosh}
 \begin{aligned}
 \frac{\prod_jf_j(z_j+\theta_jh)+\prod_jf_j(z_j-\theta_jh)}2
 &\ge \mathfrak p(z)e^{-\Gamma\|h\|^2/2}\frac{e^{\mathfrak a(z,h)}+e^{-\mathfrak a(z,h)}}2\\
 &=\mathfrak p(z)e^{-\Gamma\|h\|^2/2}\cosh(\mathfrak a(z,h))\\
 &\ge \mathfrak p(z)e^{-\Gamma\|h\|^2/2}.
 \end{aligned}
\end{equation}
There is no upper-curvature condition in this calculation.

\noindent\textbf{Step 2: write the convolution fiber in coordinates.}
For $m\ge2$, define a version of the convolution density by
\begin{equation}\label{eq:density-convolution-coordinates}
 f(t)=\int_{\R^{d(m-1)}}
     \left(\prod_{j=1}^{m-1}f_j(z_j)\right)
     f_m\left(t-\sum_{j=1}^{m-1}z_j\right)
     \,dz_1\cdots dz_{m-1}.
\end{equation}
The linear change of variables from $(z_1,\ldots,z_m)$ to $(z_1,\ldots,z_{m-1},t=\sum_jz_j)$ has determinant one. Independence and Tonelli therefore give
\begin{equation}\label{eq:density-convolution-normalization}
 \int_{\R^d}f(t)\,dt
 =\prod_{j=1}^m\int_{\R^d}f_j(z_j)\,dz_j=1.
\end{equation}
For $m=1$, use $f=f_1$, with no fiber integral.

In~\eqref{eq:density-product-cosh}, put $z_m=t-\sum_{j<m}z_j$ and integrate over the first $m-1$ variables. For the positive translation, set $z'_j=z_j+\theta_jh$ for $j<m$. This translation has Jacobian one, and
\begin{equation}\label{eq:density-fiber-translation}
 z_m+\theta_mh
 =t-\sum_{j<m}z_j+\theta_mh
 =t+h-\sum_{j<m}z'_j,
\end{equation}
because $\sum_j\theta_j=1$. Thus that translated product integrates to $f(t+h)$. The negative translation integrates to $f(t-h)$. Consequently, for almost every $t,h$,
\begin{equation}\label{eq:density-convolution-midpoint}
 \frac{f(t+h)+f(t-h)}2\ge f(t)e^{-\Gamma\|h\|^2/2}.
\end{equation}
This argument justifies the fiber measure factor explicitly; it does not differentiate the convolution or assume log-concavity of its factors.

\noindent\textbf{Step 3: integrate the displacement variable.}
Since all $\gamma_j>0$ and at least one $\theta_j\ne0$, $\Gamma>0$. For almost every $t$, Tonelli and~\eqref{eq:density-convolution-normalization} give
\begin{equation}\label{eq:density-final-integration}
 \begin{aligned}
 1&=\frac12\int_{\R^d}f(t+h)\,dh
        +\frac12\int_{\R^d}f(t-h)\,dh\\
 &\ge f(t)\int_{\R^d}e^{-\Gamma\|h\|^2/2}\,dh
 =f(t)\left(\frac{2\pi}{\Gamma}\right)^{d/2}.
 \end{aligned}
\end{equation}
The finite left side also rules out an infinite fiber value for almost every $t$. Rearranging yields
\begin{equation*}
\|f\|_\infty
=\operatorname*{ess\,sup}_{t\in\mathbb R^d}f(t)
\le\left(\frac{\Gamma}{2\pi}\right)^{d/2},
\end{equation*}
as claimed.
\end{proof}

\subsection{Proof of Lemma~\ref{lem:tube}}

\begin{proof}
Define
\begin{equation*}
a=\varepsilon/r_0\in(0,1),\qquad
\mathcal T_\varepsilon(K)=\{z\in\mathbb R^d:\dist(z,\partial K)\le\varepsilon\}.
\end{equation*}
Here $\dist(z,\partial K)$ denotes the Euclidean distance from $z$ to $\partial K$ in $\mathbb R^d$. Since $B(0,r_0)\subseteq K$, scaling gives $B(0,\varepsilon)\subseteq aK$. Convexity also gives the Minkowski-sum identities
\begin{equation}\label{eq:tube-minkowski-dilations}
 K+aK=(1+a)K,
 \qquad (1-a)K+aK=K.
\end{equation}
For example, $x+ay=(1+a)[x/(1+a)+ay/(1+a)]\in(1+a)K$ for $x,y\in K$, and the reverse inclusion follows by setting $x=y$.

\noindent\textbf{Outer inclusion.}
Fix $z\in\mathcal T_\varepsilon(K)$. Compactness of $\partial K$ allows us to choose
\begin{equation*}
u_z\in\operatorname*{arg\,min}_{u\in\partial K}\|z-u\|,
\qquad \|z-u_z\|=\dist(z,\partial K)\le\varepsilon.
\end{equation*}
Thus $u_z\in K$ and $z-u_z\in B(0,\varepsilon)$, which imply
\begin{equation}\label{eq:tube-outer-inclusion}
 z\in K+B(0,\varepsilon)\subseteq K+aK=(1+a)K.
\end{equation}

\noindent\textbf{Exclusion of the interior of the inner dilation.}
Let $z\in\operatorname{int}((1-a)K)$. There exists $\delta>0$ such that $z+B(0,\delta)\subseteq(1-a)K$. Therefore
\begin{equation}\label{eq:tube-inner-buffer}
 \begin{aligned}
 z+B(0,\varepsilon+\delta)
 &=z+B(0,\delta)+B(0,\varepsilon)\\
 &\subseteq(1-a)K+aK=K.
 \end{aligned}
\end{equation}
For any $w$ with $\|w-z\|<\varepsilon+\delta$, set
\begin{equation*}
\rho_w=\frac{\varepsilon+\delta-\|w-z\|}{2}>0.
\end{equation*}
The triangle inequality and~\eqref{eq:tube-inner-buffer} give
\begin{equation*}
w+B(0,\rho_w)\subseteq z+B(0,\varepsilon+\delta)\subseteq K.
\end{equation*}
Hence $w\in\operatorname{int}K$, so $\dist(z,\partial K)\ge\varepsilon+\delta>\varepsilon$. Combining this exclusion with~\eqref{eq:tube-outer-inclusion} yields
\begin{equation*}
\mathcal T_\varepsilon(K)
\subseteq(1+a)K\setminus\operatorname{int}((1-a)K).
\end{equation*}

\noindent\textbf{Volume comparison.}
We first justify that $K$ and its interior have the same volume. For any $\tau\in(0,1)$,
\begin{equation}\label{eq:tube-boundary-null}
 (1-\tau)K+\tau B(0,r_0)\subseteq K,
 \qquad (1-\tau)K\subseteq\operatorname{int}K.
\end{equation}
The second inclusion follows because the first gives a positive-radius ball around each point of $(1-\tau)K$. Hence
\begin{equation}\label{eq:tube-interior-volume}
 (1-\tau)^d\vol(K)\le\vol(\operatorname{int}K)\le\vol(K),
\end{equation}
where $\operatorname{vol}(K)$ denotes the $d$-dimensional Lebesgue measure. Letting $\tau\downarrow0$ proves equality. The same fact holds for the positive dilation $(1-a)K$.

The inner dilation is contained in the outer dilation. By scaling of Lebesgue measure and the set inclusion already proved,
\begin{equation}\label{eq:tube-volume-detail}
 \begin{aligned}
 \vol\mathcal T_\varepsilon(K)
 &\le\bigl[(1+a)^d-(1-a)^d\bigr]\vol(K)\\
 &=d\left(\int_{1-a}^{1+a}s^{d-1}\,ds\right)\vol(K)\\
 &\le 2ad\,2^{d-1}\vol(B(0,R_0))\\
 &=d2^d\frac{\varepsilon}{r_0}\vol(B(0,R_0)).
 \end{aligned}
\end{equation}
For bounded $d,r_0^{-1},R_0$, the last coefficient is uniformly bounded, as required.
\end{proof}

\subsection{Proof of Lemma~\ref{lem:baddeleted}}

\begin{proof}
Let $v=\E\eta_1^2\in[v_-,v_+]$. We first control the empirical score energy, then the Gaussian design norm, and finally combine them without assuming independence of scores and design.

\noindent\textbf{Step 1: a centered-sum inequality for moments between one and two.}
Let $Z_1,\ldots,Z_{n-1}$ be independent real variables satisfying
\begin{equation*}
\E Z_j=0,\qquad \E|Z_j|^q<\infty,\qquad 1\le q\le2.
\end{equation*}
Let $(Z'_1,\ldots,Z'_{n-1})$ be an independent copy of $(Z_1,\ldots,Z_{n-1})$. Conditional Jensen gives
\begin{equation}\label{eq:bad-symmetrization}
 \E\left|\sum_{j=1}^{n-1}Z_j\right|^q
 =\E\left|\E\left[\sum_{j=1}^{n-1}(Z_j-Z'_j)\,\middle|\,Z_1,\ldots,Z_{n-1}\right]\right|^q
 \le\E\left|\sum_{j=1}^{n-1}(Z_j-Z'_j)\right|^q.
\end{equation}
The differences are independent and symmetric. Introduce independent random signs, independent of both copies, with
\begin{equation*}
\Prob(\epsilon_j=1)=\Prob(\epsilon_j=-1)=\tfrac12,\qquad j=1,\ldots,n-1.
\end{equation*}
Conditional on the differences, the concavity of $s\mapsto s^{q/2}$ and cancellation of cross terms yield
\begin{equation}\label{eq:bad-rademacher-moment}
 \E_\epsilon\left|\sum_{j=1}^{n-1}\epsilon_j(Z_j-Z'_j)\right|^q
 \le\left(\E_\epsilon\left|\sum_{j=1}^{n-1}\epsilon_j(Z_j-Z'_j)\right|^2\right)^{q/2}=\left(\sum_{j=1}^{n-1}|Z_j-Z'_j|^2\right)^{q/2}
 \le\sum_{j=1}^{n-1}|Z_j-Z'_j|^q.
\end{equation}
The last inequality uses $q/2\le1$. Symmetry means that multiplying by the independent signs leaves the joint law unchanged. Taking expectations and using $|a-b|^q\le2^{q-1}(|a|^q+|b|^q)$ proves
\begin{equation}\label{eq:bad-centered-sum-inequality}
 \E\left|\sum_{j=1}^{n-1}Z_j\right|^q\le2^q\sum_{j=1}^{n-1}\E|Z_j|^q.
\end{equation}

\noindent\textbf{Step 2: apply the inequality to squared scores.}
Take $q=3/2$ and $Z_j=\eta_j^2-v$ over $j\ne i$. Their $3/2$ moments are bounded because
\begin{equation}\label{eq:bad-centered-energy-moment}
 \E|\eta_j^2-v|^{3/2}
 \le2^{1/2}\{\E|\eta_j|^3+v^{3/2}\}\le C.
\end{equation}
After dividing~\eqref{eq:bad-centered-sum-inequality} by $(n-1)^{3/2}$, and using Jensen for the nonnegative average $V_i$, we obtain
\begin{equation}\label{eq:energymoment}
 \begin{aligned}
 \E|V_i-v|^{3/2}
 &\le (n-1)^{-3/2}C\sum_{j\ne i}\E|\eta_j^2-v|^{3/2}
 \le C(n-1)^{-1/2}\le Cn^{-1/2},\\
 \E V_i^{3/2}
 &=\E\left(\frac1{n-1}\sum_{j\ne i}\eta_j^2\right)^{3/2}
 \le\frac1{n-1}\sum_{j\ne i}\E|\eta_j|^3\le K_3.
 \end{aligned}
\end{equation}
Set $B_i^{\mathrm{eng}}=\{V_i\notin[v_-/2,2v_+]\}$. On the lower part $V_i<v_-/2$, we have $|V_i-v|\ge v_-/2$ and $1+V_i\le1+v_-/2$. Thus
\begin{equation}\label{eq:bad-lower-energy}
 (1+V_i)\ind_{\{V_i<v_-/2\}}\le C|V_i-v|^{3/2}.
\end{equation}
On the upper part $V_i>2v_+$, put $z=V_i-v$. Then $z\ge v_+$ and $V_i=v+z\le v_++z$. Since $z\ge v_+>0$, both $1$ and $z$ are bounded by fixed multiples of $z^{3/2}$. Therefore
\begin{equation}\label{eq:bad-upper-energy}
 (1+V_i)\ind_{\{V_i>2v_+\}}
 \le C|V_i-v|^{3/2}.
\end{equation}
Combining the two tails with~\eqref{eq:energymoment} gives the weighted estimate
\begin{equation}\label{eq:energybad}
 \E[(1+V_i)\ind_{B_i^{\mathrm{eng}}}]\le Cn^{-1/2}.
\end{equation}
The unweighted probability bound follows because $1+V_i\ge1$.

\noindent\textbf{Step 3: a Gaussian spectral tail and its restricted moments.}
Recall that $X_i$ is the design matrix with observation $i$ removed, and therefore it is an $(n-1)\times p$ matrix. Write $X_i=\mathsf{Z}\Sigma^{1/2}$, where $\mathsf{Z}$ is an $(n-1)\times p$ matrix of independent standard normal variables. Choose $1/4$-nets $\mathcal N_{n-1}$ and $\mathcal N_p$ of the two Euclidean unit spheres with cardinalities at most $9^{n-1}$ and $9^p$. To see the cardinality bound, take a maximal $1/4$-separated set on the unit sphere in $\R^d$. Maximality makes it a $1/4$-net. The balls of radius $1/8$ around its points have disjoint interiors and lie in $B(0,9/8)$, so volume comparison gives
\begin{equation}\label{eq:bad-net-packing}
 |\mathcal N_d|\,(1/8)^d\vol(B(0,1))
 \le(9/8)^d\vol(B(0,1)),\qquad |\mathcal N_d|\le9^d.
\end{equation}
Approximating both unit vectors in the variational formula for the operator norm gives
\begin{equation}\label{eq:bad-net-operator-bound}
 \|\mathsf{Z}\|_{\op}\le2\max_{u\in\mathcal N_{n-1},\,w\in\mathcal N_p}|u^\top \mathsf{Z}w|.
\end{equation}
Indeed, for unit vectors $u,w$, choose $u_0\in\mathcal N_{n-1}$ and $w_0\in\mathcal N_p$ with $\|u-u_0\|\le1/4$ and $\|w-w_0\|\le1/4$. Then
\begin{equation*}
|u^\top \mathsf{Z}w-u_0^\top \mathsf{Z}w_0|\le |(u-u_0)^\top \mathsf{Z}w|+|u_0^\top \mathsf{Z}(w-w_0)|\le\tfrac12\|\mathsf{Z}\|_{\op}.
\end{equation*}
Taking the supremum over $u,w$ and moving the error term to the left proves~\eqref{eq:bad-net-operator-bound}. Each $u_0^\top \mathsf{Z}w_0$ is $N(0,1)$, so a union bound implies
\begin{equation}\label{eq:bad-net-gaussian-tail}
 \Prob(\|\mathsf{Z}\|_{\op}>s)\le2\exp\{(n-1+p)\log9-s^2/8\}.
\end{equation}
Since $\|X_i\|_{\op}\le\sqrt{C_\Sigma/p}\|\mathsf{Z}\|_{\op}$,
\begin{equation}\label{eq:bad-design-spectral-tail}
 \Prob(\|X_i\|_{\op}>r)
 \le2\exp\left\{(n-1+p)\log9-\frac{pr^2}{8C_\Sigma}\right\}
 \le2e^{-cpr^2},\qquad r\ge K_X,
\end{equation}
after choosing $K_X$ large enough, using the bounded ratio $(n-1)/p$. For $T=\|X_i\|_{\op}$,
\begin{equation}\label{eq:bad-spectral-moment-integral}
 \E[T^6\ind_{\{T>K_X\}}]
 =K_X^6\Prob(T>K_X)+\int_{K_X}^\infty6r^5\Prob(T>r)\,dr
 \le Ce^{-cn}.
\end{equation}
This proves
\begin{equation}\label{eq:spectralbad}
 \Prob(\|X_i\|_{\op}>K_X)
 +\E[\|X_i\|_{\op}^6\ind_{\{\|X_i\|_{\op}>K_X\}}]
 \le Ce^{-cn}.
\end{equation}
The same tail integration also gives uniformly bounded spectral moments of every fixed order, a fact used in the light-tail refinement.

\noindent\textbf{Step 4: combine the energy and spectral failures.}
Let $B_i^{\mathrm{sp}}=\{\|X_i\|_{\op}>K_X\}$. Then $\mathcal A_i^c=B_i^{\mathrm{eng}}\cup B_i^{\mathrm{sp}}$. Equations~\eqref{eq:energybad} and~\eqref{eq:spectralbad} immediately imply~\eqref{eq:badprob}.

For the weighted claim, Lemma~\ref{lem:stability} and the elementary norm bound give
\begin{equation}\label{eq:bad-fit-energy-bound}
 \frac{\|b_i\|^2}{p}
 \le\frac{\|X_i\|_{\op}^2}{\mu^2p}\sum_{j\ne i}\eta_j^2
 =\frac{n-1}{\mu^2p}\|X_i\|_{\op}^2V_i
 \le C\|X_i\|_{\op}^2V_i.
\end{equation}
On the good spectral event, use~\eqref{eq:energybad}:
\begin{equation}\label{eq:bad-weighted-energy-part}
 \E\left[\left(1+\frac{\|b_i\|^2}{p}\right)
            \ind_{B_i^{\mathrm{eng}}\cap(B_i^{\mathrm{sp}})^c}\right]
 \le C\E[(1+V_i)\ind_{B_i^{\mathrm{eng}}}]
 \le Cn^{-1/2}.
\end{equation}
On the spectral failure, use H\"older with exponents $3/2$ and $3$:
\begin{equation}\label{eq:bad-holder-spectral-part}
 \E[\|X_i\|_{\op}^2V_i\ind_{B_i^{\mathrm{sp}}}]
 \le(\E V_i^{3/2})^{2/3}
       \left(\E[\|X_i\|_{\op}^6\ind_{B_i^{\mathrm{sp}}}]\right)^{1/3}\le Ce^{-cn}.
\end{equation}
Using the disjoint decomposition
\begin{equation*}
\mathcal A_i^c
=\bigl(B_i^{\mathrm{eng}}\cap(B_i^{\mathrm{sp}})^c\bigr)\mathbin{\dot\cup}B_i^{\mathrm{sp}},
\end{equation*}
we combine~\eqref{eq:bad-weighted-energy-part}, \eqref{eq:bad-holder-spectral-part}, and~\eqref{eq:spectralbad} to obtain
\begin{equation*}
\E\left[\left(1+\frac{\|b_i\|^2}{p}\right)\ind_{\mathcal A_i^c}\right]
\le Cn^{-1/2}+Ce^{-cn}\le Cn^{-1/2}.
\end{equation*}
This proves~\eqref{eq:weightedbad}. H\"older's inequality does not require independence between the score energy and the design norm.
\end{proof}
\section{Proofs for Section~\ref{sec:proof-of-thm2.5}}

\subsection{Proof of Lemma~\ref{lem:stability}}

\begin{proof}
\noindent\textbf{Step 1: coercivity and uniqueness on the stated domains.}
Fix deterministic sets $T\subseteq\{1,\ldots,n\}$ and $\mathcal I\subseteq\{1,\ldots,G\}$. The rows of $Z$ are ordered by increasing $j\in T$, and its column blocks by increasing $g\in\mathcal I$. 

For $u\in\mathbb R^{d_{\mathcal I}}$, define $\beta\in\mathbb R^p$ by
\begin{equation*}
\beta_g=
\begin{cases}
u_g,&g\in\mathcal I,\\
0,&g\notin\mathcal I.
\end{cases}
\end{equation*}
Since $r_g(0)=0$ for every $g$, we have
\begin{equation*}
x_j^\top\beta=z_j^\top u\quad(j\in T),
\qquad
r(\beta)=r_{\mathcal I}(u),
\qquad
\|\beta\|=\|u\|.
\end{equation*}
Write the objective in these free blocks as
\begin{equation*}
F_{T,\mathcal I}(u)
=
\sum_{j\in T}\ell(y_j;z_j^\top u)
+\lambda_1r_{\mathcal I}(u)
+\frac{\mu}{2}\|u\|^2,
\qquad
r_{\mathcal I}(u)=\sum_{g\in\mathcal I}r_g(u_g).
\end{equation*}
Convexity of $\ell(y_j;\cdot)$ gives, for every $j\in T$,
\begin{equation}\label{eq:stable-tangent-zero}
\ell(y_j;z_j^\top u)
\ge
\ell(y_j;0)+\eta_jz_j^\top u.
\end{equation}
Since $r_{\mathcal I}(u)\ge r_{\mathcal I}(0)=0$, it follows that
\begin{equation}\label{eq:stable-coercive}
\begin{aligned}
F_{T,\mathcal I}(u)
&\ge
F_{T,\mathcal I}(0)
+(Z^\top\eta)^\top u
+\frac{\mu}{2}\|u\|^2\\
&\ge
F_{T,\mathcal I}(0)
-\|Z\|_{\op}\|\eta\|\|u\|
+\frac{\mu}{2}\|u\|^2
\longrightarrow+\infty
\quad(\|u\|\longrightarrow\infty).
\end{aligned}
\end{equation}

For each fixed dataset, let $\mathcal D\subseteq\mathbb R^{d_{\mathcal I}}$ be nonempty, closed, and convex. The function $F_{T,\mathcal I}$ is finite and continuous, and~\eqref{eq:stable-coercive} implies that $\inf_{u\in\mathcal D}F_{T,\mathcal I}(u)>-\infty$. Choose a sequence $(u_m)_{m\ge1}\subseteq\mathcal D$ such that
\begin{equation*}
F_{T,\mathcal I}(u_m)
\longrightarrow
\inf_{u\in\mathcal D}F_{T,\mathcal I}(u).
\end{equation*}
By~\eqref{eq:stable-coercive}, $(u_m)_{m\ge1}$ is bounded. Hence some subsequence satisfies $u_{m_\nu}\to a_{\mathcal D}\in\mathcal D$, and continuity gives
\begin{equation*}
F_{T,\mathcal I}(a_{\mathcal D})
=
\inf_{u\in\mathcal D}F_{T,\mathcal I}(u).
\end{equation*}

The objective $F_{T,\mathcal I}$ is $\mu$-strongly convex. For $u,v\in\mathcal D$ and $0<t<1$,
\begin{equation}\label{eq:stable-strict-convexity}
F_{T,\mathcal I}((1-t)u+tv)
\le
(1-t)F_{T,\mathcal I}(u)
+tF_{T,\mathcal I}(v)
-\frac{\mu}{2}t(1-t)\|u-v\|^2.
\end{equation}
Since $(1-t)u+tv\in\mathcal D$, two distinct minimizers would contradict~\eqref{eq:stable-strict-convexity}. Thus $a_{\mathcal D}$ is unique. For every $u\in\mathcal D$ and $0<t<1$,
\begin{equation*}
F_{T,\mathcal I}(a_{\mathcal D})
\le
F_{T,\mathcal I}((1-t)a_{\mathcal D}+tu).
\end{equation*}
Combining this inequality with~\eqref{eq:stable-strict-convexity}, dividing by $t$, and letting $t\downarrow0$ gives
\begin{equation}\label{eq:stable-constrained-growth}
F_{T,\mathcal I}(u)-F_{T,\mathcal I}(a_{\mathcal D})
\ge
\frac{\mu}{2}\|u-a_{\mathcal D}\|^2.
\end{equation}

For $v\in\mathbb R^p$, the domains defined earlier have the descriptions
\begin{equation*}
\begin{aligned}
A(v)
&=
\{\beta\in\mathbb R^p:
\beta_g=0\text{ for every }g\notin S(v)\},\\
C(v)
&=
\{\beta\in A(v):
v_g\beta_g\ge0
\text{ for every }g\in S(v)\text{ with }d_g=1\}.
\end{aligned}
\end{equation*}
Both sets are nonempty, closed, and convex. Taking $\mathcal I=\{1,\ldots,G\}$, the preceding argument therefore applies to
\begin{equation*}
\mathcal D=\mathbb R^p,
\qquad
\mathcal D=\{\beta\in\mathbb R^p:\beta_g=0\},
\qquad
\mathcal D=A(b),
\qquad
\mathcal D=C(b).
\end{equation*}
Moreover,
\begin{equation*}
F_{\{1,\ldots,n\},\{1,\ldots,G\}}=F,
\qquad
F_{\{1,\ldots,n\}\setminus\{i\},\{1,\ldots,G\}}=F_i.
\end{equation*}
This proves existence and uniqueness for $b$, $b_i$, $m_i^A$, $m_i^C$, and the minimizers of $F$ and $F_i$ over $\{\beta\in\mathbb R^p:\beta_g=0\}$. Taking $\mathcal D=\mathbb R^{d_{\mathcal I}}$ also proves existence and uniqueness of the estimator $a$ in the statement.

\noindent\textbf{Step 2: measurability, including selected domains.}
Fix deterministic $T$ and $\mathcal I$ as above, and let $\mathcal D\subseteq\mathbb R^{d_{\mathcal I}}$ be a deterministic nonempty closed convex set. Define
\begin{equation*}
\mathcal F_{T,\mathcal I}
=
\sigma((y_j,x_{j,\mathcal I}):j\in T).
\end{equation*}
Choose a deterministic countable dense subset $\{u_r:r\ge1\}$ of $\mathcal D$. For every fixed $u$, the random variable $F_{T,\mathcal I}(u)$ is $\mathcal F_{T,\mathcal I}$-measurable. Continuity in $u$ therefore implies that
\begin{equation}\label{eq:stable-measurable-infimum}
I_{\mathcal D}
=
\inf_{r\ge1}F_{T,\mathcal I}(u_r)
=
\inf_{u\in\mathcal D}F_{T,\mathcal I}(u)
\end{equation}
is a finite $\mathcal F_{T,\mathcal I}$-measurable random variable. For each integer $m\ge1$, define
\begin{equation}\label{eq:stable-measurable-selection}
k_m
=
\min\{r\ge1:
F_{T,\mathcal I}(u_r)\le I_{\mathcal D}+m^{-1}\}.
\end{equation}
The set in~\eqref{eq:stable-measurable-selection} is nonempty by the definition of $I_{\mathcal D}$. Moreover,
\begin{equation}\label{eq:stable-selection-event}
\{k_m=r\}
=
\{F_{T,\mathcal I}(u_r)\le I_{\mathcal D}+m^{-1}\}
\cap
\bigcap_{1\le s<r}
\{F_{T,\mathcal I}(u_s)>I_{\mathcal D}+m^{-1}\}.
\end{equation}
Thus $u_{k_m}$ is $\mathcal F_{T,\mathcal I}$-measurable. For
\begin{equation*}
a_{\mathcal D}
=
\arg\min_{u\in\mathcal D}F_{T,\mathcal I}(u),
\end{equation*}
equation~\eqref{eq:stable-constrained-growth} gives
\begin{equation}\label{eq:stable-selection-convergence}
\frac{\mu}{2}\|u_{k_m}-a_{\mathcal D}\|^2
\le
F_{T,\mathcal I}(u_{k_m})-I_{\mathcal D}
\le
m^{-1}.
\end{equation}
Hence $u_{k_m}\to a_{\mathcal D}$ pointwise, so $a_{\mathcal D}$ is $\mathcal F_{T,\mathcal I}$-measurable. In particular, taking $\mathcal D=\mathbb R^{d_{\mathcal I}}$ proves the stated measurability of $a$.

Taking $T=\{1,\ldots,n\}$, $\mathcal I=\{1,\ldots,G\}$, and $\mathcal D=\mathbb R^p$ proves that $b$ is measurable with respect to $\sigma((y_j,x_j):1\le j\le n)$. Define the deterministic finite family
\begin{equation*}
\mathfrak D
=
\{A(v):v\in\mathbb R^p\}
\cup
\{C(v):v\in\mathbb R^p\},
\qquad
|\mathfrak D|\le4^G.
\end{equation*}
Indeed, each block factor is one of $\{0\}$ and $\mathbb R^{d_g}$, with the additional possibilities $[0,\infty)$ and $(-\infty,0]$ when $d_g=1$. For every $\mathcal D\in\mathfrak D$ and $D\in\{A,C\}$, the event $\{D(b)=\mathcal D\}$ is a finite combination of events of the form
\begin{equation*}
\{b_g=0\},
\qquad
\{b_g\ne0\},
\qquad
\{b_g>0\},
\qquad
\{b_g<0\},
\end{equation*}
where the last two forms are used only when $d_g=1$. These events are measurable. For each $i$ and $\mathcal D\in\mathfrak D$, let
\begin{equation*}
a_{i,\mathcal D}
=
\arg\min_{\beta\in\mathcal D}F_i(\beta).
\end{equation*}
The preceding construction shows that $a_{i,\mathcal D}$ is measurable with respect to $\sigma((y_j,x_j):j\ne i)$. Consequently, for $D\in\{A,C\}$,
\begin{equation*}
m_i^D
=
\sum_{\mathcal D\in\mathfrak D}
\mathbf 1_{\{D(b)=\mathcal D\}}a_{i,\mathcal D}
\end{equation*}
is measurable with respect to $\sigma((y_j,x_j):1\le j\le n)$.

For a deterministic block $g$, take $\mathcal I=\{1,\ldots,G\}\setminus\{g\}$. Then
\begin{equation*}
\mathcal F_{T,\mathcal I}
=
\sigma((y_j,x_{j,-g}):j\in T).
\end{equation*}
Extending the estimator $a\in\mathbb R^{d_{\mathcal I}}$ by setting its $g$th block to zero gives the unique minimizer
\begin{equation*}
\arg\min_{\beta\in\mathbb R^p:\,\beta_g=0}
F_{T,\{1,\ldots,G\}}(\beta).
\end{equation*}
This extension is deterministic and linear, so the minimizer is measurable with respect to $\sigma((y_j,x_{j,-g}):j\in T)$. In particular,
\begin{equation*}
\breve b_i^{(-g)}
=
\arg\min_{\beta\in\mathbb R^p:\,\beta_g=0}F_i(\beta)
\end{equation*}
is measurable with respect to
\begin{equation*}
\mathcal F_{i,-g}
=
\sigma((y_j,x_{j,-g}):j\ne i).
\end{equation*}
This conclusion uses the deterministic constraint $\beta_g=0$; the measurability established above for $m_i^A$ and $m_i^C$ is with respect to $\sigma((y_j,x_j):1\le j\le n)$.

\noindent\textbf{Step 3: the coefficient and score bounds.}
For deterministic $T$ and $\mathcal I$, let $a$ be the estimator in the statement, so that
\begin{equation*}
a
=
\arg\min_{u\in\mathbb R^{d_{\mathcal I}}}
F_{T,\mathcal I}(u).
\end{equation*}
Since $r_{\mathcal I}$ is minimized at zero,
\begin{equation*}
0\in\partial r_{\mathcal I}(0),
\qquad
Z^\top\eta\in\partial F_{T,\mathcal I}(0),
\qquad
0\in\partial F_{T,\mathcal I}(a).
\end{equation*}
Strong monotonicity of $\partial F_{T,\mathcal I}$ gives
\begin{equation}\label{eq:stable-origin-monotonicity}
\mu\|a\|^2
\le
\langle-Z^\top\eta,a\rangle
\le
\|Z\|_{\op}\|\eta\|\|a\|.
\end{equation}
If $a=0$, the norm bound is immediate. Otherwise, dividing by $\|a\|$ gives~\eqref{eq:normbound}.

Recall that
\begin{equation*}
\dot\ell(a)
=
(\ell'(y_j;z_j^\top a))_{j\in T}
\in\mathbb R^{|T|}.
\end{equation*}
The condition $0\in\partial F_{T,\mathcal I}(a)$ supplies $\zeta\in\partial r_{\mathcal I}(a)$ such that
\begin{equation}\label{eq:stable-fit-kkt}
Z^\top\dot\ell(a)+\lambda_1\zeta+\mu a=0,
\qquad
\langle\zeta,a\rangle
\ge
r_{\mathcal I}(a)-r_{\mathcal I}(0)
\ge0.
\end{equation}
Taking the inner product of~\eqref{eq:stable-fit-kkt}
with $a$ and using $\langle\zeta,a\rangle\ge0$ gives
\begin{equation}
\langle Z^\top\dot\ell(a),a\rangle+\mu\|a\|^2
\le
\langle Z^\top\dot\ell(a),a\rangle
+\lambda_1\langle\zeta,a\rangle+\mu\|a\|^2
=0.
\end{equation}

Moreover, the Lipschitz property of the score gives
\begin{equation*}
|\ell'(y_j;z_j^\top a)-\ell'(y_j;0)|
\le L_2|z_j^\top a|,
\qquad j\in T.
\end{equation*}

Combining these estimates yields
\begin{equation}\label{eq:stable-score-two-inequalities}
\mu\|a\|
\le
\|Z\|_{\op}\|\dot\ell(a)\|,
\qquad
\|\dot\ell(a)-\eta\|
\le
L_2\|Za\|
\le
L_2\|Z\|_{\op}\|a\|.
\end{equation}
The first inequality is immediate when $a=0$ and otherwise follows by division by $\|a\|$. Combining the two inequalities yields
\begin{equation}\label{eq:stable-score-lower}
\|\eta\|
\le
\|\dot\ell(a)\|+L_2\|Z\|_{\op}\|a\|
\le
\left(1+\frac{L_2\|Z\|_{\op}^2}{\mu}\right)
\|\dot\ell(a)\|.
\end{equation}
Using~\eqref{eq:normbound} instead gives
\begin{equation}\label{eq:stable-score-upper}
\|\dot\ell(a)\|
\le
\|\eta\|+L_2\|Z\|_{\op}\|a\|
\le
\left(1+\frac{L_2\|Z\|_{\op}^2}{\mu}\right)
\|\eta\|.
\end{equation}
Rearranging~\eqref{eq:stable-score-lower} and combining it with~\eqref{eq:stable-score-upper} proves~\eqref{eq:scoresandwich}.

\noindent\textbf{Step 4: deletion stability and score contraction.}
Fix $i\in\{1,\ldots,n\}$ and recall that $c_i=\ell'(y_i;x_i^\top b)$. Since
\begin{equation*}
0\in\partial F(b),
\qquad
\partial F_i(b)=\partial F(b)-c_ix_i,
\qquad
0\in\partial F_i(b_i),
\end{equation*}
we have $-c_ix_i\in\partial F_i(b)$. Strong monotonicity of $\partial F_i$ therefore gives
\begin{equation}\label{eq:stable-deletion-monotonicity}
-c_ix_i^\top(b-b_i)
\ge
\mu\|b-b_i\|^2.
\end{equation}
If $b=b_i$, the displacement bound is immediate. Otherwise, Cauchy--Schwarz and division by $\|b-b_i\|$ give
\begin{equation*}
\|b-b_i\|
\le
\mu^{-1}|c_i|\|x_i\|,
\end{equation*}
which is the first assertion in~\eqref{eq:scorecontraction}.

Define
\begin{equation*}
c_i^-=\ell'(y_i;x_i^\top b_i).
\end{equation*}
Equation~\eqref{eq:stable-deletion-monotonicity} implies $-c_ix_i^\top(b-b_i)\ge0$. Together with monotonicity of $\ell'(y_i;\cdot)$, this yields
\begin{equation*}
\begin{aligned}
x_i^\top b>x_i^\top b_i
&\quad\Longrightarrow\quad
c_i^-\le c_i\le0,\\
x_i^\top b<x_i^\top b_i
&\quad\Longrightarrow\quad
0\le c_i\le c_i^-,\\
x_i^\top b=x_i^\top b_i
&\quad\Longrightarrow\quad
c_i=c_i^-.
\end{aligned}
\end{equation*}
Thus in all cases
\begin{equation}\label{eq:stable-score-order}
|c_i|\le|c_i^-|,
\end{equation}
which completes~\eqref{eq:scorecontraction}.
\end{proof}
\subsection{Proof of Lemma \ref{lem:strong_convexity}}\label{proof:lemstrong:sconvex}
\begin{proof}
Since $F_i$ is $\mu$-strongly convex and $b_i$ is its unrestricted minimizer,
\[
F_i(\beta)-F_i(b_i)
\ge
\frac{\mu}{2}\|\beta-b_i\|^2
\]
for every $\beta\in\mathbb R^p$. Since $a_i\in D(b)$ and $m_i^D$ minimizes $F_i$ over $D(b)$,
\[
F_i(m_i^D)\le F_i(a_i).
\]
Therefore,
\[
\frac{\mu}{2}\|m_i^D-b_i\|^2
\le
F_i(m_i^D)-F_i(b_i)
\le
F_i(a_i)-F_i(b_i).
\]

It remains to prove the upper bound. By the optimality conditions for $b_i$ and $b$, there exist
\[
z_i\in\partial r(b_i),
\qquad
z\in\partial r(b),
\]
such that
\[
v_i^0=\mu b_i+\lambda_1 z_i,
\qquad
v(b)=\mu b+\lambda_1 z.
\]
Hence, for every block $g$,
\[
v_g(b)-v_{i,g}^0
=
\mu(b_g-b_{i,g})
+
\lambda_1(z_g-z_{i,g}).
\]
Since $r_g$ is convex, the monotonicity of $\partial r_g$ gives
\[
\langle z_g-z_{i,g},b_g-b_{i,g}\rangle\ge 0.
\]
Therefore,
\[
\mu\|b_g-b_{i,g}\|^2
\le
\langle v_g(b)-v_{i,g}^0,b_g-b_{i,g}\rangle
\le
\|v_g(b)-v_{i,g}^0\|\,\|b_g-b_{i,g}\|,
\]
and thus
\[
\|b_g-b_{i,g}\|
\le
\frac{1}{\mu}\|v_g(b)-v_{i,g}^0\|.
\]
Since $a_i-b_i$ is supported on $\mathcal S_i$,
\[
\|a_i-b_i\|^2
=
\sum_{g\in\mathcal S_i}\|b_g-b_{i,g}\|^2
\le
\frac{1}{\mu^2}
\sum_{g\in\mathcal S_i}
\|v_g(b)-v_{i,g}^0\|^2.
\]

Let $\delta_i=a_i-b_i$. Using
\[
\nabla L_i(b_i)+\mu b_i+\lambda_1 z_i=0,
\]
we can write
\[
F_i(a_i)-F_i(b_i)
=
D_{L_i}(a_i,b_i)
+
\lambda_1 D_r^{z_i}(a_i,b_i)
+
\frac{\mu}{2}\|\delta_i\|^2,
\]
where
\[
D_{L_i}(a_i,b_i)
=
L_i(a_i)-L_i(b_i)
-\langle \nabla L_i(b_i),\delta_i\rangle,
\]
and
\[
D_r^{z_i}(a_i,b_i)
=
r(a_i)-r(b_i)-\langle z_i,\delta_i\rangle.
\]

By Assumption~2.4,
\[
\nabla^2 L_i(\beta)
\preceq
L_2 X_i^\top X_i.
\]
Hence,
\[
D_{L_i}(a_i,b_i)
\le
\frac{L_2}{2}\|X_i\delta_i\|^2
\le
\frac{L_2K_X^2}{2}\|\delta_i\|^2,
\]
and therefore
\[
D_{L_i}(a_i,b_i)
\le
\frac{L_2K_X^2}{2\mu^2}
\sum_{g\in\mathcal S_i}
\|v_g(b)-v_{i,g}^0\|^2.
\]

For the penalty term, since $\delta_{i,g}=0$ for $g\notin\mathcal S_i$,
\[
D_r^{z_i}(a_i,b_i)
=
\sum_{g\in\mathcal S_i}
\Bigl[
r_g(b_g)-r_g(b_{i,g})
-
\langle z_{i,g},b_g-b_{i,g}\rangle
\Bigr].
\]
By convexity of $r_g$,
\[
r_g(b_{i,g})
\ge
r_g(b_g)+
\langle z_g,b_{i,g}-b_g\rangle,
\]
and hence
\[
r_g(b_g)-r_g(b_{i,g})
-
\langle z_{i,g},b_g-b_{i,g}\rangle
\le
\langle z_g-z_{i,g},b_g-b_{i,g}\rangle.
\]
Using the relation between the primal and dual differences,
\[
\lambda_1
\langle z_g-z_{i,g},b_g-b_{i,g}\rangle
=
\langle v_g(b)-v_{i,g}^0,b_g-b_{i,g}\rangle
-
\mu\|b_g-b_{i,g}\|^2.
\]
Therefore,
\[
\lambda_1 D_r^{z_i}(a_i,b_i)
+
\frac{\mu}{2}\|\delta_i\|^2
\le
\sum_{g\in\mathcal S_i}
\langle v_g(b)-v_{i,g}^0,b_g-b_{i,g}\rangle.
\]
Applying Cauchy--Schwarz and the preceding blockwise bound yields
\[
\lambda_1 D_r^{z_i}(a_i,b_i)
+
\frac{\mu}{2}\|\delta_i\|^2
\le
\frac{1}{\mu}
\sum_{g\in\mathcal S_i}
\|v_g(b)-v_{i,g}^0\|^2.
\]

Combining the last two estimates, we obtain
\[
F_i(a_i)-F_i(b_i)
\le
\left(
\frac{1}{\mu}
+
\frac{L_2K_X^2}{2\mu^2}
\right)
\sum_{g\in\mathcal S_i}
\|v_g(b)-v_{i,g}^0\|^2.
\]
Thus the result holds with
\[
C=
\frac{1}{\mu}
+
\frac{L_2K_X^2}{2\mu^2}.
\]
\end{proof}

\subsection{Proof of Lemma~\ref{lem:tilt}}

\begin{proof}
Fix $i\in\{1,\ldots,n\}$ and a realization in $\mathcal A_i\cap\mathcal G_i$ with $0<t_i\le1$. Recall that $t_i=D_0S_ih_n$ and $k_i=N_i(2t_i)+S_i^2$. All quantities in the proof are evaluated at this realization.

\noindent\textbf{Step 1: the tilted objective and its linear response.}
The full-data first-order condition supplies a vector $z\in\partial r(b)$ such that
\begin{equation}\label{eq:tilt-full-kkt}
 \nabla L_i(b)+c_ix_i+\lambda_1z+\mu b=0.
\end{equation}
Define
\begin{equation*}
\widetilde F_i(\beta)=F_i(\beta)+c_ix_i^\top\beta.
\end{equation*}
Here $c_i=\ell'(y_i;x_i^\top b)$ is fixed as $\beta$ varies. Equation~\eqref{eq:tilt-full-kkt} gives $0\in\partial\widetilde F_i(b)$, and strong convexity gives
\begin{equation*}
\widetilde F_i(u)-\widetilde F_i(b)\ge\frac{\mu}{2}\|u-b\|^2,
\qquad u\in\mathbb R^p.
\end{equation*}
Thus $b$ is the unique minimizer of $\widetilde F_i$.

Define
\begin{equation*}
u_i=-c_iQ_ix_i,\qquad d_i=J_iu_i.
\end{equation*}
The identities in Lemma~\ref{lem:resolvent} imply
\begin{equation}\label{eq:tilt-linear-identities}
 \begin{aligned}
 d_i&=-c_iJ_i(I+H_iJ_i)^{-1}x_i=-c_iB_ix_i,\\
 (I+H_iJ_i)u_i&=-c_ix_i,\\
 u_i+c_ix_i&=-H_iJ_iu_i=-H_id_i.
 \end{aligned}
\end{equation}
Since $|c_i|\le S_i$, the event $\mathcal G_i$, the operator bounds in~\eqref{eq:gaussian-operator-bounds}, and $D_0\ge4C_G$ give
\begin{equation}\label{eq:tilt-linear-bounds-detail}
 \begin{aligned}
 \|u_i\|_{2,\infty}&\le C_GS_ih_n\le t_i/4,\\
 \|u_i\|+\|d_i\|+\|X_id_i\|&\le CS_i\|x_i\|\le CS_i,\\
 \|X_id_i\|_\infty&\le C_GS_ih_n.
 \end{aligned}
\end{equation}
These are the bounds in~\eqref{eq:linearcontrol}, derived before any Taylor expansion.

\noindent\textbf{Step 2: a blockwise proximal expansion of the trial point.}
Set $v_i^T=v_i^0+u_i$ and $b_i^T=\Phi(v_i^T)$. Deleted optimality says $\Phi(v_i^0)=b_i$. Hence
\begin{equation}\label{eq:tilt-trial-decomposition}
 b_i^T-b_i=d_i+r_i,\qquad
 r_{i,g}=\Phi_g(v_{i,g}^0+u_{i,g})-\Phi_g(v_{i,g}^0)
                       -J_g(v_{i,g}^0)u_{i,g}.
\end{equation}
For each block, let
\begin{equation*}
\mathcal L_{ig}^{T}=\{v_{i,g}^0+su_{i,g}:0\le s\le1\}.
\end{equation*}
If $g\notin\mathcal B_i$, then
\begin{equation*}
\begin{aligned}
\dist(v_{i,g}^0+su_{i,g},\partial K_g)
&\ge\dist(v_{i,g}^0,\partial K_g)-s\|u_{i,g}\|\\
&>2t_i-t_i/4>0,\qquad 0\le s\le1.
\end{aligned}
\end{equation*}
Thus $\mathcal L_{ig}^{T}\cap\partial K_g=\varnothing$. Apply the Taylor bound in Lemma~\ref{lem:prox} on these blocks and its global Lipschitz bound on $g\in\mathcal B_i$ to obtain
\begin{equation}\label{eq:tilt-block-remainders}
 \|r_{i,g}\|\le
 \begin{cases}
 (C_\Phi/2)\|u_{i,g}\|^2,&g\notin\mathcal B_i,\\
 2\mu^{-1}\|u_{i,g}\|,&g\in\mathcal B_i.
 \end{cases}
\end{equation}
For the smooth blocks,
\begin{equation}\label{eq:tilt-smooth-square-sum}
 \left(\sum_{g\notin\mathcal B_i}\|r_{i,g}\|^2\right)^{1/2}
 \le C\left(\sum_g\|u_{i,g}\|^4\right)^{1/2}
 \le C\|u_i\|_{2,\infty}\|u_i\|.
\end{equation}
For the boundary blocks, $|\mathcal B_i|=N_i(2t_i)$ and
\begin{equation}\label{eq:tilt-boundary-square-sum}
 \left(\sum_{g\in\mathcal B_i}\|r_{i,g}\|^2\right)^{1/2}
 \le C\|u_i\|_{2,\infty}\sqrt{N_i(2t_i)}
 \le Ct_i\sqrt{N_i(2t_i)}.
\end{equation}
Combining these estimates, and using $t_i=D_0S_ih_n$ and $k_i=N_i(2t_i)+S_i^2$, gives
\begin{equation}\label{eq:proxremainder}
 \begin{aligned}
 \|r_i\|
 &\le C\|u_i\|_{2,\infty}\|u_i\|
             +Ct_i\sqrt{N_i(2t_i)}\\
 &\le C\{S_i^2h_n+t_i\sqrt{N_i(2t_i)}\}\\
 &\le Ct_i\{S_i+\sqrt{N_i(2t_i)}\}
 \le Ct_i\sqrt{k_i}.
 \end{aligned}
\end{equation}
The last step uses $(a+b)^2\le2(a^2+b^2)$. In particular, the smooth remainder is accounted for by $S_i^2$ inside $k_i$ even when the boundary count vanishes.

\noindent\textbf{Step 3: the residual of the tilted objective.}
Proximal optimality at $b_i^T$ gives $v_i^T\in\mu b_i^T+\lambda_1\partial r(b_i^T)$. It follows that
\begin{equation}\label{eq:tilt-trial-subgradient}
 g_i^T=\nabla L_i(b_i^T)+c_ix_i+v_i^T
       \in\partial\widetilde F_i(b_i^T).
\end{equation}
Use $v_i^0=-\nabla L_i(b_i)$, $b_i^T=b_i+d_i+r_i$, and~\eqref{eq:tilt-linear-identities} to write the residual as
\begin{equation}\label{eq:tilt-residual-decomposition}
 \begin{aligned}
 g_i^T
 &=\nabla L_i(b_i^T)-\nabla L_i(b_i)+c_ix_i+u_i\\
 &=\nabla L_i(b_i+d_i)-\nabla L_i(b_i)-H_id_i
 +\nabla L_i(b_i+d_i+r_i)-\nabla L_i(b_i+d_i)\\
 &=R_i^{\text{loss}}+R_i^{\text{prox}},
 \end{aligned}
\end{equation}
where $R_i^{\text{loss}}=\nabla L_i(b_i+d_i)-\nabla L_i(b_i)-H_id_i$, and $R_i^{\text{prox}}=\nabla L_i(b_i+d_i+r_i)-\nabla L_i(b_i+d_i)$.
To bound $R_i^{\text{loss}}$, let $\xi_{ij}=x_j^\top b_i$ and $\delta_{ij}^{\mathrm{lin}}=x_j^\top d_i$ for $j\ne i$. The fundamental theorem of calculus gives
\begin{equation}\label{eq:tilt-loss-score-remainder}
 \begin{aligned}
 \rho_j
 &=\ell'(y_j;\xi_{ij}+\delta_{ij}^{\mathrm{lin}})-\ell'(y_j;\xi_{ij})-\ell''(y_j;\xi_{ij})\delta_{ij}^{\mathrm{lin}}\\
 &=\delta_{ij}^{\mathrm{lin}}\int_0^1[\ell''(y_j;\xi_{ij}+s\delta_{ij}^{\mathrm{lin}})-\ell''(y_j;\xi_{ij})]\,ds,\\
 |\rho_j|&\le\frac{L_3}{2}|\delta_{ij}^{\mathrm{lin}}|^2.
 \end{aligned}
\end{equation}
Set $\rho=(\rho_j)_{j\ne i}$. The componentwise identities give $R_i^{\text{loss}}=X_i^\top\rho$. The estimate uses only the Lipschitz continuity of $\ell''$. By~\eqref{eq:tilt-linear-bounds-detail},
\begin{equation}\label{eq:tilt-loss-vector-remainder}
 \begin{aligned}
 R_i^{\text{loss}}=\|X_i^\top\rho\|
 &\le\frac{L_3}{2}\|X_i\|_{\op}
       \left(\sum_{j\ne i}|\delta_{ij}^{\mathrm{lin}}|^4\right)^{1/2}\\
 &\le\frac{L_3}{2}\|X_i\|_{\op}
                  \|X_id_i\|_\infty\|X_id_i\|\\
 &\le CS_i^2h_n\le Ct_i\sqrt{k_i}.
 \end{aligned}
\end{equation}
The remaining term $R_i^{\text{prox}}$ satisfies
\begin{equation}\label{eq:tilt-loss-lipschitz-remainder}
 \|R_i^{\text{prox}}\|
 \le L_2\|X_i\|_{\op}^2\|r_i\|
 \le Ct_i\sqrt{k_i}.
\end{equation}
Hence $\|g_i^T\|\le Ct_i\sqrt{k_i}$. Strong monotonicity between $g_i^T\in\partial\widetilde F_i(b_i^T)$ and $0\in\partial\widetilde F_i(b)$ gives
\begin{equation}\label{eq:tilt-primal-localization}
 \mu\|b_i^T-b\|^2
 \le\langle g_i^T,b_i^T-b\rangle
 \le\|g_i^T\|\|b_i^T-b\|,
 \qquad \|b_i^T-b\|\le Ct_i\sqrt{k_i}.
\end{equation}
For the dual difference, use $v(b)=-\nabla L_i(b)-c_ix_i$ and~\eqref{eq:tilt-trial-subgradient}:
\begin{equation}\label{eq:tilt-dual-localization}
 \begin{aligned}
 v(b)-v_i^T
 &=-[\nabla L_i(b)-\nabla L_i(b_i^T)]-g_i^T,\\
 \|v(b)-v_i^T\|
 &\le L_2K_X^2\|b-b_i^T\|+\|g_i^T\|
 \le Ct_i\sqrt{k_i}.
 \end{aligned}
\end{equation}
Together~\eqref{eq:tilt-primal-localization} and~\eqref{eq:tilt-dual-localization} prove~\eqref{eq:tilterror}.

\noindent\textbf{Step 4: localization and counting of actual stratum changes.}
Define
\begin{equation}\label{eq:tilt-exceptional-set}
 E_i=\{g:\|v_g(b)-v_{i,g}^T\|>t_i\}.
\end{equation}
Since $\|v_g(b)-v_{i,g}^T\|>t_i$ for $g\in E_i$,
\begin{equation*}
|E_i|t_i^2
\le\sum_{g\in E_i}\|v_g(b)-v_{i,g}^T\|^2
\le\|v(b)-v_i^T\|^2.
\end{equation*}
Using~\eqref{eq:tilt-dual-localization}, we obtain
\begin{equation}\label{eq:tilt-exceptional-count}
 |E_i|t_i^2\le\|v(b)-v_i^T\|^2\le Ct_i^2k_i,
 \qquad |E_i|\le Ck_i.
\end{equation}
For $g\notin\mathcal B_i\cup E_i$,
\begin{equation}\label{eq:tilt-no-crossing-distance}
 \|v_g(b)-v_{i,g}^0\|
 \le\|v_g(b)-v_{i,g}^T\|+\|u_{i,g}\|
 \le\frac54t_i<2t_i
 <\dist(v_{i,g}^0,\partial K_g).
\end{equation}
For $g\notin\mathcal B_i\cup E_i$, define
\begin{equation*}
\mathcal L_{ig}=\{(1-s)v_{i,g}^0+sv_g(b):0\le s\le1\}.
\end{equation*}
Equation~\eqref{eq:tilt-no-crossing-distance} implies $\mathcal L_{ig}\cap\partial K_g=\varnothing$. If $\mathcal L_{ig}\subseteq\operatorname{int}K_g$, then $b_g=b_{i,g}=0$. For $d_g=1$, write $K_g=[\kappa_g^-,\kappa_g^+]$. The scalar sign characterization in Lemma~\ref{lem:prox} gives
\begin{equation*}
\begin{aligned}
\mathcal L_{ig}\subseteq(\kappa_g^+,\infty)
&\ \Longrightarrow\ b_g>0,\ b_{i,g}>0,\\
\mathcal L_{ig}\subseteq(-\infty,\kappa_g^-)
&\ \Longrightarrow\ b_g<0,\ b_{i,g}<0.
\end{aligned}
\end{equation*}
For $d_g\ge2$, $\mathcal L_{ig}\subseteq\mathbb R^{d_g}\setminus K_g$ implies $b_g\ne0$ and $b_{i,g}\ne0$. In each case $M_g(b_g)=M_g(b_{i,g})$. Therefore,
\begin{equation}\label{eq:tilt-change-count-detail}
 \mathcal S_i\subseteq\mathcal B_i\cup E_i,
 \qquad |\mathcal S_i|\le N_i(2t_i)+Ck_i\le Ck_i.
\end{equation}
Finally, $v_g(b)-v_{i,g}^0=(v_g(b)-v_{i,g}^T)+u_{i,g}$, so
\begin{equation}\label{eq:tilt-change-energy-detail}
 \begin{aligned}
 \sum_{g\in\mathcal S_i}\|v_g(b)-v_{i,g}^0\|^2
 &\le2\sum_{g\in\mathcal S_i}\|v_g(b)-v_{i,g}^T\|^2
       +2\sum_{g\in\mathcal S_i}\|u_{i,g}\|^2\\
 &\le2\|v(b)-v_i^T\|^2
       +2|\mathcal S_i|\|u_i\|_{2,\infty}^2
 \le Ct_i^2k_i.
 \end{aligned}
\end{equation}
Equations~\eqref{eq:tilt-change-count-detail}--\eqref{eq:tilt-change-energy-detail} prove~\eqref{eq:swap}.
\end{proof}

\subsection{Proof of Lemma~\ref{lem:bridge}}

\begin{proof}
Fix $D\in\{A,C\}$ and write $m_i=m_i^D$. Throughout the proof, work on $\mathcal A_i\cap\mathcal G_i$ with $0<t_i\le1$.

\noindent\textbf{Step 1: construction and feasibility of the hybrid.}
Define the block vector $a_i$ by
\begin{equation}\label{eq:bridge-hybrid-definition}
 a_{i,g}=\begin{cases}
 b_{i,g},&g\notin\mathcal S_i,\\
 b_g,&g\in\mathcal S_i.
 \end{cases}
\end{equation}
By the definition of $\mathcal S_i$,
\begin{equation*}
\begin{aligned}
g\notin\mathcal S_i
&\ \Longrightarrow\ a_{i,g}=b_{i,g}\in M_g(b_{i,g})=M_g(b_g),\\
g\in\mathcal S_i
&\ \Longrightarrow\ a_{i,g}=b_g\in M_g(b_g).
\end{aligned}
\end{equation*}
Taking the product over $g$ gives
\begin{equation}\label{eq:bridge-hybrid-feasible}
 a_i\in\mathcal M^\circ(b)\subseteq C(b)\subseteq A(b).
\end{equation}
Since $m_i=\arg\min_{\beta\in D(b)}F_i(\beta)$ and $a_i\in D(b)$,
\begin{equation}\label{eq:bridge-objective-order}
 F_i(m_i)\le F_i(a_i).
\end{equation}
The hybrid is used only as a feasible competitor; it is not asserted to be independent of observation $i$.

\noindent\textbf{Step 2: a Bregman comparison centered at the deleted fit.}
Put $f_i(\beta)=L_i(\beta)+\mu\|\beta\|^2/2$ and
\begin{equation}\label{eq:bridge-deleted-kkt}
 z_i=\frac{v_i^0-\mu b_i}{\lambda_1}\in\partial r(b_i),
 \qquad \nabla f_i(b_i)+\lambda_1z_i=0.
\end{equation}
The Hessian satisfies $\mu I\preceq\nabla^2f_i\preceq L_GI$, where $L_G=\mu+L_2K_X^2$. For any vector $u$, the integral Taylor formula gives
\begin{equation}\label{eq:bridge-smooth-bregman}
 \begin{aligned}
 D_{f_i}(u,b_i)
 &:=f_i(u)-f_i(b_i)-\langle\nabla f_i(b_i),u-b_i\rangle\\
 &=\int_0^1(1-s)(u-b_i)^\top
      \nabla^2f_i(b_i+s(u-b_i))(u-b_i)\,ds,\\
 \frac\mu2\|u-b_i\|^2
 &\le D_{f_i}(u,b_i)\le\frac{L_G}{2}\|u-b_i\|^2.
 \end{aligned}
\end{equation}
The identity $\nabla f_i(b_i)+\lambda_1z_i=0$ cancels the linear terms:
\begin{equation*}
\begin{aligned}
F_i(u)-F_i(b_i)
&=D_{f_i}(u,b_i)+\lambda_1D_r^{z_i}(u,b_i)\\
&\quad+\langle\nabla f_i(b_i)+\lambda_1z_i,u-b_i\rangle.
\end{aligned}
\end{equation*}
Hence
\begin{equation}\label{eq:bridge-objective-bregman}
 F_i(u)-F_i(b_i)=D_{f_i}(u,b_i)+\lambda_1D_r^{z_i}(u,b_i).
\end{equation}
Apply the lower bound from~\eqref{eq:bridge-smooth-bregman} at $u=m_i$, the upper bound at $u=a_i$, and~\eqref{eq:bridge-objective-order}. This yields
\begin{equation}\label{eq:hybridbregman}
 \frac\mu2\|m_i-b_i\|^2+\lambda_1D_r^{z_i}(m_i,b_i)
 \le\lambda_1D_r^{z_i}(a_i,b_i)+\frac{L_G}{2}\|a_i-b_i\|^2.
\end{equation}
Both terms on the left are nonnegative.

\noindent\textbf{Step 3: evaluate every nonzero contribution on the right.}
Separability and~\eqref{eq:bridge-hybrid-definition} imply
\begin{equation}\label{eq:bridge-supported-contributions}
 \begin{aligned}
 D_r^{z_i}(a_i,b_i)
 &=\sum_{g\in\mathcal S_i}D_{r_g}^{z_{i,g}}(b_g,b_{i,g}),\\
 \|a_i-b_i\|^2&=\sum_{g\in\mathcal S_i}\|b_g-b_{i,g}\|^2.
 \end{aligned}
\end{equation}
For $g\in\mathcal S_i$, define
\begin{equation*}
q_g=v_g(b)-v_{i,g}^0,\qquad e_g=b_g-b_{i,g},\qquad
z_g=\frac{v_g(b)-\mu b_g}{\lambda_1}\in\partial r_g(b_g).
\end{equation*}
Since $b_g=\Phi_g(v_g(b))$ and $b_{i,g}=\Phi_g(v_{i,g}^0)$, the energy identity in Lemma~\ref{lem:prox} gives
\begin{equation}\label{eq:bridge-block-energy}
 \mu\|e_g\|^2
 +\lambda_1D_{r_g}^{z_{i,g}}(b_g,b_{i,g})
 +\lambda_1D_{r_g}^{z_g}(b_{i,g},b_g)
 =\langle q_g,e_g\rangle.
\end{equation}
The last Bregman term is nonnegative, and $\|e_g\|\le\mu^{-1}\|q_g\|$ by proximal Lipschitz continuity. Therefore
\begin{equation}\label{eq:bridge-block-upper}
 \begin{aligned}
 &\lambda_1D_{r_g}^{z_{i,g}}(b_g,b_{i,g})
                      +\frac{L_G}{2}\|e_g\|^2\\
 &\quad\le\langle q_g,e_g\rangle+\frac{L_G}{2}\|e_g\|^2
 \le\left(\frac1\mu+\frac{L_G}{2\mu^2}\right)\|q_g\|^2.
 \end{aligned}
\end{equation}
Sum~\eqref{eq:bridge-block-upper}, apply~\eqref{eq:swap}, and then drop the nonnegative penalty term on the left of~\eqref{eq:hybridbregman}:
\begin{equation}\label{eq:bridge-primal-bound-detail}
 \frac\mu2\|m_i-b_i\|^2
 \le C\sum_{g\in\mathcal S_i}\|q_g\|^2
 \le Ct_i^2k_i,
 \qquad \|m_i-b_i\|\le Ct_i\sqrt{k_i}.
\end{equation}
For $\beta\in\mathbb R^p$, define the deleted score vector
\begin{equation*}
\mathbf s_i(\beta)=(\ell'(y_j;x_j^\top\beta))_{j\ne i}.
\end{equation*}
Since $v_i(\beta)=-X_i^\top\mathbf s_i(\beta)$,
\begin{equation}\label{eq:bridge-dual-bound-detail}
 \begin{aligned}
 \|v_i(b_i)-v_i(m_i)\|
 &\le\|X_i\|_{\op}
       \|\mathbf s_i(b_i)-\mathbf s_i(m_i)\|\\
 &\le L_2\|X_i\|_{\op}\|X_i(b_i-m_i)\|
 \le L_2K_X^2\|b_i-m_i\|
 \le Ct_i\sqrt{k_i}.
 \end{aligned}
\end{equation}
This proves~\eqref{eq:vectorlocal} without bounding the primal penalty Hessian near zero.
\end{proof}

\subsection{Proof of Lemma~\ref{lem:prediction}}

\begin{proof}
Fix $D\in\{A,C\}$ and set $m_i=m_i^D$ and $\Delta_i=b_i-m_i$. Work on $\mathcal A_i\cap\mathcal G_i$ with $0<t_i\le1$.

\noindent\textbf{Step 1: isolate all exceptional blocks.}
Write $\delta v_i=v_i(b_i)-v_i(m_i)$ and retain the set $E_i$ in~\eqref{eq:tilt-exceptional-set}. Define
\begin{equation}\label{eq:prediction-exceptional-sets}
 E_i'=\{g:\|\delta v_{i,g}\|>t_i\},\qquad
 D_i=\mathcal B_i\cup E_i\cup E_i'.
\end{equation}
The squared dual-displacement bound in Lemma~\ref{lem:bridge} gives
\begin{equation}\label{eq:prediction-exceptional-cardinality}
 |E_i'|\le t_i^{-2}\|\delta v_i\|^2\le Ck_i,
 \qquad |D_i|\le N_i(2t_i)+|E_i|+|E_i'|\le Ck_i.
\end{equation}
If $g\notin D_i$, it is outside $\mathcal B_i\cup E_i$, so its full and deleted strata agree by Lemma~\ref{lem:tilt}. It also satisfies
\begin{equation}\label{eq:prediction-safe-segment}
 \dist(v_{i,g}^0,\partial K_g)>2t_i,\qquad
 \|\delta v_{i,g}\|\le t_i,
 \qquad
 \dist(v_{i,g}^0-s\delta v_{i,g},\partial K_g)>t_i\quad(0\le s\le1).
\end{equation}
Indeed, for $0\le s\le1$,
\begin{equation*}
\dist(v_{i,g}^0-s\delta v_{i,g},\partial K_g)
\ge\dist(v_{i,g}^0,\partial K_g)-s\|\delta v_{i,g}\|
>2t_i-t_i=t_i.
\end{equation*}
Thus the segment from $v_{i,g}^0$ to $v_{i,g}(m_i)=v_{i,g}^0-\delta v_{i,g}$ does not meet $\partial K_g$.

\noindent\textbf{Step 2: justify the proximal equation on each nonexceptional block.}
We show
\begin{equation}\label{eq:prediction-safe-proximal-equation}
 m_{i,g}=\Phi_g(v_{i,g}(m_i)),\qquad g\notin D_i,
\end{equation}
by treating the constraints explicitly.

If $g\notin S(b)$, stratum agreement and feasibility give $b_g=b_{i,g}=m_{i,g}=0$. Since $\Phi_g(v_{i,g}^0)=0$ and $\dist(v_{i,g}^0,\partial K_g)>0$, one has $v_{i,g}^0\in\operatorname{int}K_g$. By~\eqref{eq:prediction-safe-segment},
\begin{equation*}
\{v_{i,g}^0-s\delta v_{i,g}:0\le s\le1\}\subseteq\operatorname{int}K_g.
\end{equation*}
Consequently, $\Phi_g(v_{i,g}(m_i))=0=m_{i,g}$.

If $g\in S(b)$ and $D=A$, the $g$th factor of $A(b)$ is $\mathbb R^{d_g}$. First-order optimality in this block gives
\begin{equation}\label{eq:prediction-free-block-kkt}
 v_{i,g}(m_i)\in\mu m_{i,g}+\lambda_1\partial r_g(m_{i,g}),
\end{equation}
which is equivalent to~\eqref{eq:prediction-safe-proximal-equation}. The same argument applies when $g\in S(b)$, $D=C$, and $d_g\ge2$, because $\cl M_g(b_g)=\mathbb R^{d_g}$.

It remains to handle an active singleton factor of $C(b)$. For a closed convex set $K$ and $a\in K$, use the normal-cone convention
\begin{equation}\label{eq:prediction-normal-definition}
 N_K(a)=\{w:\langle w,u-a\rangle\le0\text{ for every }u\in K\}.
\end{equation}
Continuity of the finite convex penalty permits the subgradient sum rule with the domain indicator. Thus block optimality is
\begin{equation}\label{eq:prediction-constrained-block-kkt}
 v_{i,g}(m_i)\in\mu m_{i,g}+\lambda_1\partial r_g(m_{i,g})
                           +N_{C_g}(m_{i,g}),\qquad C_g=\cl M_g(b_g).
\end{equation}
Write $K_g=[\kappa_g^-,\kappa_g^+]$ and define
\begin{equation*}
\widehat a_{i,g}=\Phi_g(v_{i,g}(m_i)).
\end{equation*}
If $b_g>0$, stratum agreement and~\eqref{eq:prediction-safe-segment} give
\begin{equation*}
b_{i,g}>0,\qquad v_{i,g}^0>\kappa_g^+,\qquad
v_{i,g}(m_i)>\kappa_g^+,\qquad \widehat a_{i,g}>0.
\end{equation*}
Thus $\widehat a_{i,g}\in\operatorname{int}C_g$, where $C_g=[0,\infty)$, and $N_{C_g}(\widehat a_{i,g})=\{0\}$. In particular, $\widehat a_{i,g}$ solves the inclusion in~\eqref{eq:prediction-constrained-block-kkt} at the fixed input $v_{i,g}(m_i)$. Moreover,
\begin{equation}\label{eq:prediction-positive-normal}
 N_{[0,\infty)}(0)=(-\infty,0],\qquad
 K_g+N_{[0,\infty)}(0)=(-\infty,\kappa_g^+].
\end{equation}
If $m_{i,g}=0$, equations~\eqref{eq:prediction-constrained-block-kkt} and~\eqref{eq:prediction-positive-normal} would imply $v_{i,g}(m_i)\le\kappa_g^+$, contradicting $v_{i,g}(m_i)>\kappa_g^+$. Hence $m_{i,g}>0$.

If $b_g<0$, the corresponding inequalities are
\begin{equation*}
b_{i,g}<0,\qquad v_{i,g}^0<\kappa_g^-,\qquad
v_{i,g}(m_i)<\kappa_g^-,\qquad \widehat a_{i,g}<0.
\end{equation*}
Now $C_g=(-\infty,0]$ and $N_{C_g}(\widehat a_{i,g})=\{0\}$, so $\widehat a_{i,g}$ again solves~\eqref{eq:prediction-constrained-block-kkt}. In this case,
\begin{equation}\label{eq:prediction-negative-normal}
 N_{(-\infty,0]}(0)=[0,\infty),\qquad
 K_g+N_{(-\infty,0]}(0)=[\kappa_g^-,\infty),
\end{equation}
Thus $m_{i,g}=0$ would imply $v_{i,g}(m_i)\ge\kappa_g^-$, again a contradiction. Therefore $m_{i,g}<0$.

For either ray, $\mu I+\lambda_1\partial r_g+N_{C_g}$ is strongly monotone. Comparing its two solutions $\widehat a_{i,g}$ and $m_{i,g}$ at the same input gives
\begin{equation*}
\mu|\widehat a_{i,g}-m_{i,g}|^2
\le\langle v_{i,g}(m_i)-v_{i,g}(m_i),
\widehat a_{i,g}-m_{i,g}\rangle=0.
\end{equation*}
Hence $m_{i,g}=\widehat a_{i,g}=\Phi_g(v_{i,g}(m_i))$, proving~\eqref{eq:prediction-safe-proximal-equation}. This comparison fixes the scalar input $v_{i,g}(m_i)$; it does not require the observation loss to be separable.

\noindent\textbf{Step 3: define the remainders and bound their block sums.}
Define vectors with disjoint supports by
\begin{equation}\label{eq:prediction-remainder-definition}
 \begin{aligned}
 \zeta_{i,g}&=\begin{cases}
 \Delta_{i,g}-J_g(v_{i,g}^0)\delta v_{i,g},&g\notin D_i,\\
 0,&g\in D_i,
 \end{cases}\\
 \nu_{i,g}&=\begin{cases}
 0,&g\notin D_i,\\
 \Delta_{i,g}-J_g(v_{i,g}^0)\delta v_{i,g},&g\in D_i.
 \end{cases}
 \end{aligned}
\end{equation}
Then, by definition,
\begin{equation}\label{eq:prediction-proximal-decomposition}
 \Delta_i=J_i \delta v_i+\zeta_i+\nu_i.
\end{equation}
For $g\notin D_i$, equation~\eqref{eq:prediction-safe-proximal-equation} and Taylor expansion along the safe segment give
\begin{equation}\label{eq:prediction-smooth-remainder-integral}
 \begin{aligned}
 \Phi_g(v_{i,g}^0-\delta v_{i,g})
 &=\Phi_g(v_{i,g}^0)-J_g(v_{i,g}^0)\delta v_{i,g}+R_{i,g},\\
 \|R_{i,g}\|&\le(C_\Phi/2)\|\delta v_{i,g}\|^2,
 \qquad \zeta_{i,g}=-R_{i,g}.
 \end{aligned}
\end{equation}
On an interior inactive segment $R_{i,g}=0$; on an exterior segment it is the integral in~\eqref{eq:prox-taylor-integral} with direction $-\delta v_{i,g}$. By Lemma~\ref{lem:bridge}, summing, rather than taking a Euclidean norm of the blockwise bounds, gives
\begin{equation}\label{eq:zeta}
 \|\zeta_i\|_{2,1}
 \le C\sum_{g\notin D_i}\|\delta v_{i,g}\|^2
 \le C\|\delta v_i\|^2\le Ct_i^2k_i.
\end{equation}
For the exceptional blocks there is no Taylor claim. Lemma~\ref{lem:prox}, Lemma~\ref{lem:bridge} and Cauchy--Schwarz on a set with $|D_i|$ blocks gives
\begin{equation}\label{eq:nu}
 \begin{aligned}
 \|\nu_i\|_{2,1}
 &\le\sum_{g\in D_i}\|\Delta_{i,g}\|
       +\mu^{-1}\sum_{g\in D_i}\|\delta v_{i,g}\|\\
 &\le\sqrt{|D_i|}\{\|\Delta_i\|+\mu^{-1}\|\delta v_i\|\}
 \le Ct_i k_i.
 \end{aligned}
\end{equation}

\noindent\textbf{Step 4: expand the deleted loss gradient.}
For $j\ne i$, set $\xi_{ij}=x_j^\top b_i$ and $\delta_{ij}=x_j^\top\Delta_i$, so $x_j^\top m_i=\xi_{ij}-\delta_{ij}$. Define
\begin{equation}\label{eq:prediction-loss-remainder-definition}
 \begin{aligned}
 w_{ij}
 &=\ell'(y_j;\xi_{ij}-\delta_{ij})-\ell'(y_j;\xi_{ij})+\ell''(y_j;\xi_{ij})\delta_{ij}\\
 &=\delta_{ij}\int_0^1[\ell''(y_j;\xi_{ij})-\ell''(y_j;\xi_{ij}-s\delta_{ij})]\,ds.
 \end{aligned}
\end{equation}
Set $w_i=(w_{ij})_{j\ne i}$. The Lipschitz bound on $\ell''$ gives
\begin{equation}\label{eq:prediction-loss-remainder-bound}
 |w_{ij}|\le\frac{L_3}{2}|\delta_{ij}|^2,
 \qquad
 \|w_i\|_1\le\frac{L_3}{2}\|X_i\Delta_i\|^2
 \le C\|\Delta_i\|^2\le Ct_i^2k_i.
\end{equation}
Because $\delta v_i=\nabla L_i(m_i)-\nabla L_i(b_i)$, the exact vector identity is
\begin{equation}\label{eq:prediction-gradient-decomposition}
 \delta v_i=-H_i\Delta_i+X_i^\top w_i.
\end{equation}

\noindent\textbf{Step 5: invert in the correct order and pair with the held-out row.}
Substitute~\eqref{eq:prediction-gradient-decomposition} into~\eqref{eq:prediction-proximal-decomposition}:
\begin{equation}\label{eq:prediction-resolvent-equation}
 \begin{aligned}
 \Delta_i&=-J_iH_i\Delta_i+J_iX_i^\top w_i+\zeta_i+\nu_i,\\
 (I+J_iH_i)\Delta_i&=J_iX_i^\top w_i+\zeta_i+\nu_i,\\
 \Delta_i&=(I+J_iH_i)^{-1}J_iX_i^\top w_i
           +(I+J_iH_i)^{-1}(\zeta_i+\nu_i)\\
 &=B_iX_i^\top w_i+Q_i^\top(\zeta_i+\nu_i).
 \end{aligned}
\end{equation}
The final equality uses both $B_i=(I+J_iH_i)^{-1}J_i$ and $(I+J_iH_i)^{-1}=Q_i^\top$; it does not commute $H_i$ with $J_i$. Since $B_i=B_i^\top$, pairing gives
\begin{equation}\label{eq:scalarpairing}
 \begin{aligned}
 x_i^\top\Delta_i
 &=x_i^\top B_iX_i^\top w_i+x_i^\top Q_i^\top(\zeta_i+\nu_i)\\
 &=(X_iB_ix_i)^\top w_i+(Q_ix_i)^\top(\zeta_i+\nu_i).
 \end{aligned}
\end{equation}
For block vectors $a,z$, $|a^\top z|\le\sum_g\|a_g\|\|z_g\|\le\|a\|_{2,\infty}\|z\|_{2,1}$. Applying this and scalar $\ell_\infty,\ell_1$ duality to~\eqref{eq:scalarpairing} yields
\begin{equation}\label{eq:prediction-pairing-bound}
 \begin{aligned}
 |x_i^\top\Delta_i|
 &\le\|X_iB_ix_i\|_\infty\|w_i\|_1
   +\|Q_ix_i\|_{2,\infty}
      (\|\zeta_i\|_{2,1}+\|\nu_i\|_{2,1})\\
 &\le Ch_n(t_i^2k_i+t_i^2k_i+t_i k_i)
 \le Ch_nt_i k_i.
 \end{aligned}
\end{equation}
The last inequality uses $t_i\le1$. All remainder pairings are deterministic. In particular, their validity requires no independence between $x_i$ and $D_i,\zeta_i,\nu_i,w_i$.

\noindent\textbf{Step 6: the held-out loss and projector bounds.}
For the quadratic term in the held-out Taylor inequality, use a separate bound rather than squaring~\eqref{eq:prediction-pairing-bound}:
\begin{equation}\label{eq:prediction-square-bound}
 |x_i^\top\Delta_i|^2\le\|x_i\|^2\|\Delta_i\|^2
 \le K_x^2\|\Delta_i\|^2\le Ct_i^2k_i.
\end{equation}
Set $\Delta\ell_i=\ell(y_i;x_i^\top b_i)-\ell(y_i;x_i^\top m_i)$. Taylor's inequality at $x_i^\top b_i$, equation~\eqref{eq:prediction-pairing-bound}, and $S_ih_n=t_i/D_0$ give
\begin{equation}\label{eq:prediction-heldout-loss-detail}
 \begin{aligned}
 |\Delta\ell_i|
 &\le S_i|x_i^\top\Delta_i|+\frac{L_2}{2}|x_i^\top\Delta_i|^2\\
 &\le CS_ih_nt_i k_i+Ct_i^2k_i
 =C D_0^{-1}t_i^2k_i+Ct_i^2k_i
 \le Ct_i^2k_i.
 \end{aligned}
\end{equation}
Combine this with Lemma~\ref{lem:bridge} to prove~\eqref{eq:losslocal}.

The projectors have the block representations
\begin{equation*}
P=\diag\{\ind_{\{g\in S(b)\}}I_{d_g}\}_{g=1}^G,\qquad
P_i=\diag\{\ind_{\{g\in S(b_i)\}}I_{d_g}\}_{g=1}^G.
\end{equation*}
Since $S(b)\triangle S(b_i)\subseteq\mathcal S_i$,
\begin{equation}\label{eq:prediction-projector-detail}
 \|P-P_i\|_F^2
 =\sum_{g\in S(b)\triangle S(b_i)}d_g
 \le d_0|\mathcal S_i|\le Ck_i.
\end{equation}
A singleton sign change can belong to $\mathcal S_i$ without changing its coordinate projector; this is why only an inequality is asserted in the middle step. This proves~\eqref{eq:projectorlocal}.
\end{proof}

\subsection{Proof of Lemma~\ref{lem:antic}}

\begin{proof}
Fix the deletion index $i$ and a deterministic block $g$. The constants below are independent of both indices.

\noindent\textbf{Step 1: clamp the block and identify the conditioning sigma-field.}
Define the clamped estimator by
\begin{equation*}
\breve b_i^{(-g)}
=\operatorname*{arg\,min}_{\beta\in\mathbb R^p:\,\beta_g=0}F_i(\beta),
\end{equation*}
and set
\begin{equation}\label{eq:antic-conditioning-and-coefficients}
 \mathcal F_{i,-g}=\sigma(y_j,x_{j,-g}:j\ne i),\qquad
 a_j=-\ell'(y_j;x_{j,-g}^\top\breve b_{i,-g}^{(-g)}),\qquad
 T_{ig}=\sum_{j\ne i}a_jx_{j,g}.
\end{equation}
On this deterministic clamped domain, the loss uses no column from block $g$. The smaller-sigma-field construction in Lemma~\ref{lem:stability} makes $\breve b_{i,-g}^{(-g)}$ and every coefficient $a_j$ measurable with respect to $\mathcal F_{i,-g}$. The coefficients are therefore fixed when conditioning on this sigma-field.

\noindent\textbf{Step 2: transfer a boundary event to the clamped dual score.}
Fix a deterministic $r>0$. Throughout this step, work on $\{\dist(v_{i,g}^0,\partial K_g)\le r\}$, where $\dist$ denotes Euclidean distance of a point to a set in $\mathbb R^{d_g}$. Since $\partial K_g$ is compact and $\Phi_g=0$ there, proximal Lipschitz continuity gives
\begin{equation}\label{eq:antic-small-primal-block}
 \|b_{i,g}\|=\|\Phi_g(v_{i,g}^0)\|
 \le\mu^{-1}\dist(v_{i,g}^0,\partial K_g)\le r/\mu.
\end{equation}
Define the comparison point and the deleted score vector by
\begin{equation*}
\bar b_h=\begin{cases}
0,&h=g,\\
b_{i,h},&h\ne g,
\end{cases}
\qquad
\mathbf s_i(\beta)=(\ell'(y_j;x_j^\top\beta))_{j\ne i}.
\end{equation*}
Deleted optimality gives
\begin{equation*}
z_i=\frac{v_i^0-\mu b_i}{\lambda_1}\in\partial r(b_i).
\end{equation*}
By separability, $z_{i,h}\in\partial r_h(\bar b_h)$ for every $h\ne g$. Thus the complementary residual of the clamped objective at $b_{i,-g}$ is
\begin{equation}\label{eq:antic-clamped-residual}
 \begin{aligned}
 R_{i,-g}
 &=\nabla_{-g}L_i(\bar b)+\lambda_1z_{i,-g}+\mu b_{i,-g}\\
 &=\nabla_{-g}L_i(\bar b)-\nabla_{-g}L_i(b_i)\\
 &=X_{i,-g}^\top\bigl[\mathbf s_i(\bar b)-\mathbf s_i(b_i)\bigr],\\
 \|R_{i,-g}\|
 &\le L_2\|X_{i,-g}\|_{\op}\|X_{i,g}b_{i,g}\|\\
 &\le L_2\|X_{i,-g}\|_{\op}\|X_{i,g}\|_{\op}\|b_{i,g}\|.
 \end{aligned}
\end{equation}
Strong monotonicity in the free coordinates gives
\begin{equation}\label{eq:antic-clamped-distance}
 \|\breve b_{i,-g}^{(-g)}-b_{i,-g}\|
 \le\mu^{-1}\|R_{i,-g}\|
 \le\frac{L_2\|X_{i,-g}\|_{\op}\|X_{i,g}\|_{\op}}\mu\|b_{i,g}\|.
\end{equation}
On $\mathcal A_i$, both column-restricted design norms are at most $K_X$. Combining~\eqref{eq:antic-small-primal-block} and~\eqref{eq:antic-clamped-distance} yields
\begin{equation}\label{eq:antic-full-clamping-distance}
 \|\breve b_i^{(-g)}-b_i\|
 \le\|b_{i,g}\|+\|\breve b_{i,-g}^{(-g)}-b_{i,-g}\|
 \le Cr.
\end{equation}
Since $T_{ig}=-X_{i,g}^\top\mathbf s_i(\breve b_i^{(-g)})$ and $v_{i,g}^0=-X_{i,g}^\top\mathbf s_i(b_i)$,
\begin{equation}\label{eq:antic-dual-transfer-bound}
 \|T_{ig}-v_{i,g}^0\|
 \le L_2\|X_{i,g}\|_{\op}\|X_i\|_{\op}\|\breve b_i^{(-g)}-b_i\|
 \le Cr \quad\text{on }\mathcal A_i\cap\{\dist(v_{i,g}^0,\partial K_g)\le r\}.
\end{equation}
Since the distance function is 1-Lipschitz, there is a fixed constant $C_{\mathrm{tr}}\ge1$, independent of $i,g,r$, such that
\begin{equation}\label{eq:tubeinclusion}
 \{\dist(v_{i,g}^0,\partial K_g)\le r\}\cap\mathcal A_i
 \subseteq\{\dist(T_{ig},\partial K_g)\le C_{\mathrm{tr}}r\}\cap\mathcal A_i.
\end{equation}

\noindent\textbf{Step 3: use a measurable enlargement and bound coefficient energy.}
Define the enlarged event
\begin{equation}\label{eq:antic-measurable-enlargement}
 \mathcal A_{i,-g}
 =\{\|X_{i,-g}\|_{\op}\le K_X,\ v_-/2\le V_i\le2v_+\}.
\end{equation}
It is $\mathcal F_{i,-g}$-measurable, and $\mathcal A_i\subseteq\mathcal A_{i,-g}$. Apply the score sandwich (Lemma~\ref{lem:stability}) to the clamped fit, with free design $X_{i,-g}$. On $\mathcal A_{i,-g}$,
\begin{equation}\label{eq:antic-coefficient-energy}
 \begin{aligned}
 \sum_{j\ne i}a_j^2
 &\ge\left(\frac\mu{\mu+L_2K_X^2}\right)^2
           \sum_{j\ne i}\eta_j^2
 \ge c(n-1)v_-/2\ge cn,\\
 \sum_{j\ne i}a_j^2
 &\le\left(1+\frac{L_2K_X^2}\mu\right)^2
           \sum_{j\ne i}\eta_j^2
 \le C(n-1)2v_+\le Cn.
 \end{aligned}
\end{equation}
The last lower inequality holds for all sufficiently large $n$. In particular, at least one coefficient is nonzero on this event.

\noindent\textbf{Step 4: construct the conditional row densities almost surely.}
Let $\mathfrak m$ denote the fixed dominating measure for the responses. For nonempty complementary coordinates, define
\begin{equation*}
\mu_{g\mid-g}(z)=\Sigma_{g,-g}\Sigma_{-g,-g}^{-1}z.
\end{equation*}
The conditional law $x_{j,g}\mid x_{j,-g}=z$ is $N(\mu_{g\mid-g}(z),\Omega_g)$; denote its density by $\varphi_g(\,\cdot\mid z)$. The conditional covariance satisfies
\begin{equation}\label{eq:antic-gaussian-conditional-covariance}
 \Omega_g=\Sigma_{gg}-\Sigma_{g,-g}\Sigma_{-g,-g}^{-1}\Sigma_{-g,g},
 \qquad \Omega_g^{-1}=(\Sigma^{-1})_{gg}\preceq(p/c_\Sigma)I.
\end{equation}
If there are no complementary coordinates, use the unconditional Gaussian density and $\Omega_g=\Sigma$.

Define the conditional response density
\begin{equation}\label{eq:antic-response-normalizer}
 h_g(y\mid z)=\int_{\R^{d_g}}
   \varphi_g(u\mid z)q(y\mid z^\top\beta_{-g}^*+u^\top\beta_g^*)\,du.
\end{equation}
Tonelli and normalization of $q$ imply
\begin{equation}\label{eq:antic-normalizer-finiteness}
 \int_{\mathcal Y}h_g(y\mid z)\,\mathfrak m(dy)
 =\int_{\R^{d_g}}\varphi_g(u\mid z)
      \left[\int_{\mathcal Y}q(y\mid z^\top\beta_{-g}^*+u^\top\beta_g^*)\,\mathfrak m(dy)\right]du
 =1.
\end{equation}
Thus $h_g(y\mid z)$ is finite for $\mathfrak m$-almost every $y$. The common-support positivity assumption (Assumption~\ref{ass:likelihood}) makes the integrand strictly positive for almost every response in the specified support, and hence the normalizer is positive there. At the observed pair $(y_j,x_{j,-g})$, it is finite and positive almost surely. Bayes' formula therefore gives the conditional density
\begin{equation}\label{eq:antic-posterior-density}
 f_j(u)=
 \frac{\varphi_g(u\mid x_{j,-g})
       q(y_j\mid x_{j,-g}^\top\beta_{-g}^*+u^\top\beta_g^*)}
      {h_g(y_j\mid x_{j,-g})}.
\end{equation}
The original observation rows are independent, and the conditioning variables $(y_j,x_{j,-g})$ are rowwise. Therefore, for Borel sets $E_j\subseteq\mathbb R^{d_g}$,
\begin{equation*}
\Prob\left(\bigcap_{j\ne i}\{x_{j,g}\in E_j\}\,\middle|\,\mathcal F_{i,-g}\right)
=\prod_{j\ne i}\int_{E_j}f_j(u)\,du
\quad\text{almost surely}.
\end{equation*}
Thus the remaining blocks are conditionally independent with the densities in~\eqref{eq:antic-posterior-density}.

Taking the Hessian in $u$ removes the normalizing factor. The Gaussian part contributes $-\Omega_g^{-1}$, while the likelihood part contributes a scalar curvature times $\beta_g^*(\beta_g^*)^\top$. Consequently
\begin{equation}\label{eq:antic-posterior-curvature}
 \begin{aligned}
 -D^2\log f_j(u)
 &=\Omega_g^{-1}
  -\left[\partial_s^2\log q(y_j\mid s)\right]_{s=x_{j,-g}^\top\beta_{-g}^*+u^\top\beta_g^*}
                       \beta_g^*(\beta_g^*)^\top\\
 &\preceq\Omega_g^{-1}+C_q\beta_g^*(\beta_g^*)^\top\\
 &\preceq\left(p/c_\Sigma+C_q\|\beta_g^*\|^2\right)I
 \preceq CpI.
 \end{aligned}
\end{equation}
The final step uses $\|\beta_g^*\|^2\le\|\beta^*\|^2\le B^2p$ by Assumption~\ref{ass:design}. No upper bound on the likelihood log-curvature and no log-concavity assumption have been used.

\noindent\textbf{Step 5: bound the conditional density of the clamped dual sum.}
Condition on $\mathcal F_{i,-g}$ and define
\begin{equation*}
\mathcal J_{ig}=\{j\ne i:a_j\ne0\},\qquad
A_{ig}^{(2)}=\sum_{j\ne i}a_j^2.
\end{equation*}
On $\mathcal A_{i,-g}$, equation~\eqref{eq:antic-coefficient-energy} gives $\mathcal J_{ig}\ne\varnothing$ and $A_{ig}^{(2)}>0$. For $j\in\mathcal J_{ig}$, the conditional density of $a_jx_{j,g}$ is
\begin{equation*}
\widetilde f_j(u)=|a_j|^{-d_g}f_j(u/a_j),\qquad
-D^2\log\widetilde f_j(u)
=-a_j^{-2}D^2\log f_j(u/a_j)
\preceq\frac{Cp}{a_j^2}I.
\end{equation*}
The formula also applies when $a_j<0$. Choose
\begin{equation}\label{eq:antic-convolution-weights}
\begin{gathered}
\gamma_j=Cp/a_j^2,\qquad
\theta_j=a_j^2/A_{ig}^{(2)},\qquad j\in\mathcal J_{ig},\\
\sum_{j\in\mathcal J_{ig}}\theta_j=1,\qquad
\sum_{j\in\mathcal J_{ig}}\gamma_j\theta_j^2=\frac{Cp}{A_{ig}^{(2)}}.
\end{gathered}
\end{equation}
Lemma~\ref{lem:density} and~\eqref{eq:antic-coefficient-energy} now give, on $\mathcal A_{i,-g}$,
\begin{equation}\label{eq:antic-clamped-density-bound}
 \|f_{T_{ig}\mid\mathcal F_{i,-g}}\|_\infty
 \le\left(\frac{Cp}{2\pi A_{ig}^{(2)}}\right)^{d_g/2}
 \le C.
\end{equation}
The last constant is uniform because $p/n$ and $d_g\le d_0$ are uniformly bounded.

\noindent\textbf{Step 6: apply the tube-volume bound before averaging over blocks.}
Since $K_g=\lambda_1\partial r_g(0)$, the uniform ball bounds on $\partial r_g(0)$, and the positive lower and upper bounds on $\lambda_1$, provide constants $0<r_0\le R_0<\infty$, independent of $g,n,p$, such that
\begin{equation*}
B(0,r_0)\subseteq K_g\subseteq B(0,R_0).
\end{equation*}
Here the balls lie in $\R^{d_g}$, while their radii are uniform across blocks. Set
\begin{equation*}
r_*=\frac{r_0}{2C_{\mathrm{tr}}}>0.
\end{equation*}
For $0<r\le r_*$, we have $C_{\mathrm{tr}}r\le r_0/2<r_0$, so Lemma~\ref{lem:tube} applies with $\varepsilon=C_{\mathrm{tr}}r$. Hence Lemma~\ref{lem:tube} and~\eqref{eq:antic-clamped-density-bound} give
\begin{equation}\label{eq:antic-conditional-tube-probability}
 \Prob\{\dist(T_{ig},\partial K_g)\le C_{\mathrm{tr}}r\mid\mathcal F_{i,-g}\}
 \le\|f_{T_{ig}\mid\mathcal F_{i,-g}}\|_\infty
       \vol\{u:\dist(u,\partial K_g)\le C_{\mathrm{tr}}r\}
 \le Cr
\end{equation}
on $\mathcal A_{i,-g}$. Here $\operatorname{vol}$ denotes the Lebesgue measure in $\R^{d_g}$. Since $\mathcal A_i$ need not be $\mathcal F_{i,-g}$-measurable, we first use~\eqref{eq:tubeinclusion} and the enlargement $\mathcal A_i\subseteq\mathcal A_{i,-g}$, and then condition:
\begin{equation}\label{eq:antic-unconditional-block-bound}
 \begin{aligned}
 &\Prob\{\dist(v_{i,g}^0,\partial K_g)\le r,\mathcal A_i\}\\
 &\quad\le\E\left[\ind_{\mathcal A_{i,-g}}
                   \ind_{\{\dist(T_{ig},\partial K_g)\le C_{\mathrm{tr}}r\}}\right]\\
 &\quad=\E\left[\ind_{\mathcal A_{i,-g}}
       \Prob\{\dist(T_{ig},\partial K_g)\le C_{\mathrm{tr}}r\mid\mathcal F_{i,-g}\}\right]
 \le Cr.
 \end{aligned}
\end{equation}
Finally, $G\le p$ because every block has at least one coordinate. Linearity of expectation yields
\begin{equation}\label{eq:antic-fixed-count-sum}
 \E[N_i(r)\ind_{\mathcal A_i}]
 =\sum_{g=1}^G\Prob\{\dist(v_{i,g}^0,\partial K_g)\le r,\mathcal A_i\}
 \le CGr\le Cpr.
\end{equation}
If $r>r_*$, use $N_i(r)\le p\le r_*^{-1}pr$ instead. After increasing the constant,~\eqref{eq:fixedwidth} holds for every deterministic width $r>0$.
\end{proof}

\subsection{Proof of Lemma~\ref{lem:tail}}

\begin{proof}
\noindent\textbf{Step 1: a conditional envelope for the held-out score.}
The scalar score is $L_2$-Lipschitz, so
\begin{equation}\label{eq:tail-score-decomposition}
 S_i=1+|\ell'(y_i;x_i^\top b_i)|
 \le1+|\eta_i|+L_2|x_i^\top b_i|.
\end{equation}
The held-out pair $(y_i,x_i)$ is independent of $\mathcal D_{-i}=\sigma((y_j,x_j):j\neq i)$. Consequently $\eta_i$ has its original marginal law after conditioning on $\mathcal D_{-i}$, and the conditional law of the predictor is
\begin{equation}\label{eq:tail-predictor-variance}
 x_i^\top b_i\mid\mathcal D_{-i}\sim N(0,\sigma_i^2),
 \qquad \sigma_i^2=b_i^\top\Sigma b_i
 \le\frac{C_\Sigma}{p}\|b_i\|^2\le C
 \quad\text{on }\mathcal A_i.
\end{equation}
The last step uses~\eqref{eq:gaussian-fit-size} and $n/p\le\delta_+$. For $u\ge4$, equation~\eqref{eq:tail-score-decomposition} gives
\begin{equation*}
\{|\eta_i|\le u/4\}\cap\{L_2|x_i^\top b_i|\le u/2\}
\subseteq\{S_i\le1+3u/4\}\subseteq\{S_i\le u\}.
\end{equation*}
Taking complements and applying the union bound, we obtain, on $\mathcal A_i$,
\begin{equation}\label{eq:tail-conditional-union}
 \begin{aligned}
 \Prob(S_i>u\mid\mathcal D_{-i})
 &\le\Prob(|\eta_i|>u/4\mid\mathcal D_{-i})
       +\Prob(L_2|x_i^\top b_i|>u/2\mid\mathcal D_{-i})\\
 &\le\Prob(|\eta_1|>u/4)+2e^{-cu^2}.
 \end{aligned}
\end{equation}
When $L_2=0$, the predictor event is empty and the bound remains valid. Choose a fixed $u_0\ge4$ and define
\begin{equation*}
q_n(u)=\begin{cases}
1,&1\le u\le u_0,\\
\Prob(|\eta_1|>u/4)+2e^{-cu^2},&u>u_0.
\end{cases}
\end{equation*}
The subscript permits the marginal score law to vary with $n$ within the uniform assumptions. This proves the first assertion of~\eqref{eq:conditionaltail}. Only a union bound is needed; the score and predictor within the held-out row may be dependent.

\noindent\textbf{Step 2: conditional third-moment control.}
For nonnegative $a,b,c$, convexity of the cube gives $(a+b+c)^3\le9(a^3+b^3+c^3)$. Therefore
\begin{equation}\label{eq:tail-conditional-third-detail}
 \begin{aligned}
 \E[S_i^3\mid\mathcal D_{-i}]
 &\le9\{1+\E|\eta_i|^3+L_2^3\E[|x_i^\top b_i|^3\mid\mathcal D_{-i}]\}\\
 &=9\{1+\E|\eta_i|^3+L_2^3\sigma_i^3\E|Z|^3\}
 \le C\quad\text{on }\mathcal A_i,
 \qquad Z\sim N(0,1).
 \end{aligned}
\end{equation}
The Gaussian moment is a finite numerical constant. No fourth moment of the null score is used.

\noindent\textbf{Step 3: keep the actual score tail in the dyadic summation.}
For a nonnegative finite random variable $T$ and $c_0>0$, the geometric-series identity gives the pointwise bound
\begin{equation}\label{eq:tail-geometric-pointwise}
 \sum_{k\ge0}2^{3k}\ind_{\{T>c_02^k\}}
 =\sum_{\{k:2^k<T/c_0\}}8^k
 \le\frac87\left(\frac{T}{c_0}\right)^3.
\end{equation}
If the index set is empty, the left side is zero and the inequality still holds. Taking expectations and using Tonelli yields
\begin{equation}\label{eq:tail-geometric-expectation}
 \sum_{k\ge0}2^{3k}\Prob(T>c_02^k)
 \le\frac{8}{7c_0^3}\E T^3.
\end{equation}
Apply this to $T=|\eta_1|$ and $c_0=1/4$. The score part of the dyadic sum is at most a fixed multiple of $K_3$. The finitely many terms with $2^k\le u_0$ contribute a fixed constant. For the Gaussian part,
\begin{equation}\label{eq:tail-gaussian-summability}
 \sum_{k\ge0}2^{3k}e^{-c2^{2k}}<\infty,
\end{equation}
because the ratio of consecutive summands is $8e^{-3c4^k}$ and is eventually at most $1/2$. Combining the three parts gives
\begin{equation*}
\begin{aligned}
\sum_{k\ge0}2^{3k}q_n(2^k)
&\le\sum_{\{k:\,2^k\le u_0\}}2^{3k}
+\frac{8\cdot4^3}{7}\E|\eta_1|^3
+2\sum_{k\ge0}2^{3k}e^{-c2^{2k}}\\
&\le C.
\end{aligned}
\end{equation*}
The constant is uniform in $n$, which proves~\eqref{eq:tailsum}.

By contrast, substituting only the pointwise Markov estimate $q_n(2^k)\le C2^{-3k}$ into the critical sum would give
\begin{equation}\label{eq:tail-markov-borderline}
 \sum_{k\ge0}2^{3k}\,C2^{-3k}=\sum_{k\ge0}C=\infty.
\end{equation}
The argument therefore uses the integrated content of the finite third moment, rather than just its pointwise Markov consequence.
\end{proof}

\subsection{Proof of Lemma~\ref{lem:dyadic}}

\begin{proof}
Fix $a_0>0$ and $q\in\{0,1,2\}$. Define the dyadic events and deterministic widths by
\begin{equation*}
\mathcal H_{ik}=\{2^k\le S_i<2^{k+1}\},\qquad
\varepsilon_k=a_02^{k+1}h_n,\qquad k\ge0.
\end{equation*}
Since $S_i\ge1$, exactly one of the events $\mathcal H_{ik}$ holds. On $\mathcal H_{ik}$, monotonicity of $N_i$ gives
\begin{equation*}
S_i^q\le2^{q(k+1)},\qquad
N_i(a_0S_ih_n)\le N_i(\varepsilon_k).
\end{equation*}
Consequently,
\begin{equation}\label{eq:dyadic-pointwise-decomposition}
 S_i^qN_i(a_0S_ih_n)\ind_{\mathcal A_i}
 \le\sum_{k\ge0}2^{q(k+1)}N_i(\varepsilon_k)
                \ind_{\mathcal A_i}\ind_{\mathcal H_{ik}}.
\end{equation}
Taking expectations, using Tonelli, and dropping the upper endpoint restriction yields
\begin{equation}\label{eq:dyadic-expectation-decomposition}
 \E[S_i^qN_i(a_0S_ih_n)\ind_{\mathcal A_i}]
 \le\sum_{k\ge0}2^{q(k+1)}
  \E[N_i(\varepsilon_k)\ind_{\mathcal A_i}\ind_{\{S_i\ge2^k\}}].
\end{equation}
For $k=0$, bound the last indicator by one and apply~\eqref{eq:fixedwidth}:
\begin{equation}\label{eq:dyadic-first-bin}
 2^q\E[N_i(2a_0h_n)\ind_{\mathcal A_i}]
 \le Cph_n.
\end{equation}
For $k\ge1$, the exact inclusion
\begin{equation}\label{eq:dyadic-atoms-inclusion}
 \{S_i\ge2^k\}\subseteq\{S_i>2^{k-1}\}
\end{equation}
avoids any need to assume that $S_i$ has a continuous distribution. In particular, atoms at $2^k$ cause no problem.

For each fixed $k$, the width $\varepsilon_k$ is deterministic, and both its count and $\mathcal A_i$ are $\mathcal D_{-i}$-measurable. The tower property and~\eqref{eq:conditionaltail} give
\begin{equation}\label{eq:dyadic-conditioning-step}
 \begin{aligned}
 &\E[N_i(\varepsilon_k)\ind_{\mathcal A_i}\ind_{\{S_i\ge2^k\}}]\\
 &\quad=\E\left[N_i(\varepsilon_k)\ind_{\mathcal A_i}
                 \Prob(S_i\ge2^k\mid\mathcal D_{-i})\right]\\
 &\quad\le q_n(2^{k-1})
          \E[N_i(\varepsilon_k)\ind_{\mathcal A_i}]\\
 &\quad\le Cpa_02^{k+1}h_n\,q_n(2^{k-1}).
 \end{aligned}
\end{equation}
The middle step uses the conditional tail bound only where $\mathcal A_i$ holds. It does not factor the score event from the count by independence.

Substituting~\eqref{eq:dyadic-first-bin} and~\eqref{eq:dyadic-conditioning-step} into~\eqref{eq:dyadic-expectation-decomposition} gives
\begin{equation}\label{eq:dyadic-final-sum}
 \begin{aligned}
 \E[S_i^qN_i(a_0S_ih_n)\ind_{\mathcal A_i}]
 &\le Cph_n\left[1+\sum_{k\ge1}2^{(q+1)k}q_n(2^{k-1})\right]\\
 &\le Cph_n\left[1+8\sum_{m\ge0}2^{3m}q_n(2^m)\right]
 \le Cph_n.
 \end{aligned}
\end{equation}
The second inequality uses $q+1\le3$ and $m=k-1$; the last uses~\eqref{eq:tailsum}. The fixed-width estimate is valid even for very large dyadic widths, so no truncation of this sum is needed.
\end{proof}

\subsection{Proof of Lemma~\ref{lem:complements}}

\begin{proof}
Fix $D\in\{A,C\}$ and write $m_i=m_i^D$. Recall the quantities
\begin{equation*}
\begin{gathered}
\Delta_i=b_i-m_i,\qquad
\Delta\ell_i=\ell(y_i;x_i^\top b_i)-\ell(y_i;x_i^\top m_i),\\
W_i=|\Delta\ell_i|+\|\Delta_i\|^2+|x_i^\top\Delta_i|,
\qquad T_n=(D_0h_n)^{-1}.
\end{gathered}
\end{equation*}
For this proof, also set $R_i=\|x_i\|$ and $\xi_i=x_i^\top b_i$. These two scalar quantities may be dependent. Lemma~\ref{lem:global}, \eqref{eq:tail-score-decomposition}, and $S_i\ge1$ give the deterministic envelope
\begin{equation}\label{eq:complements-global-envelope}
 W_i\le CS_i^2(R_i^2+R_i^4),\qquad
 S_i^2\le C(1+\eta_i^2+L_2^2\xi_i^2).
\end{equation}
Expanding the product gives
\begin{equation}\label{eq:complete bound for W}
    W_i\le C\sum_{k\in\{2,4\}}(R_i^k+\eta_i^2R_i^k+L_2^2\xi_i^2R_i^k).
\end{equation}
We treat the three exceptional events separately, because different factors must be controlled on each.

\noindent\textbf{Step 1: an integrable conditional envelope for arbitrary deleted data.}
For $k\in\{2,4\}$, the held-out row $x_i^\top$ is independent of $\mathcal D_{-i}=\sigma((y_j,x_j):j\neq i)$. H\"older with exponents $3/2$ and $3$ therefore gives
\begin{equation}\label{eq:complements-null-score-product}
 \begin{aligned}
 \E[\eta_i^2R_i^k\mid\mathcal D_{-i}]
 &=\E[\eta_i^2R_i^k]\\
 &\le(\E|\eta_i|^3)^{2/3}(\E R_i^{3k})^{1/3}\le C.
 \end{aligned}
\end{equation}
The equality uses independence from the deleted data, not independence of $\eta_i$ and $R_i$. The row moments are uniformly bounded by~\eqref{eq:gaussian-row-moments}.

Conditional on $\mathcal D_{-i}$, $\xi_i$ is centered Gaussian with variance $\sigma_i^2=b_i^\top\Sigma b_i\le C_\Sigma\|b_i\|^2/p$. Cauchy--Schwarz gives
\begin{equation}\label{eq:complements-predictor-product}
 \begin{aligned}
 \E[\xi_i^2R_i^k\mid\mathcal D_{-i}]
 &\le\left(\E[\xi_i^4\mid\mathcal D_{-i}]\right)^{1/2}
       (\E R_i^{2k})^{1/2}\\
 &=\sqrt3\,\sigma_i^2(\E R_i^{2k})^{1/2}
 \le C\frac{\|b_i\|^2}{p}.
 \end{aligned}
\end{equation}
Combining~\eqref{eq:complements-global-envelope}--\eqref{eq:complements-predictor-product} and the unweighted row moments \eqref{eq:gaussian-row-moments} proves
\begin{equation}\label{eq:globalconditional}
 \E[W_i\mid\mathcal D_{-i}]\le C\left(1+\frac{\|b_i\|^2}{p}\right).
\end{equation}
Because $\mathcal A_i^c$ is $\mathcal D_{-i}$-measurable, the tower property and~\eqref{eq:weightedbad} yield
\begin{equation}\label{eq:complements-bad-deleted-detail}
 \begin{aligned}
 \E[W_i\ind_{\mathcal A_i^c}]
 &=\E\left[\ind_{\mathcal A_i^c}\E[W_i\mid\mathcal D_{-i}]\right]\\
 &\le C\E\left[\left(1+\frac{\|b_i\|^2}{p}\right)\ind_{\mathcal A_i^c}\right]
 \le Cn^{-1/2}.
 \end{aligned}
\end{equation}
This is~\eqref{eq:compA}.

\noindent\textbf{Step 2: separate linear-response failure from row-norm failure.}
Define
\begin{equation}\label{eq:complements-gaussian-failure-events}
 \begin{aligned}
 B_i^{\mathrm{row}}&=\{R_i>K_x\},\\
 B_i^{\mathrm{lin}}&=
 \{\|Q_ix_i\|_{2,\infty}+\|X_iB_ix_i\|_\infty>C_Gh_n\}.
 \end{aligned}
\end{equation}
Then $\mathcal G_i^c=B_i^{\mathrm{row}}\cup B_i^{\mathrm{lin}}$. On $(B_i^{\mathrm{row}})^c$, equation~\eqref{eq:complements-global-envelope} simplifies to $W_i\le CS_i^2$. On $\mathcal A_i$, conditional H\"older and the conditional bounds for $S_i^3$ and $B_i^{\mathrm{lin}}$ (Lemma~\ref{lem:tail} and Lemma~\ref{lem:gaussian}) give
\begin{equation}\label{eq:complements-linear-failure-detail}
 \begin{aligned}
 &\E[W_i\ind_{(B_i^{\mathrm{row}})^c\cap B_i^{\mathrm{lin}}}\mid\mathcal D_{-i}]\\
 &\quad\le C\E[S_i^2\ind_{B_i^{\mathrm{lin}}}\mid\mathcal D_{-i}]\\
 &\quad\le C\left(\E[S_i^3\mid\mathcal D_{-i}]\right)^{2/3}
             \Prob(B_i^{\mathrm{lin}}\mid\mathcal D_{-i})^{1/3}\\
 &\quad\le C(n^{-20})^{1/3}\le Cn^{-4}.
 \end{aligned}
\end{equation}
The exponents are $3/2$ on $S_i^2$ and $3$ on the indicator. No conditional independence of the score and failure event is required.

On $B_i^{\mathrm{row}}$, use the three terms in~\eqref{eq:complete bound for W} separately. For each $k\in\{2,4\}$, the restricted row-moment bound~\eqref{eq:gaussian-row-moments} gives
\begin{equation}\label{eq:complements-row-constant-term}
 \E[R_i^k\ind_{B_i^{\mathrm{row}}}\mid\mathcal D_{-i}]
 =\E[R_i^k\ind_{B_i^{\mathrm{row}}}]\le Ce^{-cn}.
\end{equation}
For the null-score part, apply H\"older to the score and the \emph{restricted} row moment \eqref{eq:gaussian-row-moments}:
\begin{equation}\label{eq:complements-row-score-term}
 \begin{aligned}
 \E[\eta_i^2R_i^k\ind_{B_i^{\mathrm{row}}}\mid\mathcal D_{-i}]
 &\le(\E|\eta_i|^3)^{2/3}
      \left(\E[R_i^{3k}\ind_{B_i^{\mathrm{row}}}]\right)^{1/3}\\
 &\le Ce^{-cn}.
 \end{aligned}
\end{equation}
For the predictor part, use conditional Cauchy--Schwarz:
\begin{equation}\label{eq:complements-row-predictor-term}
 \begin{aligned}
 \E[\xi_i^2R_i^k\ind_{B_i^{\mathrm{row}}}\mid\mathcal D_{-i}]
 &\le\left(\E[\xi_i^4\mid\mathcal D_{-i}]\right)^{1/2}
      \left(\E[R_i^{2k}\ind_{B_i^{\mathrm{row}}}]\right)^{1/2}\\
 &\le Ce^{-cn}\quad\text{on }\mathcal A_i.
 \end{aligned}
\end{equation}
Here the conditional Gaussian fourth moment is uniformly bounded on $\mathcal A_i$ by~\eqref{eq:tail-predictor-variance}. Combining these three terms for $k=2,4$ shows
\begin{equation}\label{eq:complements-row-failure-detail}
 \E[W_i\ind_{B_i^{\mathrm{row}}}\mid\mathcal D_{-i}]
 \le Ce^{-cn}\quad\text{on }\mathcal A_i.
\end{equation}
Since $\mathcal A_i$ is $\mathcal D_{-i}$-measurable and
\begin{equation*}
\mathcal G_i^c
=B_i^{\mathrm{row}}\mathbin{\dot\cup}
\bigl((B_i^{\mathrm{row}})^c\cap B_i^{\mathrm{lin}}\bigr),
\end{equation*}
the tower property and the two conditional bounds give
\begin{equation*}
\begin{aligned}
\E[W_i\ind_{\mathcal A_i\cap\mathcal G_i^c}]
&=\E\left[\ind_{\mathcal A_i}
\E[W_i\ind_{\mathcal G_i^c}\mid\mathcal D_{-i}]\right]\\
&\le Cn^{-4}+Ce^{-cn}.
\end{aligned}
\end{equation*}
This proves~\eqref{eq:compG} without requiring a moment bound for the product $S_i^3R_i^6$.

\noindent\textbf{Step 3: a score larger than the local cutoff.}
On $\mathcal G_i$, the row norm is bounded, so $W_i\le CS_i^2$. Since $S_i^2\ind_{\{S_i>T_n\}}\le T_n^{-1}S_i^3$, the conditional third moment gives
\begin{equation}\label{eq:complements-large-score-detail}
 \begin{aligned}
 &\E[W_i\ind_{\mathcal A_i\cap\mathcal G_i\cap\{S_i>T_n\}}]\\
 &\quad\le C\E[S_i^2\ind_{\mathcal A_i}\ind_{\{S_i>T_n\}}]\\
 &\quad\le CT_n^{-1}\E[S_i^3\ind_{\mathcal A_i}]\\
 &\quad=CT_n^{-1}\E[\ind_{\mathcal A_i}\E(S_i^3\mid\mathcal D_{-i})]
 \le CT_n^{-1}=CD_0h_n.
 \end{aligned}
\end{equation}
This proves~\eqref{eq:compS}. For the prediction term alone, the global bound on $\mathcal G_i$ is $|x_i^\top\Delta_i|\le CS_i$. Hence its contribution on the same large-score event is at most
\begin{equation}\label{eq:complements-large-score-prediction}
 C\E[S_i\ind_{\mathcal A_i}\ind_{\{S_i>T_n\}}]
 \le CT_n^{-2}\E[S_i^3\ind_{\mathcal A_i}]\le Ch_n^2.
\end{equation}
This sharper prediction-tail estimate is not needed for the stated common rate.
\end{proof}

\end{document}